\providecommand{\U}[1]{\protect\rule{.1in}{.1in}}
\numberwithin{equation}{section}
\def\blambda{\Lambda}
\def\bomega{\Omega}
\def\vep{\varepsilon}
\def\dbar{\bar\partial}
\theoremstyle{plain}
\newtheorem{thm}{Theorem}[section]%ÉùÃ÷¶¨Àí»·¾³£¬´Ë´¦¡°¶¨Àí¡±°üÀ¨¶¨Àí¶¨ÒåÒýÀí¹«ÀíÃüÌâµÈ¡£
\newtheorem{lemma}[thm]{Lemma}
\newtheorem{prop}[thm]{Proposition}
\newtheorem{cor}[thm]{Corollary}
\theoremstyle{definition}
\newtheorem{defn}[thm]{Definition}
\newtheorem{ex}[thm]{Example}
\newtheorem{notation}[thm]{Notation}
\newcommand{\comment}[1]{}
\begin{document}
%µ÷Óà ÏÔʾÐÐÊýµÄºê°ü
%\linenumbers
\title{$L^2$ extension of $\bar\partial$-closed forms on weakly pseudoconvex K\"ahler manifolds}

\author[Jian Chen]{Jian Chen}
\author[Sheng Rao]{Sheng Rao}

\address{Jian Chen, School of Mathematics and Statistics, Wuhan  University,
Wuhan 430072, People's Republic of China}
\email{jian-chen@whu.edu.cn}

\address{Sheng Rao, School of Mathematics and Statistics, Wuhan  University,
Wuhan 430072, People's Republic of China;
Universit\'{e} de Grenoble-Alpes, Institut Fourier (Math\'{e}matiques)
UMR 5582 du C.N.R.S., 100 rue des Maths, 38610 Gi\`{e}res, France}
\email{likeanyone@whu.edu.cn, sheng.rao@univ-grenoble-alpes.fr}
\dedicatory{One world, one fight}
\thanks{Both authors are partially supported by NSFC (Grant No. 11671305, 11771339, 11922115) and the Fundamental Research Funds for the Central Universities  (Grant No. 2042020kf1065).
}

\date{\today}
%------------Ö÷Ìâ·ÖÀàÐè²éѯ------------\textcolor{red}{need to modify}

%32C10 (1973-1999) Complex manifolds
%32C17 (1991-1999) K?hler geometry
%32C25 (1973-now) Analytic subsets and submanifolds
%32C30 (1973-now) Integration on analytic sets and spaces, currents [For local theory, see 32A25 or 32A27]
%32C35 (1973-now) Analytic sheaves and cohomology groups [See also 14Fxx, 18F20, 55N30]
%32C36 (1980-now) Local cohomology of analytic spaces
%32D15 (1973-now) Continuation of analytic objects
%32E10 (1973-now) Stein spaces, Stein manifolds
%32F05 (1973-1999) Plurisubharmonic functions and generalizations [See also 31C10]
%32J27 (1991-now) Compact K?hler manifolds: generalizations, classification
%32Q15 (2000-now) K?hler manifolds
%\subjclass[2010]{Primary 32L10; Secondary 32Q15}
\subjclass[2010]{Primary 32D15; Secondary 32L10, 32Q15, 32T35}
\keywords{Continuation of analytic objects in several complex variables; Sheaves and cohomology of sections of holomorphic vector bundles, general results,  K\"ahler manifolds, Exhaustion functions}

\begin{abstract}
Combining V. Koziarz's observation about the regularity of some modified  section related to the initial  extension with J. McNeal--D. Varolin's regularity argument, we generalize two theorems of McNeal--Varolin for the $L^2$ extension of $\bar\partial$-closed high-degree forms on a Stein manifold to the weakly pseudoconvex K\"ahler case under mixed positivity conditions.
\end{abstract}
\maketitle

\setcounter{tocdepth}{1}
\tableofcontents

\section{Introduction: main results and applications}\label{intro}
As is well known, the task of the classical Ohsawa--Takegoshi theorem \cite{[OT]} is to
extend a holomorphic object from some lower or same dimensional analytic subvariety to
the ambient space with some $L^2$ estimate involved.   In recent years, problems of extending
holomorphic sections have been treated almost completely in the category of analytic manifolds. So it is natural to ask whether these extensions are feasible for $\bar\partial$-closed forms of high degree, which is a  natural broadening of the classical Ohsawa--Takegoshi--Manivel extension theorem for holomorphic sections of line bundles.

Many interesting works appear along this line, such as \cite{Mn93,Dm00,Koz11,ZGZ12,B12,BPZ15,M-V19,ZZ19}, etc.
The biggest difficulty of this problem is the regularity issue for  solutions of  related $\bar\partial$-equation for  high degree  because $\bar\partial$~operator for high degree is no longer hypoelliptic. For solving the regularity issue, two main methods were adopted: minimizer method and Leray's isomorphism method.

L. Manivel \cite{Mn93} firstly considered this problem, while his proof has difficulty to complete due to the use of a singular weight and the failure of regularity for the solution of the related $\bar\partial$-equation. Then J.-P.  Demailly \cite{Dm00} suggested an approach
to overcome this difficulty, while no one seems to have implemented his  program yet completely. V. Koziarz \cite{Koz11} used the Leray's isomorphism to reduce the extension of high-degree forms to the classical  zero-degree  case and thereby deduced extensions of cohomology classes. B. Berndtsson \cite{B12} applied the minimizer method by solving a $\bar\partial$-equation for a current to get the related extension theorem on compact manifolds. In \cite{M-V19}, J. McNeal--D. Varolin made use of the Kohn solution, to handle the well-known regularity issues on a Stein or essentially Stein manifold (i.e., a K\"ahler manifold that
 becomes Stein after a hypersurface is removed from it). Furthermore, L. Zhu--Q. Guan--X. Zhou \cite{ZGZ12} and L. Baracco--S. Pinton--G. Zampieri \cite{BPZ15} also got some results in some special cases.

It is natural to ask whether we can establish extension theorems  of $\dbar$-closed forms of high degree on general weakly pseudoconvex  K\"ahler manifolds. Recall that a complex manifold is \emph{weakly pseudoconvex} if it admits a smooth plurisubharmonic exhaustion function. As is well known, there is a richer cohomology theory on   weakly pseudoconvex  K\"ahler manifolds than that on Stein manifolds. For example, it is interesting to find
 out the  conditions for a K\"ahler family (the total space of which is a weakly pseudoconvex  K\"ahler manifold, more precisely, a holomorphic convex manifold) to admit the deformation invariance of  higher cohomology of the pluricanonical bundle.  In fact, this question is our initial motivation to study the problem of $L^2$ extension of $\bar\partial$-closed forms on weakly pseudoconvex K\"ahler manifolds.  By the way,  even for a (fiberwise) projective family,   higher cohomology of the pluricanonical bundle may not be  deformation invariant \cite{HL08}.

Combining Koziarz's observation \cite{Koz11} about the regularity of some modified  section related to the initial ambient extension with McNeal--Varolin's regularity argument \cite{M-V19} for the extension of high-degree forms, we generalize the $L^2$ extension theorems (cf. Theorems \ref{mv Ambient} and \ref{mv Intrinsic}) of McNeal--Varolin \cite{M-V19} on a Stein manifold to a weakly pseudoconvex K\"ahler manifold under mixed positivity conditions.

\begin{thm}[Ambient $L^2$ extension]\label{ambient extension}
 On a weakly pseudoconvex $n$-dimensional K\"ahler manifold $(X,\omega)$, let the smooth hypersurface $Y\stackrel{\iota}{\hookrightarrow} X$ be the zero set of a holomorphic section $s\in H^0(X,E)$ of a smooth Hermitian holomorphic line bundle $(E,e^{-\lambda})$ and $(L,e^{-\varphi})$ a smooth Hermitian holomorphic line bundle. Assume that for any $0\leq q \leq n-1$, the inequalities hold on $X$
\begin{eqnarray}
&&\sqrt{-1}\partial\bar\partial(\varphi-\lambda)\wedge\omega^q\geq \sigma\omega\wedge\omega^q \label{assone},\\
&&\sqrt{-1}\partial\bar\partial(\varphi-(1+\delta)\lambda)\wedge\omega^q\geq 0 ,\label{asstwo}\\
&&|s|^2e^{-\lambda}\leq 1, \label{assthree}
\end{eqnarray}
where $\sigma$ is some positive \footnote{In  \cite[Theorem 1.1]{M-V19}, the $\sigma$ is taken as zero. So the theorem here cannot cover \cite[Theorem 1.1]{M-V19} completely. The error term method of Demailly \cite{Dm00} of solving $\dbar$-equations is always used to overcome the lack of sufficient positivity for the extension problems of holomorphic objects. It seems interesting to  improve the condition \eqref{assone} to be the semi-positive one like \cite[Theorem 1.1]{M-V19}.}
%(However, this method can not work when dealing with extension problems of $\dbar$-closed forms mainly due to the lack of ellipticity of $\dbar$ operator.)
lower semi-continuous function  and $0<\delta\leq 1$ is some constant.
Then there is a universal constant $C > 0$ such that for any smooth
section $f$ of the bundle  $(K_X\otimes L\otimes \Lambda^{0,q}T^\star_X)|_Y\rightarrow Y$,  satisfying
\[
\bar\partial(\iota^*f)=0 \quad and \quad \int_Y\frac{|f|^2_{\omega} e^{-\varphi}}{|ds|^2_{\omega}e^{-\lambda}}dV_{Y,\omega}<\infty,
\]
there exists a smooth $\bar\partial$-closed $K_X\otimes L$-valued $(0, q)$-form $F$ on $X$  with
$$F|_Y=f \quad and \quad \int_X|F|^2_\omega e^{-\varphi}dV_{X,\omega} \leq \frac{C}{\delta}\int_Y\frac{|f|^2_{\omega} e^{-\varphi}}{|ds|^2_{\omega}e^{-\lambda}}dV_{Y,\omega}<\infty. $$
Here we denote by $\Lambda^{r,s}T^\star_X$ the bundle of differential forms of bidegree $(r,s)$ on $X$ and similarly for others.
\end{thm}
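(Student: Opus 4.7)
The plan is to implement the twisted $\bar\partial$ strategy of Ohsawa--Takegoshi--Manivel type, combined with exhaustion and a passage to the limit. Using the smooth plurisubharmonic exhaustion $\rho$ of $X$, write $X = \bigcup_c X_c$ with $X_c := \{\rho < c\}$; after a Richberg-type smoothing, I may take each $X_c$ to be a smooth strongly pseudoconvex relatively compact K\"ahler submanifold. On each $X_c$ I first produce a smooth $K_X\otimes L$-valued $(0,q)$-form $\tilde F$ extending $f$, by using a tubular neighborhood of $Y\cap X_c$ together with a cutoff supported near $\{s=0\}$; by construction $\bar\partial \tilde F$ vanishes along $Y$. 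The next step is to solve
\[
\bar\partial u = \bar\partial \tilde F \quad \text{on } X_c
\]
in $L^2$ with a singular weight involving $\log|s|^2$, chosen so that finiteness of the weighted $L^2$-norm forces $u|_Y = 0$; the candidate extension on $X_c$ is $F_c := \tilde F - u$.

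The $L^2$-bound is obtained via the twisted Bochner--Kodaira--Nakano identity, in the form used by McNeal--Varolin \cite{M-V19}. The strict curvature hypothesis \eqref{assone} provides the coercivity needed to invert the twisted $\bar\partial$-Laplacian on $(0,q)$-forms for all $0\leq q\leq n-1$; the auxiliary positivity \eqref{asstwo} lets one absorb the singular contribution of $\bar\partial\log|s|^2$ by splitting the weight as $\varphi-\lambda = \bigl(\varphi - (1+\delta)\lambda\bigr) + \delta\lambda$, producing the factor $C/\delta$ in the final bound; the normalization \eqref{assthree} ensures that the singular weight is well-defined and controls the boundary contributions. The resulting inequality
\[
\int_{X_c} |u|^2_\omega e^{-\varphi}\, dV_{X,\omega} \leq \frac{C}{\delta}\int_Y \frac{|f|^2_\omega e^{-\varphi}}{|ds|^2_\omega e^{-\lambda}}\, dV_{Y,\omega}
\]
has constant $C$ independent of $c$.

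The main obstacle is the regularity of $F_c$. For $q\geq 1$ the operator $\bar\partial$ is not hypoelliptic on $(0,q)$-forms, so the $L^2$-minimal $u$ is a priori only in $L^2$, and its trace on $Y$ is not directly accessible. Here I invoke Koziarz's observation \cite{Koz11}: by modifying the initial ambient extension $\tilde F$ (adding a correction built from the section $s$ and an auxiliary smooth extension), the right-hand side of the $\bar\partial$-equation acquires the extra vanishing along $Y$ and smoothness up to $\partial X_c$ required to conclude; combined with the regularity argument of McNeal--Varolin \cite{M-V19}, which uses Kohn's solution on the smooth strongly pseudoconvex $X_c$, this upgrades $u$ to a smooth section vanishing on $Y$, so that $F_c$ is smooth on $X_c$. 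Finally, the uniform estimate together with local elliptic regularity for $\bar\partial$-closed forms and a standard diagonal extraction yield a smooth $\bar\partial$-closed limit $F$ on $X$ with $F|_Y = f$; the global $L^2$-estimate then follows by Fatou's lemma.
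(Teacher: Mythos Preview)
Your overall strategy is close to the paper's, but there are two genuine gaps that make the argument, as written, not go through on a general weakly pseudoconvex K\"ahler manifold.

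\textbf{The sublevel sets are not strongly pseudoconvex.} You write that after a Richberg-type smoothing the $X_c$ may be taken smooth strongly pseudoconvex, and then invoke ``Kohn's solution on the smooth strongly pseudoconvex $X_c$''. This is the crux, and it is not available here. Richberg regularization turns a continuous \emph{strictly} psh function into a smooth one; on a weakly pseudoconvex manifold the exhaustion is only plurisubharmonic, and in general you cannot upgrade it to strictly psh (if you could, $X$ would be Stein and the theorem would follow directly from \cite{M-V19}). In the paper's own Example~\ref{ex1}, $X=\mathbb{B}^m\times Y$ with $Y$ compact K\"ahler, the sublevel sets are $\{|z|<c\}\times Y$, whose boundary is Levi-flat in the $Y$ directions; there is no strongly pseudoconvex exhaustion, and the $\bar\partial$-Neumann/Kohn regularity machinery is not applicable. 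The paper accordingly works on sublevel sets $\Omega_j$ with merely \emph{pseudoconvex} smooth boundary, and the twisted basic estimate (Lemma~\ref{basic estimate}) only uses this weaker hypothesis. The interior regularity of the solution $V_{\varepsilon,j}$ of $\square V_{\varepsilon,j}=g_\varepsilon$ then comes from interior ellipticity of the twisted Laplacian, not from boundary regularity of the Neumann problem, and gives only $V_{\varepsilon,j}\in\mathcal{E}^{k_1+1}$ because (unlike in the Stein case) $g_\varepsilon$ is only $\mathcal{E}^k$.

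\textbf{The minimization step is missing, and your limit argument relies on a false ellipticity.} You conclude by invoking ``local elliptic regularity for $\bar\partial$-closed forms and a standard diagonal extraction''. For $q\geq 1$ the operator $\bar\partial$ is not hypoelliptic, so $\bar\partial$-closedness alone yields neither smoothness nor local uniform $\mathscr{C}^1$ bounds; the paper explicitly notes that there is no Montel-type compactness here. What the paper does instead---and what your sketch lacks---is a minimization step (Section~3.5): starting from the $\mathcal{E}^{k_1}$ extension $u_j$ on $\Omega_j$, one replaces it by the element $U_j$ of minimal $L^2(\omega,e^{-\varphi})$-norm in the affine ball $u_j+s\mathscr{B}^2_q(\Omega_j)$. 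Minimality forces $U_j\perp s\mathscr{B}^2_q(\Omega_j)$, hence $\bar\partial^*U_j=0$ in the sense of currents, so $\square_0 U_j=0$ for the \emph{untwisted} Laplacian. It is this harmonicity that gives smoothness of $U_j$, that propagates to the weak limit $U$, and that---via interior elliptic estimates for $\square_0$---supplies the uniform local $\mathscr{C}^1$ bounds needed to pass to the limit in the current equations \eqref{current1new}--\eqref{current2new} characterizing the ambient extension. Without this step, you have neither a smooth $F_c$ nor a mechanism to show the weak limit restricts to $f$ on $Y$.
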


Note that since $(K_X\otimes L\otimes \Lambda^{0,q}T^\star_X)|_Y$ does not admit a natural notion of $\bar\partial$, the above $\iota^*f$  is not the usual pullback of differential forms, but induced as Definition \ref{pullback}. And the \emph{positivity conditions} \eqref{assone} \eqref{asstwo} hold \emph{in the sense of $(q+1,q+1)$-forms} as follows:  a real $(1,1)$-form $\theta$ on $X$ satisfies the positivities on $X$
\begin{equation}\label{pos}
\theta\wedge\omega^q> (\text{resp. $\geq$})\ 0
\end{equation}
if and only if $$\lambda_{1}+\cdots+\lambda_{q+1}> (\text{resp. $\geq$})\ 0,$$
where $\lambda_{1} \leqslant \cdots \leqslant \lambda_{n}$ are the eigenvalues of $\theta$ with respect to $\omega$ at any point $x\in X$. Moreover, they are both equivalent to  that
$\langle[\theta,\blambda_{\omega}] \beta, \beta\rangle_{\omega}$ is positive (resp. semipositive) at any point $x\in X$ for any  $(n, q+1)$-form $\beta(x)\neq 0$ (see Lemmata \ref{eigen} and \ref{less than q eigen} for a better understanding).

%For the case of semi-positivity conditions, the error term method of Demailly \cite{Dm00} of solving $\dbar$-equations is always used to overcome the lack of sufficient positivity. Unfortunately, we find no good methods to combine   the regularity argument of McNeal--Varolin  with the error term method to get a result under the semi-positivity conditions. The strict positivity condition \eqref{assone} is mainly applied to force the Dirichlet semi-norm to be a norm on some relatively compact domains of the weakly pseudoconvex  K\"ahler manifold and to control well  the estimate \eqref{second part}.
%However, since there are a variety of vanishing theorems, Theorem \ref{ambient extension} may not tell us anything valuable if one considers the extension problem of cohomology as $L$ is very highly positively curved.

Theorem \ref{ambient extension} is clearly different from  \cite[Theorem 3.1]{B12} which is in an intrinsic sense (see Definition \ref{Two} for this notion) by the adjunction formula.   We give an example satisfying the conditions of Theorem \ref{ambient extension}, but not in the setting of  \cite[Theorem 1]{M-V19} or the essentially Stein case of \cite{M-V19} (cf. \cite[p. 425]{M-V19}).

\begin{ex}\label{ex1}
Let $\left(\mathbb{B}^{m}, \omega_{1}\right)$ be the unit ball in $\mathbb{C}^{m}$ equipped with the Euclidean metric $\omega_{1}=\sqrt{-1}\partial\dbar(z_1^2+\cdots+z_m^2)$, and $\left(Y, \omega_{2}\right)$ a $k$-dimensional compact K\"ahler manifold which does  not have any closed complex hypersurfaces.   By the heredity property of weakly  pseudoconvexity,  $X:=\mathbb{B}^{m} \times Y$ is a weakly pseudoconvex K\"ahler manifold equipped with the natural K\"ahler metric $\omega:=\pi_1^*\omega_{1}+\pi_2^*\omega_{2}$. As $X$ admits a compact submanifold $Y$ which contains no hypersurfaces, it is neither a Stein nor an  essentially Stein manifold. Apparently, $X$ is not a compact manifold, either.
Let
$$L=E=\mathcal{O}_X, \quad s=z_1, \quad \varphi=|z|^2, \quad \sigma=\frac{q+1-k}{2(q+1)}, \quad and \quad \lambda\equiv 0,$$
where $z_1$ is the first global coordinate function on $\mathbb{B}^{m}$, $|z|^2:=z_1^2+\cdots+z_m^2$.
Then the above setting satisfies \eqref{assone}, \eqref{asstwo} and \eqref{assthree} as $q\geq k$.

 Varolin suggested that one can take  $Y$ as a generic torus of dimension $\geq 2$.
In fact, a torus admits no divisors  if and only if it has algebraic dimension zero, i.e., the only meromorphic functions are constant
(e.g.,  \cite[p.\ 31]{EF82}). Of course,  any compact complex manifold with algebraic dimension zero admits no divisors, since $H^{0}\left(X, \mathcal{K}_{X}^{*} / \mathcal{O}_{X}^{*}\right) \cong \operatorname{Div}(X)$.  On the other hand,
for a very general lattice $\Gamma \subset \mathbb{C}^{n}$ the meromorphic function field $\mathcal{K}\left(\mathbb{C}^{n} / \Gamma\right)$ is trivial (e.g.,  \cite[p.\ 58]{Huy05}). By the way,  one can also take  $Y$ as any simple manifold (e.g.,  $K3$ surfaces or the general member of the deformation families
of hyperk\"ahler manifolds), since the  algebraic dimension of a simple manifold always vanishes (e.g.,  \cite[p. 132]{CDV14}).
\qed
\end{ex}

Just as \cite[p. 423]{M-V19}, if $\eta$ is an $L$-valued $(0, q)$-form on $Y,$ the orthogonal projection $$P: T_{X}^{0,1} |_Y \rightarrow T_{Y}^{0,1}$$ induced by the K\"ahler metric $\omega$ maps $\eta$ to the ambient $L$-valued $(0, q)$-form $P^{*} \eta$, given by
$$\left\langle P^{*} \eta, \bar{v}_{1} \wedge \cdots \wedge \bar{v}_{q}\right\rangle:=\left\langle\eta,\left(P \bar{v}_{1}\right) \wedge \cdots \wedge\left(P \bar{v}_{q}\right)\right\rangle$$
in $L_{y}$
for all $y \in Y\stackrel{\iota}{\hookrightarrow} X$ and $v_{1}, \ldots, v_{q} \in T_{X, y}^{ 1,0}$. Around $y$, we choose a local coordinate and frame $(U,  \{z_1,\ldots, z_n\},\sigma)$ such that
$Z\cap U=\{z_1=0\}$ and $\{d\bar{z}_1, d\bar{z}_2,\cdots,d\bar{z}_n\}$ is an $\omega(y)$-orthonormal basis of $\wedge^{0,1}T^{\star}_{X,y}$. At $y$,
set $\eta=\sum_{1\notin J}a_J d\bar{z}_J\circ \iota\otimes\sigma$ and then $P^{*}\eta=\sum_{1\notin J}a_J d\bar{z}_J\otimes\sigma$. So  $P^{*}$  is an isometry for the pointwise norm of $L$-valued $(0, q)$-forms induced by $\omega$ and the metric of $L$. As $\iota^{*} P^{*} \eta=\eta,$ we can apply Theorem  \ref{ambient extension} to $f=P^{*} u$ and obtain Theorem \ref{intrinsic extension}, while a sketch of a direct proof for it is also given in Appendix \ref{app}.
\begin{thm}[Intrinsic $L^2$ extension]\label{intrinsic extension}
With the setting of Theorem \ref{ambient extension}, there is a universal constant $C > 0$ such that for any smooth
$\bar\partial$-closed $(K_X\otimes L)|_Y$-valued $(0, q)$-form $u$ on $Y$ satisfying
\[
\int_Y\frac{|u|^2_{\omega} e^{-\varphi}}{|ds|^2_{\omega}e^{-\lambda}}dV_{Y,\omega}<\infty,
\]
  there exists a smooth $\bar\partial$-closed $K_X\otimes L$-valued $(0, q)$-form $U$ on $X$ with
$$\iota^* U=u \quad \text{and} \quad \int_X|U|^2_\omega e^{-\varphi}dV_{X,\omega} \leq \frac{C}{\delta}\int_Y\frac{|u|^2_{\omega} e^{-\varphi}}{|ds|^2_{\omega}e^{-\lambda}}dV_{Y,\omega}<\infty .$$
\end{thm}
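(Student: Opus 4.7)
The plan is to deduce Theorem~\ref{intrinsic extension} from Theorem~\ref{ambient extension} by lifting the intrinsic datum $u$ to an ambient one via the adjoint $P^{*}$ of the orthogonal projection $P:T_{X}^{0,1}|_{Y}\to T_{Y}^{0,1}$ introduced just before the statement. Setting $f:=P^{*}u$, a smooth section of $(K_X\otimes L\otimes \Lambda^{0,q}T^\star_X)|_Y\to Y$, I would feed $f$ into Theorem~\ref{ambient extension} and take the resulting ambient extension as the form $U$ sought in Theorem~\ref{intrinsic extension}.

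First I would verify that $f=P^{*}u$ meets the hypotheses of Theorem~\ref{ambient extension}. Because the construction of $P^{*}$ yields $\iota^{*}\circ P^{*}=\mathrm{id}$ on intrinsic $L$-valued $(0,q)$-forms, one has $\iota^{*}f=u$, so $\bar\partial(\iota^{*}f)=\bar\partial u=0$ by assumption. For the $L^{2}$ hypothesis, the local coordinate calculation outlined immediately before the statement (working in an $\omega(y)$-orthonormal coframe $\{d\bar z_{1},\dots,d\bar z_{n}\}$ adapted to $Y=\{z_{1}=0\}$) shows that $P^{*}$ is a pointwise isometry for the norms induced by $\omega$ and the metric on $L$, so $|f|_{\omega}\equiv|u|_{\omega}$ along $Y$ and the two weighted integrals agree.

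Next I would apply Theorem~\ref{ambient extension} to $f$ and obtain a smooth $\bar\partial$-closed $K_X\otimes L$-valued $(0,q)$-form $F$ on $X$ with $F|_{Y}=f$ and
$$
\int_X|F|^2_\omega e^{-\varphi}dV_{X,\omega}\leq \frac{C}{\delta}\int_Y\frac{|f|^2_{\omega}e^{-\varphi}}{|ds|^2_{\omega}e^{-\lambda}}dV_{Y,\omega}.
$$
Setting $U:=F$ and applying the intrinsic pullback $\iota^{*}$ (in the sense of Definition~\ref{pullback}) to the identity $F|_{Y}=f$, one concludes $\iota^{*}U=\iota^{*}f=\iota^{*}P^{*}u=u$, and the estimate required for $U$ then follows from the pointwise identity $|f|_{\omega}=|u|_{\omega}$ established in the previous step.

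The only substantive point is conceptual rather than analytic: one must take care that the two notions of restriction, the section-restriction $F\mapsto F|_{Y}$ landing in $(K_X\otimes L\otimes \Lambda^{0,q}T^\star_X)|_Y$ and the intrinsic pullback $\iota^{*}$ landing in $(K_X\otimes L)|_Y$-valued $(0,q)$-forms on $Y$, are matched by the relation $\iota^{*}\circ P^{*}=\mathrm{id}$; once that compatibility is in place, the entire analytic difficulty has been absorbed into Theorem~\ref{ambient extension}. Alternatively, one can bypass $P^{*}$ and run the twisted Bochner--Kodaira/Kohn regularity argument used for Theorem~\ref{ambient extension} directly on intrinsic forms, as sketched in Appendix~\ref{app}, but this appears strictly more laborious than the $P^{*}$-reduction above.
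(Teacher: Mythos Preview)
Your proposal is correct and is essentially identical to the paper's proof: the paper defines $f:=P^{*}u$, uses the local coordinate argument to verify that $P^{*}$ is a pointwise isometry and that $\iota^{*}P^{*}u=u$, then applies Theorem~\ref{ambient extension} and reads off the intrinsic extension from $\iota^{*}F=\iota^{*}P^{*}u=u$. Your remark about the alternative direct argument also matches what the paper does in Appendix~\ref{app}.
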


We now present two applications of the main theorems. The first one is a  surjectivity theorem for the restriction maps in Dolbeault cohomology. It is similar to the extension theorems for cohomology
classes (without $L^2$ estimate) recently obtained by Cao--Demailly--Matsumura \cite[Theorem 1.1]{CDM17} and Zhou--Zhu  \cite[Theorem 1.1, Remark 1.1]{ZZ19}   on a holomorphically convex manifold with the more general curvature conditions, respectively, while our method is rather different from theirs.
\begin{cor}\label{restr}
 Let $X, Y, E, L$ be as in  Theorem \ref{intrinsic extension} and also $Y$  compact. Then the restriction morphism
\[
H^{0,q}\left(X, K_{X} \otimes L\right) \longrightarrow H^{0,q}\left(Y,\left(K_{X} \otimes L\right)_{| Y}\right)
\]
is surjective for any $0\leq q \leq n-1$.
\end{cor}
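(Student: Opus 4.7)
The plan is to deduce the corollary directly from Theorem~\ref{intrinsic extension}. Given any Dolbeault class in $H^{0,q}(Y, (K_X \otimes L)|_Y)$, I would represent it by a smooth $\bar\partial$-closed $(K_X \otimes L)|_Y$-valued $(0,q)$-form $u$ on $Y$, check the weighted $L^2$ hypothesis of Theorem~\ref{intrinsic extension}, invoke that theorem to obtain a smooth $\bar\partial$-closed ambient form $U$ on $X$ with $\iota^* U = u$, and then pass to Dolbeault cohomology to conclude that the class $[U] \in H^{0,q}(X, K_X \otimes L)$ maps to $[u]$ under the restriction morphism. Since the restriction morphism on cohomology is defined via $\iota^*$ on representatives, this pointwise lift immediately upgrades to the cohomological statement.

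The only hypothesis of Theorem~\ref{intrinsic extension} that is not already built into the setup of the corollary is the finiteness of
\[
\int_Y \frac{|u|_{\omega}^2 \, e^{-\varphi}}{|ds|_{\omega}^2 \, e^{-\lambda}} \, dV_{Y,\omega}.
\]
At this step I would use the compactness of $Y$: the smooth form $u$, the K\"ahler metric $\omega$, and the smooth weights $e^{-\varphi}$ and $e^{-\lambda}$ are all bounded on $Y$, so the numerator of the integrand is bounded. Because $Y$ is cut out smoothly as the zero locus of $s$, the differential $ds$ vanishes nowhere along $Y$, so $|ds|_{\omega}^2$ is continuous and strictly positive on $Y$ and is therefore bounded below by a positive constant; likewise $e^{-\lambda}$ is bounded below on $Y$. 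Consequently the integrand is continuous on the compact manifold $Y$, and the integral is finite.

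With the hypothesis verified, Theorem~\ref{intrinsic extension} produces a smooth $\bar\partial$-closed $K_X \otimes L$-valued $(0,q)$-form $U$ on $X$ with $\iota^* U = u$, whose Dolbeault class restricts to $[u]$, which is the desired surjectivity. Honestly, there is no substantive obstacle in this argument beyond the compactness check; all of the analytic difficulty—weighted $L^2$ estimates, the regularity issues for high-degree $\bar\partial$, and the mixed positivity conditions—has already been absorbed into the proof of the intrinsic extension theorem. The only thing one should be careful about is that the representative $u$ of a given Dolbeault class is smooth and $\bar\partial$-closed (so that Theorem~\ref{intrinsic extension} applies verbatim), which is automatic since Dolbeault cohomology is computed by smooth forms.
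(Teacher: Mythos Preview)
Your proof is correct and matches the paper's intended argument: the paper states Corollary~\ref{restr} without proof, treating it as an immediate consequence of Theorem~\ref{intrinsic extension}, and your verification that the weighted $L^2$ integral is finite by compactness of $Y$ (continuity of the integrand together with the nonvanishing of $ds$ along the smooth hypersurface $Y$) is exactly the routine check that makes this work.
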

%By the way, in the language of sheaf theory, Corollary \ref{restr} tells us that the homomorphism
%\[
%H^{q}\left(X, \mathcal{O}_{X}\left(K_{X} \otimes L\right)\right) \rightarrow H^{q}\left(Y, \mathcal{O}_{Y}\left(K_{X} \otimes L\right)\right)\cong H^{q}\left(X, \mathcal{O}_{X}\left(K_{X} \otimes L\right) \otimes \mathcal{O}_{X} / \mathcal{J}\right),
%\]
%where $\mathcal{J}$ is the ideal sheaf of $Y$,
%is surjective by Leray's isomorphism. The surjectivity is of course equivalent to the injectivity of the homomorphism
%\[
%H^{q+1}\left(X, \mathcal{O}_{X}\left(K_{X} \otimes L\right) \otimes \mathcal{J}\right) \rightarrow H^{q+1}\left(X, \mathcal{O}_{X}\left(K_{X} \otimes L\right)\right),
%\]
%for any fixed $0\leq q \leq n-1$.
%Note that Corollary \ref{restr} is trivial when  $L$ is highly positively curved while it may still satisfy the setting of
%Theorem \ref{intrinsic extension}.

Much inspired by \cite[Corollary 4.11]{Dm00}, we consider the extension behavior of $\dbar$-closed forms on bounded pseudoconvex domains as the second application of the main theorems. It tells us some information about the relationship between  smooth ``generalized quasi-plurisubharmonic" functions and $\dbar$-closed forms.
\begin{cor}\label{cor2}
Let $\Omega \subset \mathbb{C}^{n}$ be a bounded  pseudoconvex domain, and $Y \subset \Omega$ a smooth hypersurface defined by a section $s$ of some Hermitian holomorphic line bundle $(E,e^{-\lambda})$ over $\Omega$.  Assume that $s$ is everywhere transverse to the zero section and that $|s| \leqslant 1$ on $\Omega$. Let $\varphi$ be an any smooth function such that for $0\leq q \leq n-1$, some
 $\delta\in (0,1]$ and the Chern curvature form $\Theta_E$ of $(E,e^{-\lambda})$, $\sqrt{-1}\partial\dbar\varphi-\Theta_E$ and $\sqrt{-1}\partial\dbar\varphi-(1+\delta)\Theta_E$ are below-bounded in the sense of \eqref{pos} on $\Omega$,
 i.e., the sums of their smallest $q+1$ eigenvalues with respect to the standard complex Euclidean metric are below bounded on $\Omega$. Then there is a constant
$C>0$ (depending  on $\Omega$ and the ``lower bound" of  $\sqrt{-1}\partial\dbar\varphi-\Theta_E$ and $\sqrt{-1}\partial\dbar\varphi-(1+\delta)\Theta_E$
on $\Omega$  in the sense of \eqref{pos}), with the following property: for any   $\dbar$-closed $(n,q)$-form or $(0,q)$-form $f$ on $Y$ with
$$\int_{Y}|f|^{2}\left|d s\right|^{-2} e^{-\varphi} d V_{Y}<+\infty,$$
 there exists a $\dbar$-closed  extension $F$ of $f$ to $\Omega$ with
\[
\int_{\Omega} |F|^{2}e^{-\varphi} d V_{\Omega} \leqslant C \int_{Y} \frac{|f|^{2}}{\left|d s\right|^{2}} e^{-\varphi} d V_{Y}.
\]
\end{cor}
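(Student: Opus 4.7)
The plan is to reduce directly to Theorem~\ref{intrinsic extension} by Demailly's trick (cf.~\cite[Corollary 4.11]{Dm00}): add $A|z|^2$ to the weight $\varphi$ for $A>0$ large, so as to upgrade the below-boundedness hypothesis into the strict positivity required by \eqref{assone}--\eqref{asstwo}. I equip $X := \Omega$ with the Euclidean K\"ahler metric $\omega = \sqrt{-1}\sum_{j=1}^{n} dz_j \wedge d\bar z_j$, and set $\varphi_A := \varphi + A|z|^2$. Then $(X,\omega)$ is weakly pseudoconvex K\"ahler, and the identity $\sqrt{-1}\partial\bar\partial |z|^2 = \omega$ gives
\[
\sqrt{-1}\partial\bar\partial(\varphi_A - \lambda) = \bigl(\sqrt{-1}\partial\bar\partial\varphi - \Theta_E\bigr) + A\omega,
\]
so the sum of the smallest $q+1$ eigenvalues with respect to $\omega$ is raised by exactly $(q+1)A$. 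The below-boundedness hypothesis on $\sqrt{-1}\partial\bar\partial\varphi - \Theta_E$ therefore forces this sum to exceed $(q+1)\sigma_0$ for a fixed $\sigma_0 > 0$ once $A$ is chosen large enough, verifying \eqref{assone}; the same argument applied to $\sqrt{-1}\partial\bar\partial\varphi - (1+\delta)\Theta_E$ yields \eqref{asstwo}, and \eqref{assthree} is just the assumption $|s| \leq 1$.

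Next I invoke Theorem~\ref{intrinsic extension} in this setup. For an $(n,q)$-form $f$ on $Y$ I take $L := \mathcal{O}_\Omega$ with weight $e^{-\varphi_A}$, so that $K_X \otimes L = K_\Omega$ and $f$ is already a $(K_X \otimes L)|_Y$-valued $(0,q)$-form. For a $(0,q)$-form $f$ on $Y$ I take $L := K_\Omega^{-1}$ trivialised by $\Omega_0^{-1}$ with $\Omega_0 := dz_1 \wedge \cdots \wedge dz_n$ and weight $e^{-\varphi_A}$, so that $K_X \otimes L \cong \mathcal{O}_\Omega$ and $f$ again corresponds to a $(K_X \otimes L)|_Y$-valued $(0,q)$-form. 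In both cases the trivialisation is holomorphic, so $\bar\partial$-closedness and the intrinsic restriction $\iota^*$ transfer cleanly, and pointwise Euclidean norms agree up to a dimensional constant. Theorem~\ref{intrinsic extension} then produces a $\bar\partial$-closed extension $F$ on $\Omega$ with
\[
\int_\Omega |F|^2 e^{-\varphi_A}\, dV_\Omega \ \leq\ \frac{C_0}{\delta} \int_Y \frac{|f|^2\, e^{-\varphi_A}}{|ds|^2\, e^{-\lambda}}\, dV_Y.
\]

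Finally, since $\Omega$ is bounded there exists $R < \infty$ with $|z|^2 \leq R$ on $\Omega$, so $e^{-AR} \leq e^{-A|z|^2} \leq 1$, and removing the perturbation from both sides of the estimate costs at most the bounded factor $e^{AR}$. Absorbing $\tfrac{1}{\delta}$, the dimensional constants from the trivialisation, and $e^{AR}$ into a single $C$ (depending on $\Omega$, $\delta$, and the lower bounds in the hypothesis) gives exactly the estimate claimed. The only genuine obstacle is the first step — the upgrade from below-boundedness to strict positivity — which is forced by the weaker hypothesis of the corollary; the boundedness of $\Omega$ is precisely what makes Demailly's perturbation cheap, and everything after is routine bookkeeping.
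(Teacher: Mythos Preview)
Your proposal is correct and follows essentially the same route as the paper: equip the trivial line bundle with the perturbed weight $e^{-\varphi-A|z|^2}$, choose $A$ large enough to convert the below-boundedness into the positivity required by \eqref{assone}--\eqref{asstwo}, apply Theorem~\ref{intrinsic extension}, and then use boundedness of $\Omega$ to strip off $e^{-A|z|^2}$ at the cost of a constant. The only cosmetic difference is in the $(0,q)$ case: the paper wedges $f$ with $dz_1\wedge\cdots\wedge dz_n$ to reduce to the $(n,q)$ case, whereas you instead take $L=K_\Omega^{-1}$ trivialised by $(dz_1\wedge\cdots\wedge dz_n)^{-1}$; since $K_\Omega$ is globally trivial with unit-norm generator, these two maneuvers are identical.
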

\begin{proof}
Assume first that $f$ is a $\dbar$-closed $(n,q)$-form   on $Y$. Let $L:=\Omega \times \mathbb{C}$ be the trivial bundle equipped with a metric $e^{-\varphi-A|z|^{2}}$. We can choose a sufficiently large constant $A>0$ which depends on   the ``lower bound" of  $\sqrt{-1}\partial\dbar\varphi-\Theta_E$ and $\sqrt{-1}\partial\dbar\varphi-(1+\delta)\Theta_E$
on $\Omega$  in the sense of \eqref{pos} such that the curvature assumptions \eqref{assone} and \eqref{asstwo} are satisfied. Then there exists an extension $F$ of $f$ to $\Omega$ such that
\[
\int_{\Omega} |F|^{2}e^{-\varphi} e^{-A|z|^2} d V_{\Omega} \leqslant \widehat{C} \int_{Y} \frac{|f|^{2}}{\left|d s\right|^{2}} e^{-\varphi}e^{-A|z|^2} d V_{Y}
\]
according to  Theorem \ref{intrinsic extension}.
Note that $e^{-A|z|^2}$ has  lower and upper bounds which depend on $\Omega$. So there exists $C$ which depends on $\Omega$ and the ``lower bound" of  $\sqrt{-1}\partial\dbar\varphi-\Theta_E$ and $\sqrt{-1}\partial\dbar\varphi-(1+\delta)\Theta_E$
on $\Omega$  in the sense of \eqref{pos}, such that
\[
\int_{\Omega} |F|^{2}e^{-\varphi} d V_{\Omega} \leqslant C \int_{Y} \frac{|f|^{2}}{\left|d s\right|^{2}} e^{-\varphi} d V_{Y}.
\]
When  $f$ is a $\dbar$-closed $(0,q)$-form   on $Y$,  the application of the above argument for $f\wedge dz_1\wedge dz_2\wedge\cdots \wedge dz_n$ and $|dz_1\wedge dz_2\wedge\cdots \wedge dz_n|=1$ (possibly after normalizing the Euclidean metric) complete the proof.
\end{proof}

One would expect to weaken the pseudoconvexity assumption of $X$ in Theorem \ref{ambient extension} as the existence of an upper semi-continuous exhaustion on $X$ and thus $\Omega$ in Corollary \ref{cor2} could be weakened to be just a bounded domain in $\mathbb{C}^{n}$ admitting an upper semi-continuous exhaustion.
However, Varolin provided us with a  counterexample to the expectation.

\begin{ex}\label{ex2}
Set the domain $\Omega:=\left\{z \in \mathbb{C}^{2};  \frac{1}{2}<|z|<1\right\}$ and the subspace
$$Y:=\left\{z=\left(z^{1}, z^{2}\right) \in \Omega ; z^{2}=0\right\}.$$
Then it is easy to construct a continuous exhaustion, such as $\rho=\frac{1}{(|z|-1/2)(1-|z|)}$. Take the function $f(\zeta, 0):=\zeta^{-n}$ for any  $n\in \mathbb{N}^+$. Then $\int_{Y}|f|^{2}|d z^2|^{-2} d V_Y<+\infty$. So if one assumes the above expectation, then there exists some $F \in \mathcal{O}(\Omega)$ such that $\left.F\right|_{Y}=f$.  By Hartogs theorem and the identity theorem,  $F$ has a unique extension to the unit ball.  The restriction of this extension to $Y$ agrees with $f,$ and this means that $f$ itself has a holomorphic extension to the unit disk $\mathbb{D} \times\{0\}$. This is impossible by the identity theorem and that $\zeta^{-n}$ blows up near the origin in $\mathbb{D}$.\qed
\end{ex}

%A further natural question relating to this present paper  is that can we   relax the  condition on the rank of   $E$ in Theorem \ref{ambient extension} to the
% higher rank case.

A further interesting topic about the extension of $\bar\partial$-closed forms is the singular metric version of the main results here, which is very attractive and full of application prospects, e.g.,  the deformation invariance problem of higher cohomology of the pluricanonical bundle of  a  K\"ahler family.  However, it seems very difficult for extensions of general $\dbar$-closed forms.  In \cite[Remark 1.2]{M-V19}, McNeal--Varolin told us that the routine method---taking a regularization of the singular weight first and then passing to some kind of limit---cannot get the singular version of their extension theorems at least on a Stein manifold due to that the minimal extension operator may not exist.

The other difficulty of dealing with the singular metric version is  that  the operator $\dbar$-Laplacian with respect to a singular metric may lose the ellipticity in general. Demailly \cite[p.\ 17]{Dm00} hoped that the Laplacian with respect to a singular metric may have a little ``ellipticity" when the singularity of the metric involved is mild. The expectation of Demailly seems to be a difficult problem in PDE. All in all,  it seems very difficult to obtain
 the singular metric version of an $L^2$ extension theorem of general $\dbar$-closed forms.

 The  paper is organized as follows. We list some results  in Section \ref{pre} to be used in the
proof of Theorem \ref{ambient extension}. Then, we  prove Theorem \ref{ambient extension} in Section \ref{proof}. At last,
 we give a sketch of a direct proof of Theorem \ref{intrinsic extension} in Appendix \ref{app}.

\begin{notation}
Unless otherwise stated,  we will always adopt the notations in Section \ref{intro} in the latter sections and in particular,
use $|s|$ or $|s|e^{-\lambda/2}$  to denote the pointwise norm of $s$.
\end{notation}

\section{Some results used in the proofs}\label{pre}
In this section, we collect several results to be used in the proofs
of our main results.
Let $\iota :Y \hookrightarrow X$ be the natural inclusion of a smooth complex hypersurface $Y$ in a complex manifold $X$ and $L$ a  line bundle on $X$.

It is noteworthy that when $q\geq 1$, there are two natural choices for the restriction to  $Y$ of an $L$-valued $(0,q)$-form on  $X$:
\begin{enumerate}[(i)]
    \item
one can pull back the $L$-valued differential form on $X$ via the natural inclusion $Y\stackrel{\iota}{\hookrightarrow} X$ to produce an
$L$-valued $(0,q)$-form on $Y$, i.e., a section of $L|_Y\otimes \Lambda^{0,q}T^\star_Y\rightarrow Y$, which we call the \emph{intrinsic restriction}, or
    \item
one can view an $L$-valued $(0,q)$-form as an abstract section of an abstract bundle $L\otimes\Lambda^{0,q}T^\star_X$ and pullback the  section. That is to say, the restriction is a section of the restricted vector bundle $(L\otimes \Lambda^{0,q}T^\star_X)|_Y\rightarrow Y$. We call a section  of the vector bundle $(L\otimes \Lambda^{0,q}T^\star_X)|_Y\rightarrow Y$  an \emph{ambient
form}, and the restriction of an $L$-valued $(0,q)$-form on $X$ to $Y$ an \emph{ambient restriction}.
\end{enumerate}
More precisely, one has the following definitions.
\begin{defn}[{\cite[Definition 3.1]{M-V19}}] \label{Two}\
\begin{enumerate}[(i)]
\item An $L|_Y$-valued $(0,q)$-form $\eta$ on $Y$ is called the \emph{intrinsic restriction} of an $L$-valued $(0,q)$-form $\theta$ on $X$ if
\[
\iota ^*\theta = \eta.
\]
\item A section $\xi$ of the vector bundle $(L \otimes \Lambda ^{0,q}T^\star_X)|_Y$ is called the \emph{ambient restriction} of an $L$-valued $(0,q)$-form $\theta$ on $X$ if
\[
\theta (y) = \xi(y)
\]
for all $y \in Y$, that is $\theta|_Y=\xi$. Note that  $|\theta (y)| = |\xi(y)|$ on $Y$ when $X$ and $L$ are equipped with some Hermitian metrics in this case.
\end{enumerate}
\end{defn}
Since $(L\otimes \Lambda^{0,q}T^\star_X)|_Y$ is not a holomorphic vector bundle, it does not admit a natural notion of $\bar\partial$. Then one needs:
\begin{defn}[{\cite[p. 422]{M-V19}}]\label{pullback}
Let $\iota :Y \hookrightarrow X$ be the natural inclusion of a smooth complex hypersurface $Y$ in a complex manifold $X$ and $L$ a  line bundle on $X$.
Then for any  ambient form $\xi$,
 $\iota^{*} \xi$ is defined to be an $L|_Y$-valued
$(0, q)$-form on $Y$ by
$$
\left\langle \iota^{*} \xi, \bar{v}_{1}, \ldots, \bar{v}_{q}\right\rangle:=\left\langle\xi, d \iota(y) \bar{v}_{1}, \ldots, d \iota(y) \bar{v}_{q}\right\rangle \quad \text { in } L_{y}\ (y\in Y), \quad v_{1}, \ldots, v_{q} \in T_{Y, y}^{1,0}.
$$
Note that $\dbar(\iota^{*}\xi)$ is now well defined naturally.
\end{defn}

In \cite{M-V19}, McNeal--Varolin established the following two extension theorems and we will use them to construct some smooth extensions locally on a weakly pseudoconvex K\"ahler manifold in Subsection \ref{sect0.1} and Appendix \ref{app}.
\begin{thm}[Ambient $L^{2}$ extension]\label{mv Ambient}
 Let $X$ be an $n$-dimensional \emph{Stein} manifold with K\"ahler form $\omega$ and $\iota: Y \hookrightarrow X$ a smooth hypersurface. Let $L \rightarrow X$ be a holomorphic line bundle with smooth Hermitian metric $e^{-\varphi}$. Assume that the line bundle $L_{Y} \rightarrow X$
associated to the smooth divisor $Y$ has a section $s \in H^{0}\left(X, L_{Y}\right)$ and a smooth
Hermitian metric $e^{-\lambda}$ such that $Y$ is the divisor of $s$ and
\[
\sup _{X}\left|s\right|^{2} e^{-\lambda} \leq 1.
\]
Assume also that for any $0\leq q \leq n-1$,
\[
\sqrt{-1}\left(\partial \bar{\partial}\left(\varphi-\lambda\right)+\operatorname{Ricci}(\omega)\right) \wedge \omega^{q} \geq 0
\]
and
\[
\sqrt{-1}\left(\partial \bar{\partial}\left(\varphi-(1+\delta) \lambda\right)+\operatorname{Ricci}(\omega)\right) \wedge \omega^{q} \geq 0
\]
for some constant $0<\delta\leq 1.$ Then there is a constant $C>0$ such that for any smooth section $\xi$ of the vector bundle $\left.\left(L \otimes \Lambda^{0, q}T^{\star}_{X}\right)\right|_{Y} \rightarrow Y$ satisfying
\[
\bar{\partial}\left(\iota^{*} \xi\right)=0 \quad \text {and} \quad \int_{Y} \frac{|\xi|_{\omega}^{2} e^{-\varphi}}{\left|d s\right|^{2}_{\omega} e^{-\lambda}} dV_{Y,\omega}<+\infty,
\]
there exists a smooth $\dbar$-closed $L$-valued $(0, q)$-form $\Xi$ on $X$ with
\[
\left.\Xi\right|_{Y}=\xi \quad \text { and } \quad \int_{X}|\Xi|_{\omega}^{2} e^{-\varphi} dV_{\omega} \leq \frac{C}{\delta} \int_{Y} \frac{|\xi|_{\omega}^{2} e^{-\varphi}}{\left|d s\right|^{2}_{\omega} e^{-\lambda}} dV_{Y,\omega}.
\]
The constant $C$ is universal, i.e., it is independent of all the data.
\end{thm}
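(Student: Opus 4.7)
The approach follows the twisted Bochner--Kodaira--Nakano ($L^2$) method of Ohsawa--Takegoshi--Manivel, adapted by McNeal--Varolin to the high-degree case via Kohn's theory. First, I would exhaust $X$ by smoothly bounded strictly pseudoconvex relatively compact domains $X_j \Subset X_{j+1}$ with $\bigcup_j X_j = X$ (available since $X$ is Stein), prove the estimate uniformly on each $X_j$, and pass to the limit by weak $L^2$ compactness and a diagonal extraction argument, noting that $\bar\partial$-closedness and the restriction $\Xi|_Y=\xi$ persist in the limit.

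Second, I would construct a smooth, not-yet $\bar\partial$-closed ambient extension $\tilde\xi$ of $\xi$ from $Y$ to $X$: locally, choose coordinates $(z_1,z')$ with $Y=\{z_1=0\}$, write $\xi$ in such frames and extend it to be independent of the normal variable $z_1$, then patch via a partition of unity subordinate to a tubular neighborhood of $Y$. By construction $\tilde\xi|_Y=\xi$ as an ambient form, and since $\bar\partial(\iota^*\xi)=0$, the Leibniz/Koszul rule forces $\bar\partial\tilde\xi$ to vanish along $Y$ as an intrinsic form; consequently, one can arrange that $|\bar\partial\tilde\xi|^2/(|s|^2 e^{-\lambda})$ is integrable in a neighborhood of $Y$ and, after further cutoffs, on all of $X_j$.

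Third, I would seek a correction $w$ solving $\bar\partial w=\bar\partial\tilde\xi$ with $w|_Y=0$, so that $\Xi:=\tilde\xi-w$ is $\bar\partial$-closed with $\Xi|_Y=\xi$. The $L^2$ estimate comes from the twisted Bochner--Kodaira--Nakano identity with the pair $(\tau,A)$ à la Ohsawa--Takegoshi--Manivel, together with the singular weight
\[
\phi_\epsilon := \varphi + \log(|s|^2 e^{-\lambda}+\epsilon),
\]
where $\tau$ is built from $\log(|s|^2 e^{-\lambda}+\epsilon)$ and $A$ is tuned so that the two curvature hypotheses (the $\delta$-version yielding the factor $1/\delta$ and the plain version absorbing the Ricci term from integration by parts wedged with $\omega^q$) give the required positivity on $(0,q+1)$-forms. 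The hypothesis $\sup_X |s|^2e^{-\lambda}\le 1$ is used to compare the singular weight to its limit $\varphi+\log|s|^2 e^{-\lambda}$, and the resulting $L^2$ bound implies that $w$ vanishes on $Y$ in the ambient sense because $\int_X |w|^2 e^{-\varphi}/(|s|^2 e^{-\lambda})\,dV_\omega<\infty$.

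The main obstacle is regularity: since $\bar\partial$ is not hypoelliptic in bidegrees $(0,q)$ with $q\ge 1$, the standard $L^2$ minimizer of $\bar\partial u=\bar\partial\tilde\xi$ need not be smooth. The key device, following \cite{M-V19}, is to work with Kohn's canonical solution of the $\bar\partial$-Neumann problem on each smoothly bounded strictly pseudoconvex $X_j$: this solution is smooth up to the boundary for smooth data, and its $L^2$ norm is controlled by the twisted basic estimate above. One verifies that Kohn's solution still vanishes on $Y$ in the ambient sense (because the singular weight forbids a non-vanishing boundary value), so $\Xi_j:=\tilde\xi-w_j$ is a smooth $\bar\partial$-closed ambient extension on $X_j$ with the right $L^2$ bound. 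Passing $\epsilon\to 0$ and $j\to\infty$, extracting weakly convergent subsequences, and using elliptic regularity for $\bar\partial\oplus\bar\partial^\ast$ away from the boundary together with the $\bar\partial$-closedness (to upgrade weak limits to smooth ones in the interior) produces the desired smooth $\bar\partial$-closed extension $\Xi$ on $X$ with the stated universal constant.
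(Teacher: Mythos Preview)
This theorem is quoted from \cite{M-V19} and not proved in the present paper; however, the paper's proof of Theorem~\ref{ambient extension} in Section~\ref{proof} reproduces the McNeal--Varolin argument (with minor adaptations), so one can compare your outline against that.

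Your high-level architecture---exhaustion, smooth preliminary extension, twisted $\dbar$-estimate, vanishing on $Y$ via a singular weight, weak limit---is correct in spirit, but the regularity mechanism you describe is not the one actually used, and your version has a gap. You propose taking ``Kohn's canonical solution'' of $\dbar w=\dbar\tilde\xi$ with respect to the singular weight $\phi_\varepsilon$, asserting that it is smooth and that its norm is controlled by the twisted basic estimate. But these two desiderata pull against each other: the solution controlled by the twisted/singular-weight estimate is not the same as the Neumann solution for a fixed smooth metric, and there is no a priori reason the minimizer in the singular-weight space should be smooth (indeed, this is precisely the difficulty Demailly flagged in \cite{Dm00}). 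The paper (following \cite{M-V19}) separates the two concerns as follows. First, one does \emph{not} solve $\dbar w=\dbar\tilde f_\infty$; instead one introduces a cutoff $\theta(\varepsilon^{-2}|s|^2)$ and sets $g_\varepsilon:=s^{-1}\dbar(\theta\tilde f_\infty)$, which is an $L\otimes E^*$-valued $(n,q+1)$-form, and solves the \emph{twisted Laplacian} equation $\square V_{\varepsilon}=g_\varepsilon$ with $\square=TT^*+S^*S$; ellipticity of $\square$ (Lemma~\ref{ellipticity}) gives $V_\varepsilon$ the regularity of $g_\varepsilon$, and $v_\varepsilon:=T^*V_\varepsilon$ furnishes the candidate extension $u_{\varepsilon,j}=\theta\tilde f_\infty-\sqrt{\tau+A}\,v_{\varepsilon,j}\otimes s$ with the correct $L^2$ bound. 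Second---and this is the step missing from your proposal---one \emph{minimizes} the $L^2(e^{-\varphi})$-norm over the affine ball $\mathscr{B}_j=u_j+s\mathscr{B}_q^2(\Omega_j)$. The minimizer $U_j$ is orthogonal to $s\mathscr{B}_q^2$, which forces $\dbar^*U_j=0$ (Proposition~\ref{dbarstar=0}), hence $\square_0 U_j=0$ for the \emph{untwisted} Laplacian, and smoothness of $U_j$ follows from ellipticity of $\square_0$. This decoupling---twisted equation for the estimate, untwisted Laplacian for regularity---is the crux of the method, and your invocation of ``Kohn's canonical solution'' elides it.

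A secondary point: the persistence of the restriction $\Xi|_Y=\xi$ under the weak limit $j\to\infty$ is not automatic. The paper handles it by encoding the ambient restriction in current equations \eqref{current1new}--\eqref{current2new} and using uniform local $\mathscr{C}^1$ bounds on $U_j$ (coming from $\square_0 U_j=0$ and interior elliptic estimates) to pass to the limit in those current equations; merely knowing $\dbar$-closedness of the weak limit does not suffice.
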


\rem On the above theorem, there is a typo on \cite[Theorem 1]{M-V19} which only requires $\delta>0$. In fact, we can  conclude that the first two lines    in \cite[p.\ 438]{M-V19} cannot be true if $\delta>1$. Denote $e^{v}$ in  \cite[p.\ 438]{M-V19} by $|s|^2$ here, and then
 $$e^{v}(\tau+A)=|s|^2(2+2e^{a-1}+\log(2e^{a-1}-1))\geq |s|^2\cdot 2e^{a-1}=\frac{2e^{\gamma-1}|s|^2}{(\varepsilon^2+|s|^2)^{\delta}}.$$
As $\delta>1$, the above is
 $$2e^{\gamma-1}\cdot\frac{|s|^2}{\varepsilon^2+|s|^2}\cdot\frac{1}{(\varepsilon^2+|s|^2)^{\delta-1}},$$
 which cannot be bounded when $\varepsilon\to 0$ and  $|s|\to 0$ (take $|s|=\varepsilon\to 0$ for example).

\begin{thm}[Intrinsic $L^{2}$ extension]\label{mv Intrinsic}
With the hypotheses of Theorem \ref{mv Ambient}, there is a universal constant $C>0$ such that for any smooth
$\dbar$-closed $L$-valued $(0, q)$-form $\eta$ on $Y$ satisfying
\[
\int_{Y} \frac{|\eta|_{\omega}^{2} e^{-\varphi}}{\left|d s\right|^{2}_{\omega} e^{-\lambda}} dV_{Y,\omega}<+\infty,
\]
there exists a smooth $\bar{\partial}$-closed $L$-valued $(0, q)$-form $\Pi$ on $X$ such that
\[
\iota^{*} \Pi=\eta \quad \text{and} \quad \int_{X}|\Pi|_{\omega}^{2} e^{-\varphi} dV_{\omega} \leq \frac{C}{\delta} \int_{Y} \frac{|\eta|_{\omega}^{2} e^{-\varphi}}{\left|d s\right|^{2}_{\omega} e^{-\lambda}} dV_{Y,\omega}.
\]
\end{thm}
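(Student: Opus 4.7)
The natural approach is to deduce Theorem \ref{mv Intrinsic} directly from Theorem \ref{mv Ambient} via the pointwise isometric operator $P^{*}$ induced by the orthogonal projection $P\colon T_{X}^{0,1}|_Y \to T_{Y}^{0,1}$ of the K\"ahler metric $\omega$. This operator is constructed and its key properties are already recorded in the paragraph preceding Theorem \ref{intrinsic extension} in the present paper, so that the reduction is essentially formal once the ambient theorem is granted.

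\textbf{Steps.} Given the smooth $\bar\partial$-closed $L|_Y$-valued $(0,q)$-form $\eta$ on $Y$, first form the ambient form $\xi := P^{*}\eta$, a smooth section of $(L \otimes \Lambda^{0,q}T^{\star}_X)|_Y \to Y$; smoothness holds because $P$ is a smooth $\mathbb{C}$-linear bundle map determined by $\omega$. Next, invoke the two properties of $P^{*}$ already noted in the excerpt: (i) $\iota^{*}\xi = \iota^{*}P^{*}\eta = \eta$, so in particular $\bar\partial(\iota^{*}\xi) = \bar\partial\eta = 0$; and (ii) $P^{*}$ is a pointwise isometry for the norms of $L$-valued $(0,q)$-forms induced by $\omega$ and $e^{-\varphi}$, so $|\xi|_\omega = |\eta|_\omega$ everywhere on $Y$, whence
\[
\int_Y \frac{|\xi|^2_\omega e^{-\varphi}}{|ds|^2_\omega e^{-\lambda}} dV_{Y,\omega} = \int_Y \frac{|\eta|^2_\omega e^{-\varphi}}{|ds|^2_\omega e^{-\lambda}} dV_{Y,\omega} < +\infty.
\]
Thus $\xi$ satisfies the hypotheses of Theorem \ref{mv Ambient}, producing a smooth $\bar\partial$-closed $L$-valued $(0,q)$-form $\Xi$ on $X$ with $\Xi|_Y = \xi$ and
\[
\int_X |\Xi|^2_\omega e^{-\varphi}\, dV_\omega \le \frac{C}{\delta} \int_Y \frac{|\xi|^2_\omega e^{-\varphi}}{|ds|^2_\omega e^{-\lambda}} dV_{Y,\omega}.
\]
Finally, set $\Pi := \Xi$. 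Then $\iota^{*}\Pi = \iota^{*}\xi = \eta$, and combining with the identity $|\xi|_\omega = |\eta|_\omega$ yields the required estimate with the same universal constant $C/\delta$.

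\textbf{Main obstacle.} Granted Theorem \ref{mv Ambient}, the reduction is purely formal; the whole analytic burden (Kohn-type regularity for the high-degree $\bar\partial$-equation and the twisted Bochner--Kodaira identity exploiting the $(1+\delta)\lambda$-perturbation of $\varphi$ together with the bound $|s|^2 e^{-\lambda}\le 1$) has already been absorbed into the ambient statement. The only genuine point demanding care is the distinction between the ambient restriction $\Xi|_Y$, a section of $(L \otimes \Lambda^{0,q}T^{\star}_X)|_Y$, and the intrinsic pullback $\iota^{*}\Xi$, a section of $L|_Y \otimes \Lambda^{0,q}T^{\star}_Y$ (cf.\ Definitions \ref{Two} and \ref{pullback}); one must verify that $P^{*}$ is both a right inverse of $\iota^{*}$ and a pointwise isometry. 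If instead one attempted a direct proof without routing through Theorem \ref{mv Ambient}, the principal difficulty would be the loss of hypoellipticity of $\bar\partial$ in bidegree $(0,q)$ for $q\ge 1$ flagged in the introduction, forcing a careful Kohn-solution or minimizer setup --- precisely the route sketched in the paper's Appendix \ref{app} for the related Theorem \ref{intrinsic extension}.
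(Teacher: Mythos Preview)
Your reduction via $P^{*}$ is correct and is exactly the argument the paper uses to pass from its own ambient theorem (Theorem~\ref{ambient extension}) to its intrinsic theorem (Theorem~\ref{intrinsic extension}). Note, however, that the paper does not itself give a proof of Theorem~\ref{mv Intrinsic}: this statement is quoted from McNeal--Varolin \cite{M-V19} as a tool (alongside Theorem~\ref{mv Ambient}), so there is no ``paper's own proof'' to compare against beyond the citation. Your deduction of the intrinsic Stein case from the ambient Stein case is nonetheless valid and matches the paper's general philosophy for the intrinsic/ambient passage.
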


Consider the modified $\bar{\partial}$-operators
$$T:=\bar{\partial}\circ\sqrt{\tau+A}\quad \text{and}\quad  S:=\sqrt{\tau}\cdot\bar{\partial}$$
acting on $(n, q)$-forms with values in a vector bundle, where  $\tau$, $A$ are positive smooth functions.
%and $\dbar$ is taken as the usual maximal extension of $\dbar$ acting on smooth forms. It is easy to see that they are closed and densely defined when the metrics involved are smooth.
Then $S \circ T=0$. In solving $\dbar$-equation,  the  basic estimate about the modified $\bar{\partial}$ operator is always needed to construct some bounded linear functionals.  On different occasions, several classical    basic estimates have been established in \cite{O95, B96, M96, S96, Dm00}, for example. Here we adopt the following one.
\begin{lemma}[Twisted basic estimate]\label{basic estimate}
Let $(X, \omega)$ be a  K\"ahler manifold  and $E$ a holomorphic line  bundle with a smooth Hermitian
metric $e^{-\varphi}$ over $X$. Assume that $\tau$ and $A$ are smooth and  positive functions on $X$. Fix a smoothly bounded domain $\Omega \subset \subset X$ such that its boundary $\partial \Omega$ is pseudoconvex (e.g.,  \cite[Section 1.5]{Va10} for this notion and its effect).
Then for any smooth  $E$-valued $(n, q)$-form $u$ in the domain of $\dbar_{\varphi}^{*}$, one has the estimate
$$\begin{aligned}
& \int_{\Omega}(\tau+A)\left|\dbar_{\varphi}^* u\right|_{\omega}^{2}e^{-\varphi} d V_{\omega}+\int_{\Omega} \tau\left|\dbar u\right|_{\omega}^{2}e^{-\varphi} d V_{\omega} \\
\geq & \int_{\Omega}\left\langle\left[\sqrt{-1}\left(\tau \partial \bar{\partial}\varphi-\partial \bar{\partial} \tau-\frac{\partial \tau \wedge \bar{\partial} \tau}{A}\right), \Lambda_\omega\right] u, u\right\rangle_{\omega} e^{-\varphi} d V_{\omega},
\end{aligned}$$
where $\Lambda_\omega$ is the dual Lefschetz operator.
\end{lemma}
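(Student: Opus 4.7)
My plan is to derive this twisted basic estimate from the standard Bochner--Kodaira--Nakano (BKN) inequality for $E$-valued $(n,q)$-forms on the K\"ahler pair $(\Omega, \omega)$ by applying it to the twisted form $v = \sqrt{\tau}\, u$, and then using a weighted Cauchy--Schwarz argument with parameter $A$ to absorb the residual cross terms back into the left-hand side.

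First, because $\omega$ is K\"ahler, $\partial\Omega$ is pseudoconvex, and $v=\sqrt\tau\,u$ lies in $\mathrm{dom}(\bar\partial^{*}_{\varphi})$ whenever $u$ does, the standard BKN inequality
\[
\int_\Omega |\bar\partial v|^2_\omega e^{-\varphi}\,dV_\omega + \int_\Omega |\bar\partial^{*}_{\varphi} v|^2_\omega e^{-\varphi}\,dV_\omega \;\geq\; \int_\Omega \langle [\sqrt{-1}\partial\bar\partial\varphi,\Lambda_\omega]\,v,v\rangle_\omega e^{-\varphi}\,dV_\omega
\]
holds; no Ricci term appears because of the $(n,q)$-bidegree, and the Levi boundary contribution is non-negative and thus discarded. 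Apply this with $v=\sqrt\tau\,u$ and expand using the Leibniz rule $\bar\partial v = \sqrt\tau\,\bar\partial u + \bar\partial\sqrt\tau\wedge u$ together with the K\"ahler commutation identity relating $[\bar\partial^{*}_{\varphi},\sqrt\tau]$ to $[\Lambda_\omega,\partial\sqrt\tau\wedge\cdot\,]$. On $(n,q)$-forms the term $\partial\sqrt\tau\wedge u$ vanishes for bidegree reasons, leaving $\bar\partial^{*}_{\varphi} v = \sqrt\tau\,\bar\partial^{*}_{\varphi} u - \sqrt{-1}\,\partial\sqrt\tau\wedge\Lambda_\omega u$.

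Squaring and integrating yields the main terms $\int\tau|\bar\partial u|^2+\int\tau|\bar\partial^{*}_{\varphi} u|^2$ together with several cross and self-interaction terms. Integration by parts applied to the cross term $2\,\mathrm{Re}\int\sqrt\tau\,\langle\bar\partial u,\bar\partial\sqrt\tau\wedge u\rangle$, combined with the pointwise quadratic contributions $|\bar\partial\sqrt\tau\wedge u|^2$ and $|\partial\sqrt\tau\wedge\Lambda_\omega u|^2$ and the identity $\partial\tau=2\sqrt\tau\,\partial\sqrt\tau$, reassembles into the operator $-\langle[\sqrt{-1}\partial\bar\partial\tau,\Lambda_\omega]u,u\rangle$ on the right. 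The remaining cross term $-2\,\mathrm{Re}\int\sqrt{-1}\sqrt\tau\,\langle\bar\partial^{*}_{\varphi} u,\partial\sqrt\tau\wedge\Lambda_\omega u\rangle$ has no pure-curvature interpretation, so I would apply the weighted Cauchy--Schwarz inequality $2|\mathrm{Re}\langle X,Y\rangle|\leq A|X|^2+A^{-1}|Y|^2$ with $X=\bar\partial^{*}_{\varphi} u$. The $A|X|^2$ piece is absorbed on the left, upgrading $\int\tau|\bar\partial^{*}_{\varphi} u|^2$ to $\int(\tau+A)|\bar\partial^{*}_{\varphi} u|^2$, while the $A^{-1}|Y|^2$ piece, via the adjointness of wedge and contraction against $\omega$, contributes exactly $-\int\langle[\sqrt{-1}A^{-1}\partial\tau\wedge\bar\partial\tau,\Lambda_\omega]u,u\rangle$ on the right.

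Rearranging produces the stated inequality. The main obstacle is the bookkeeping in these last two steps: tracking every commutator generated by $[\bar\partial^{*}_{\varphi},\sqrt\tau]$ acting on $(n,q)$-forms, verifying that the integration-by-parts of the $\partial\bar\partial\sqrt\tau$ contribution meshes with the quadratic self-terms to form precisely $-\sqrt{-1}\partial\bar\partial\tau$, and confirming that the $A^{-1}|Y|^2$ Cauchy--Schwarz remainder rewrites, up to $\Lambda_\omega$-commutation, as the $-\sqrt{-1}A^{-1}\partial\tau\wedge\bar\partial\tau$ term paired with $u$. Small sign or coefficient mistakes would destroy the precise identification on the right, but the calculation itself is local and rests only on K\"ahler identities.
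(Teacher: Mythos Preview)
Your overall strategy---reduce the twisted estimate to the untwisted Bochner--Kodaira--Nakano inequality on the pseudoconvex domain $\Omega$, then absorb the leftover cross term by a Cauchy--Schwarz with weight $A$---is sound and is precisely the content of what the paper invokes. The paper itself gives no argument beyond citing \cite[Lemma 2.2]{M-V19} and remarking that the input is the twisted Bochner--Kodaira--Morrey--Kohn identity together with pseudoconvexity of $\partial\Omega$; your outline is therefore more explicit than the paper's own proof.

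One practical comment on the execution. The substitution $v=\sqrt{\tau}\,u$ generates cross terms on \emph{both} the $\bar\partial$- and $\bar\partial^{*}_{\varphi}$-sides, and your claim that the $\bar\partial$-cross term together with the two quadratic pieces reassembles into $-\langle[\sqrt{-1}\partial\bar\partial\tau,\Lambda_\omega]u,u\rangle$ requires a further integration by parts whose boundary contribution and exact coefficients are delicate (indeed, a naive count of the Cauchy--Schwarz remainder yields $\tfrac{1}{4A}\partial\tau\wedge\bar\partial\tau$ rather than $\tfrac{1}{A}$). The cleaner and equivalent route is to apply the untwisted BKN inequality directly to $u$ with the modified weight $e^{-\psi}:=\tau e^{-\varphi}$: then the curvature term already contains $\tau\partial\bar\partial\varphi-\partial\bar\partial\tau+\tau^{-1}\partial\tau\wedge\bar\partial\tau$, and the single cross term coming from $\bar\partial^{*}_{\psi}u=\bar\partial^{*}_{\varphi}u+i\tau^{-1}\partial\tau\wedge\Lambda_\omega u$ is exactly what the Cauchy--Schwarz with weight $A$ handles, producing the $-A^{-1}\partial\tau\wedge\bar\partial\tau$ with the correct constant and no spare integration by parts. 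This is the same idea as yours---the map $u\mapsto\sqrt\tau\,u$ is the isometry intertwining the two weights---but it sidesteps the bookkeeping you flagged as the main obstacle.
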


\begin{proof}
The proof is the same as that of \cite[Lemma 2.2]{M-V19} which is in the context of  the Stein setting. However, the estimate attributes essentially to the usual  twisted Bochner--Kodaira--Morrey--Kohn identity and the pseudoconvexity of $\partial\Omega$.
\end{proof}

Furthermore, the ellipticity of the twisted $\dbar$-Laplacian $\square:=T T^{*}+S^{*} S$ is needed.
\begin{lemma}[{\cite[Proposition 2.3]{M-V19}}]\label{ellipticity}
 Assume that the functions $\tau$, $A$ and the Hermitian metric $e^{-\varphi}$ of the holomorphic line bundle $E$  are smooth, and that $\tau$ and $\tau+A$ are positive, then the operator $\square$ is second order (interior) elliptic with smooth coefficients.
\end{lemma}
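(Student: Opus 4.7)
The plan is to compute the principal symbol of $\square$ directly and verify its positive-definiteness at each nonzero real covector. Because $\tau$ and $\tau+A$ are smooth, real, and positive, multiplication by $\sqrt{\tau+A}$ and $\sqrt{\tau}$ are zeroth-order self-adjoint operators. Hence $T=\dbar\circ\sqrt{\tau+A}$ and $S=\sqrt{\tau}\cdot\dbar$ are first-order differential operators whose principal symbols at a real cotangent vector $\xi$ are $\sqrt{\tau+A}\,e(\xi)$ and $\sqrt{\tau}\,e(\xi)$, respectively, where $e(\xi):=\xi^{0,1}\wedge\cdot$ is the standard principal symbol of $\dbar$ on $E$-valued $(n,\bullet)$-forms. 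Taking formal adjoints (with respect to the pointwise Hermitian inner product induced by $\omega$ and $e^{-\varphi}$) yields principal symbols $\sqrt{\tau+A}\,e(\xi)^{*}$ and $\sqrt{\tau}\,e(\xi)^{*}$, where $e(\xi)^{*}$ is the pointwise contraction with $\xi^{0,1}$.

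Composition of differential operators respects principal symbols at the top order, so
$$\sigma_{\square}(\xi)=\sigma_{TT^{*}}(\xi)+\sigma_{S^{*}S}(\xi)=(\tau+A)\,e(\xi)\,e(\xi)^{*}+\tau\,e(\xi)^{*}\,e(\xi).$$
For any $E$-valued $(n,q)$-form $\alpha$ at the basepoint, pairing this symbol with $\alpha$ gives
$$\langle\sigma_{\square}(\xi)\alpha,\alpha\rangle=(\tau+A)\,|e(\xi)^{*}\alpha|^{2}+\tau\,|e(\xi)\alpha|^{2}\geq\min(\tau,\tau+A)\bigl(|e(\xi)\alpha|^{2}+|e(\xi)^{*}\alpha|^{2}\bigr)=\min(\tau,\tau+A)\,|\xi^{0,1}|^{2}\,|\alpha|^{2},$$
where the last equality is the canonical anticommutation relation $e(\xi)\,e(\xi)^{*}+e(\xi)^{*}\,e(\xi)=|\xi^{0,1}|^{2}\,\mathrm{Id}$ acting on forms. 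Since $\tau>0$, $\tau+A>0$, and $|\xi^{0,1}|^{2}>0$ for every nonzero real covector $\xi$, this quadratic form is strictly positive; hence $\sigma_{\square}(\xi)$ is invertible and $\square$ is second-order interior elliptic. Smoothness of the coefficients of $\square$ is immediate from the smoothness of $\tau$, $A$, the Hermitian weight $e^{-\varphi}$, and the K\"ahler metric $\omega$.

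The argument presents no serious obstacle: it reduces to a standard principal-symbol calculation for the $\dbar$-complex combined with the hypothesized positivity of the two weights $\tau$ and $\tau+A$. The only point requiring mild care is to check that the zeroth-order multiplicative factors in $T$ and $S$ enter additively with \emph{positive} weights in $\sigma_{\square}(\xi)$, which is made transparent by the decomposition above and prevents any cancellation from spoiling positive-definiteness.
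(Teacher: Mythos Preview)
Your argument is correct. The paper does not supply a proof of this lemma at all; it simply cites \cite[Proposition~2.3]{M-V19} and moves on. Your principal-symbol computation is exactly the standard one (and is essentially the argument McNeal--Varolin give): the zeroth-order factors $\sqrt{\tau+A}$ and $\sqrt{\tau}$ pass through to the symbol level as positive scalar multipliers, and the anticommutation identity $e(\xi)e(\xi)^{*}+e(\xi)^{*}e(\xi)=|\xi^{0,1}|^{2}\,\mathrm{Id}$ then yields strict positivity of $\sigma_{\square}(\xi)$ for $\xi\neq 0$. There is nothing to add.
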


\begin{lemma}[{\cite[Lemma 6.9]{Dm82}}]\label{extend across}
Let $\Omega$ be an open subset of $\mathbb{C}^{n}$ and $Y$ an analytic subset of $\Omega$.  Assume that $v$ is a $(p, q-1)$-form with $L_{\text {loc }}^{2}$ coefficients and $w$ is a $(p, q)$-form with $L_{\text {loc }}^{1}$ coefficients such that $\dbar v=w$ on $\Omega\setminus Y$ (in the sense of distribution theory). Then $\dbar v=w$ on $\Omega$. (A more general version for the first order differential operator can be found in \cite[Proposition 4.8]{B18}.)
\end{lemma}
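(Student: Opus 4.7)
The statement is local and biholomorphically invariant, so I would fix a point of $Y$ and work in a polydisk $U \subset \Omega$. Since $Y$ admits a stratification $Y \supset Y_{\mathrm{sing}} \supset (Y_{\mathrm{sing}})_{\mathrm{sing}} \supset \cdots$ by nowhere-dense closed analytic subsets of strictly decreasing dimension, and each successive difference is a smooth complex submanifold, I would argue by induction on the length of this filtration: it is enough to propagate the equation across one smooth stratum at a time. Thus the whole matter reduces to the following local statement: if $M = \{z_1 = \cdots = z_k = 0\} \subset U$ is a coordinate submanifold and $\dbar v = w$ holds on $U \setminus M$ in the distributional sense, then $\dbar v = w$ holds on $U$.

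To prove this reduced form I would test against an arbitrary compactly supported smooth form $\phi$ of complementary bidegree, introduce a cutoff $\chi_\varepsilon(z) := \chi(|z'|/\varepsilon)$ with $z' = (z_1,\ldots,z_k)$ and $\chi\in C^\infty(\mathbb{R})$ equal to $0$ on $[0,1]$ and $1$ on $[2,\infty)$, and substitute $\chi_\varepsilon\phi$ (which has compact support in $U\setminus M$) into the known distributional identity. Expanding $\dbar(\chi_\varepsilon\phi) = \dbar\chi_\varepsilon\wedge\phi + \chi_\varepsilon\dbar\phi$ splits the identity into three terms: two ``bulk'' terms which, by $v,w\in L^1_{\mathrm{loc}}$ and dominated convergence, converge as $\varepsilon\to 0$ to exactly the pairings appearing in the desired identity $\dbar v=w$ on $U$, together with one remainder
\[
R_\varepsilon \;:=\; \int_U v \wedge \dbar\chi_\varepsilon \wedge \phi.
\]
The whole argument now hinges on showing $R_\varepsilon\to 0$.

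The bound for $R_\varepsilon$ is the crux and is where the $L^2$ hypothesis on $v$ is actually used. A direct computation gives $|\dbar\chi_\varepsilon|\lesssim \varepsilon^{-1}\mathbf{1}_{T_\varepsilon}$ with $T_\varepsilon := \{\varepsilon\leq |z'|\leq 2\varepsilon\}$, while $\mathrm{vol}(T_\varepsilon\cap\mathrm{supp}\,\phi)\lesssim \varepsilon^{2k}$; Cauchy--Schwarz then yields
\[
|R_\varepsilon| \;\lesssim\; \|\phi\|_\infty\cdot \varepsilon^{-1}\cdot \varepsilon^{k}\cdot\|v\|_{L^2(T_\varepsilon)} \;=\; \varepsilon^{\,k-1}\,\|v\|_{L^2(T_\varepsilon)}\,\|\phi\|_\infty.
\]
In codimension $k\geq 2$ there is an extra favorable factor $\varepsilon^{k-1}$ and no $L^2$ control on $v$ is even needed; the delicate situation is the codimension-one case $k=1$, where the powers of $\varepsilon$ cancel exactly and one must rely on $\|v\|_{L^2(T_\varepsilon)}\to 0$, which follows from $v\in L^2_{\mathrm{loc}}$ together with $|T_\varepsilon|\to 0$ (absolute continuity of the Lebesgue integral). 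I expect this codimension-one step to be the main obstacle: the estimate is sharp, and weakening the hypothesis from $L^2_{\mathrm{loc}}$ to merely $L^1_{\mathrm{loc}}$ would leave the $\varepsilon^{-1}$ blow-up of $\dbar\chi_\varepsilon$ unabsorbed across a hypersurface.
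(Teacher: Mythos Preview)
Your argument is correct and is precisely the classical proof of Demailly that the paper cites (the paper itself does not reprove the lemma but merely quotes \cite[Lemme~6.9]{Dm82}): stratify $Y$ and, near each smooth stratum, cut off with $\chi(|z'|/\varepsilon)$, test the distributional identity against $\chi_\varepsilon\phi$, and kill the remainder $R_\varepsilon$ by the Cauchy--Schwarz estimate $|R_\varepsilon|\lesssim \varepsilon^{k-1}\|v\|_{L^2(T_\varepsilon)}$, the codimension-one case being exactly where the $L^2_{\mathrm{loc}}$ hypothesis on $v$ is needed.
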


\begin{lemma}[{\cite[Chapter VI-(5.8) Proposition]{Dm12}}]\label{eigen}
Let $(X, \omega)$ be an $n$-dimensional  Hermitian manifold and  $\gamma$  a real $(1,1)$-form.
Then there exists an $\omega$-orthogonal basis $\left(\zeta_{1}, \zeta_{2}, \ldots, \zeta_{n}\right)$ in $T_{X}^{1,0}$ which diagonalizes both forms $\omega$ and $\gamma:$
\[
\omega=\sqrt{-1} \sum_{1 \leqslant j \leqslant n} \zeta_{j}^{\star} \wedge \bar{\zeta}_{j}^{\star}, \quad \gamma= \sqrt{-1}\sum_{1 \leqslant j \leqslant n} \gamma_{j} \zeta_{j}^{\star} \wedge \bar{\zeta}_{j}^{\star}, \quad \gamma_{j} \in \mathbb{R}.
\]
For every form $u=\sum u_{J, K} \zeta_{J}^{\star} \wedge \bar{\zeta}_{K}^{\star},$ one has
\[
[\gamma, \Lambda_\omega] u=\sum_{J, K}\left(\sum_{j \in J} \gamma_{j}+\sum_{j \in K} \gamma_{j}-\sum_{1 \leqslant j \leqslant n} \gamma_{j}\right) u_{J, K} \zeta_{J}^{\star} \wedge \bar{\zeta}_{K}^{\star}.
\]
\end{lemma}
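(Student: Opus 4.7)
The proof decomposes into two independent steps: a simultaneous diagonalization, and an explicit computation of $[\gamma,\Lambda_\omega]$ in the resulting diagonal frame. For the first step, I would work pointwise at a point $x\in X$. The $(1,1)$-form $\omega$ corresponds to a positive-definite Hermitian form $h_\omega$ on $T^{1,0}_{X,x}$, while the real $(1,1)$-form $\gamma$ corresponds to another Hermitian form $h_\gamma$ on the same space. Standard linear algebra --- diagonalizing a Hermitian form with respect to a positive-definite Hermitian form --- produces a basis $(\zeta_1,\ldots,\zeta_n)$ of $T^{1,0}_{X,x}$ which is $h_\omega$-orthonormal and diagonalizes $h_\gamma$ with real eigenvalues $\gamma_j$. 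Re-expressing both forms in the dual coframe $(\zeta_j^\star)$ yields the claimed normal forms for $\omega$ and $\gamma$.

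For the second step, I would introduce the standard creation/annihilation formalism on $\Lambda^{\bullet,\bullet} T^\star_{X,x}$. Let $\epsilon_j$ (resp.\ $\bar\epsilon_j$) denote exterior multiplication by $\zeta_j^\star$ (resp.\ $\bar\zeta_j^\star$), and let $i_j, \bar i_j$ denote their adjoints with respect to the Hermitian metric on forms induced by $\omega$. These operators satisfy the canonical anticommutation relations $\{\epsilon_j, i_k\}=\delta_{jk}=\{\bar\epsilon_j,\bar i_k\}$, with all other anticommutators vanishing. In this language $L_\omega=\sqrt{-1}\sum_j\epsilon_j\bar\epsilon_j$ and $L_\gamma=\sqrt{-1}\sum_j\gamma_j\epsilon_j\bar\epsilon_j$, while the adjoint works out (after checking signs) to $\Lambda_\omega=-\sqrt{-1}\sum_k \bar i_k i_k$. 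The commutator $[L_\gamma,\Lambda_\omega]=\sum_{j,k}\gamma_j[\epsilon_j\bar\epsilon_j,\,\bar i_k i_k]$ then reduces, by iterated use of the anticommutation relations, to a diagonal action on each basis form $\zeta_J^\star\wedge\bar\zeta_K^\star$: every $j\in J$ contributes $+\gamma_j$ through the $\epsilon_j$ factor, every $j\in K$ contributes $+\gamma_j$ through the $\bar\epsilon_j$ factor, and a universal subtraction $-\sum_j\gamma_j$ arises from the trace-like term produced when the annihilators pass through all of $\epsilon_j\bar\epsilon_j$ without finding a match. Summing gives the eigenvalue stated in the lemma.

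The main obstacle is purely combinatorial: tracking the signs generated by the anticommutations together with the prefactors of $\sqrt{-1}$. To minimize bookkeeping, I would first use linearity in $\gamma$ to reduce to the single-index case $\gamma=\gamma_\ell\sqrt{-1}\,\zeta_\ell^\star\wedge\bar\zeta_\ell^\star$ for a fixed index $\ell$. In that case the computation splits into four elementary situations according to whether $\ell$ lies in $J\cap K$, in $J\setminus K$, in $K\setminus J$, or in neither, each of which produces the expected multiplier $(\mathbf 1_{\ell\in J}+\mathbf 1_{\ell\in K}-1)$ acting on $\zeta_J^\star\wedge\bar\zeta_K^\star$. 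Summing over $\ell$ with weights $\gamma_\ell$ then yields the general formula. As a sanity check, specializing to $\gamma=\omega$ recovers the familiar identity $[L_\omega,\Lambda_\omega]u=(p+q-n)u$ on $(p,q)$-forms, which matches our formula since $|J|=p$ and $|K|=q$.
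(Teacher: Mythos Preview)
The paper does not actually supply a proof of this lemma; it merely quotes the statement from \cite[Chapter~VI-(5.8)~Proposition]{Dm12} and uses it as a black box. Your proposal is correct and is essentially the standard argument found in Demailly's book: pointwise simultaneous diagonalization of two Hermitian forms, followed by a direct computation of $[\gamma,\Lambda_\omega]$ in the diagonal coframe using the creation/annihilation formalism (or, equivalently, the contraction identities $\bar\zeta_k\,\lrcorner\,(\zeta_J^\star\wedge\bar\zeta_K^\star)$). The single-index reduction and the sanity check against $[L_\omega,\Lambda_\omega]=(p+q-n)\operatorname{Id}$ are exactly the right way to keep the signs under control.
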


\begin{lemma}[{\cite[Chapter VIII-(6.4)]{Dm12}}]\label{less than q eigen}
Let $\theta$ be a smooth real $(1,1)$-form on an $n$-dimensional Hermitian manifold $(X, \omega)$.
If $\lambda_{1}(x) \leqslant \cdots \leqslant \lambda_{n}(x)$ are the eigenvalues of $\theta$ with respect to $\omega$ for all $x \in X$ and $\lambda_{1}+\cdots+\lambda_{q}>0$, then for arbitrary $(n,q)$-form $g$ on $X$,
$$\left\langle [\theta,\Lambda_\omega] g, g\right\rangle_{\omega} \geqslant\left(\lambda_{1}+\cdots+\lambda_{q}\right)|g|_{\omega}^{2}$$
and
$$\int_{X}\left\langle [\theta,\Lambda_\omega]^{-1} g, g\right\rangle_{\omega} d V_{\omega} \leqslant \int_{X} \frac{1}{\lambda_{1}+\cdots+\lambda_{q}}|g|_{\omega}^{2} dV_{\omega}.$$
\end{lemma}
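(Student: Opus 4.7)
The plan is to reduce the statement to a pointwise linear algebra fact via Lemma \ref{eigen} and then integrate. Since $\theta$ is a smooth real $(1,1)$-form and $\omega$ is Hermitian, at every $x \in X$ Lemma \ref{eigen} supplies an $\omega$-orthonormal basis $(\zeta_1, \ldots, \zeta_n)$ of $T_{X,x}^{1,0}$ that simultaneously diagonalizes both forms, with $\theta = \sqrt{-1} \sum_j \lambda_j(x)\, \zeta_j^\star \wedge \bar\zeta_j^\star$. I would work fiberwise throughout, so that the two inequalities follow at each point and the integral statement then follows by integrating the pointwise bound.

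The key step is to specialize the formula in Lemma \ref{eigen} to forms of bidegree $(n,q)$. Write $g(x) = \sum_{|K|=q} g_K\, \zeta_J^\star \wedge \bar\zeta_K^\star$ with $J = \{1, \ldots, n\}$ the full index set. Because the holomorphic multiindex runs over all of $\{1, \ldots, n\}$, the expression $\sum_{j \in J}\lambda_j - \sum_{1 \leq j \leq n}\lambda_j$ appearing in Lemma \ref{eigen} collapses to zero, so the commutator $[\theta,\Lambda_\omega]$ acts on $\zeta_J^\star \wedge \bar\zeta_K^\star$ as multiplication by the simple partial sum $\sum_{j\in K}\lambda_j$. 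Hence at each point
$$\langle [\theta, \Lambda_\omega] g, g \rangle_\omega = \sum_{|K|=q} \Bigl(\sum_{j \in K} \lambda_j\Bigr)\, |g_K|^2.$$
Since $\lambda_1 \leq \cdots \leq \lambda_n$, every sum of $q$ eigenvalues satisfies $\sum_{j \in K}\lambda_j \geq \lambda_1 + \cdots + \lambda_q$, and using $|g|_\omega^2 = \sum_{|K|=q} |g_K|^2$ gives the first inequality directly.

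For the second inequality, the hypothesis $\lambda_1 + \cdots + \lambda_q > 0$ guarantees $\sum_{j\in K}\lambda_j > 0$ for every $K$ with $|K|=q$, so $[\theta,\Lambda_\omega]$ is pointwise positive-definite on $(n,q)$-forms and its inverse is diagonal in the same basis with eigenvalues $1/\sum_{j\in K}\lambda_j$, each bounded above by $1/(\lambda_1 + \cdots + \lambda_q)$. Expanding $g$ in the adapted basis and integrating against $dV_\omega$ then delivers the claimed integral bound.

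The main obstacle here is more bookkeeping than analysis: one must correctly track which terms in Lemma \ref{eigen} survive in the case of top holomorphic degree, and observe that the sign hypothesis on the first $q$ eigenvalues is precisely what is needed to invert the commutator on $(n,q)$-forms. Once the pointwise eigenvalue description is in hand, both inequalities reduce to an elementary estimation on a diagonal operator, and the integrated version is immediate since the bound $1/(\lambda_1 + \cdots + \lambda_q)$ depends only on $x$.
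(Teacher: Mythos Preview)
Your proof is correct and is exactly the standard argument: diagonalize via Lemma \ref{eigen}, observe that on $(n,q)$-forms the commutator acts diagonally with eigenvalues $\sum_{j\in K}\lambda_j$, and use that the smallest such partial sum is $\lambda_1+\cdots+\lambda_q$. The paper does not supply its own proof of this lemma---it simply cites \cite[Chapter VIII-(6.4)]{Dm12}---and what you have written is essentially the proof found there.
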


\begin{lemma}\label{Twedge}
 Let $(X,\omega)$ be an $n$-dimensional Hermitian manifold and  $\theta$  a continuous $(1,0)$-form. Then for any $(n, q)$-form $\alpha$, we have
\[
\left[\sqrt{-1} \theta \wedge \bar{\theta}, \Lambda_{\omega}\right] \alpha=T_{\bar{\theta}} T_{\bar{\theta}}^* \alpha,
\]
where $T_{\bar{\theta}}$ denotes $\bar{\theta}\wedge\bullet$.
\end{lemma}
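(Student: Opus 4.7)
The identity is pointwise, so the plan is to fix an arbitrary point $x\in X$ and verify the equality of operators acting on $\Lambda^{n,q}T^\star_{X,x}$. Since both sides depend $\mathbb{C}$-linearly on $\theta$ and $\bar\theta$, I will reduce to a normal form by an appropriate choice of unitary frame.

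First, choose an $\omega$-orthonormal basis $(\zeta_1,\dots,\zeta_n)$ of $T^{1,0}_{X,x}$ such that the $(1,0)$-form $\theta(x)$ is proportional to the first dual covector, i.e.\ $\theta(x)=c\,\zeta_1^\star$ for some $c\in\mathbb{C}$. This is possible because $\theta(x)$ is a single covector, which can always be rotated to a chosen coordinate direction by a unitary transformation. With this choice,
\[
\sqrt{-1}\theta\wedge\bar\theta=\sqrt{-1}\,|c|^2\,\zeta_1^\star\wedge\bar\zeta_1^\star,
\]
so the eigenvalues of $\sqrt{-1}\theta\wedge\bar\theta$ with respect to $\omega$ at $x$ are $|c|^2,0,\dots,0$.

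Next, I apply Lemma \ref{eigen} to $\gamma:=\sqrt{-1}\theta\wedge\bar\theta$. For an $(n,q)$-form $\alpha=\sum_{|K|=q}\alpha_K\,\zeta_{\{1,\dots,n\}}^\star\wedge\bar\zeta_K^\star$, the index set $J=\{1,\dots,n\}$ is always full, so $\sum_{j\in J}\gamma_j=\sum_{j}\gamma_j=|c|^2$ and these contributions cancel in the formula of Lemma \ref{eigen}. Only the sum over $K$ survives, yielding
\[
[\sqrt{-1}\theta\wedge\bar\theta,\Lambda_\omega]\alpha
=|c|^2\sum_{K\ni 1}\alpha_K\,\zeta_{\{1,\dots,n\}}^\star\wedge\bar\zeta_K^\star.
\]

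Finally, I compute the right-hand side directly in the same basis. Since $T_{\bar\theta}=\bar c\,\bar\zeta_1^\star\wedge(\cdot)$, its formal adjoint with respect to the pointwise Hermitian inner product is the contraction $T_{\bar\theta}^\star=c\,\iota_{\bar\zeta_1}$ where $\iota_{\bar\zeta_1}$ denotes interior product with the dual vector of $\bar\zeta_1^\star$. Hence $T_{\bar\theta}T_{\bar\theta}^\star\alpha=|c|^2\,\bar\zeta_1^\star\wedge\iota_{\bar\zeta_1}\alpha$, and a direct calculation on each monomial $\zeta_{\{1,\dots,n\}}^\star\wedge\bar\zeta_K^\star$ shows this wedge-contraction pair acts as the identity when $1\in K$ and as zero when $1\notin K$. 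Matching this against the expression above proves the equality at $x$, and since $x$ was arbitrary, the lemma follows. The computation is elementary and there is no real obstacle; the only care needed is in verifying that the signs in the wedge/contraction pair do not introduce sign errors on $(n,q)$-forms, which works out because the holomorphic part $\zeta_{\{1,\dots,n\}}^\star$ is unaffected and the antiholomorphic part involves the standard $\bar\zeta_1^\star\wedge\iota_{\bar\zeta_1}$ projector.
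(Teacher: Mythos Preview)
Your argument is correct: the pointwise reduction to a unitary frame in which $\theta=c\,\zeta_1^\star$, the application of Lemma~\ref{eigen} to compute the commutator on $(n,q)$-forms (where the $J$-sum and the full sum cancel, leaving only the $K$-sum), and the identification of $T_{\bar\theta}T_{\bar\theta}^*$ with $|c|^2\,\bar\zeta_1^\star\wedge\iota_{\bar\zeta_1}$ all check out, including the sign bookkeeping when the contraction passes through the holomorphic $(n,0)$-factor. The paper itself does not spell out a proof but simply refers to \cite[Lemma~4.2]{GZ15b}; your self-contained computation via Lemma~\ref{eigen} is the natural route and almost certainly what that reference does as well.
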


\begin{proof}
The proof is the same as that of \cite[Lemma 4.2]{GZ15b}.
\end{proof}

From classical knowledge about  matrices (e.g.,  \cite{L07}), we can easily get the following  result.
\begin{lemma}\label{matrix}
Assume that $A$ and $B$ are Hermitian matrices of $n\times n$ and both of them are positive definite. Then $A-B>0$ implies that $B^{-1}-A^{-1}>0$.
\end{lemma}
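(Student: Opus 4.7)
The plan is to reduce to a diagonal situation by simultaneously diagonalizing the two positive definite Hermitian matrices $A$ and $B$ via a single congruence transformation, and then to read off the comparison of the inverses directly. Concretely, I would form the Hermitian positive square root $B^{1/2}$, set $M := B^{-1/2} A B^{-1/2}$, and diagonalize this Hermitian matrix by a unitary $U$, say $U^{*}MU = \Lambda = \mathrm{diag}(\lambda_{1},\ldots,\lambda_{n})$. Putting $P := B^{-1/2} U$ then yields an invertible $P$ with $P^{*}BP = I$ and $P^{*}AP = \Lambda$.

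Next I would invoke the hypothesis: since congruence by the invertible matrix $P$ preserves positive definiteness, $A - B > 0$ forces
\[
P^{*}(A-B)P \;=\; \Lambda - I \;>\; 0,
\]
i.e., every eigenvalue $\lambda_{j}$ of $M$ is strictly greater than $1$. Consequently $0 < \lambda_{j}^{-1} < 1$ for each $j$, so $I - \Lambda^{-1} > 0$.

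Finally, inverting the two congruence identities above gives $B^{-1} = PP^{*}$ and $A^{-1} = P\Lambda^{-1}P^{*}$, whence
\[
B^{-1} - A^{-1} \;=\; P\bigl(I - \Lambda^{-1}\bigr)P^{*} \;>\; 0,
\]
again because congruence by $P$ preserves positive definiteness. There is essentially no obstacle in this proof; the only bookkeeping point worth flagging is to arrange the simultaneous diagonalization so that $B$ (not $A$) sits at the identity, since that is precisely what makes the inversion step at the end transparent.
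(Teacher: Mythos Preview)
Your argument is correct: the simultaneous congruence diagonalization via $P=B^{-1/2}U$ cleanly reduces the question to the scalar fact that $\lambda>1$ implies $\lambda^{-1}<1$, and the inversion step $B^{-1}=PP^{*}$, $A^{-1}=P\Lambda^{-1}P^{*}$ is computed correctly.

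There is nothing to compare against, since the paper does not supply a proof of this lemma; it simply records the statement as a classical fact about matrices and points to a linear algebra textbook. Your write-up is therefore more explicit than what the paper provides.
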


From Lemma \ref{matrix}, we can easily conclude the following comparison theorem.

\begin{lemma}\label{inversion of order}
Let $\theta_1$ and $\theta_2$ be  smooth real $(1,1)$-forms on an $n$-dimensional Hermitian manifold $(X, \omega)$. Assume that $(\theta_1-\theta_2)\wedge \omega^r>0$, $\theta_1\wedge \omega^r>0$ and $\theta_2\wedge \omega^r>0$. Then
$$[\theta_2,\blambda_{\omega}]^{-1}-[\theta_1,\blambda_{\omega}]^{-1}$$ is positive definite on $(n,r+1)$-forms.
\end{lemma}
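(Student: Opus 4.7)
The plan is to reduce Lemma \ref{inversion of order} to the pointwise matrix comparison in Lemma \ref{matrix}, exploiting the well-known fact that $[\theta, \Lambda_{\omega}]$ is a self-adjoint operator preserving form bidegree, whose positivity on $(n,r+1)$-forms is controlled by the sum of the smallest $r+1$ eigenvalues of $\theta$ with respect to $\omega$.

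First I would fix an arbitrary point $x\in X$ and pass to the finite-dimensional Hermitian inner product space $\Lambda^{n,r+1}T^{\star}_{X,x}$ with the inner product $\langle\cdot,\cdot\rangle_{\omega}$. On this space, set
\[
A:=[\theta_1,\Lambda_{\omega}]\big|_x,\qquad B:=[\theta_2,\Lambda_{\omega}]\big|_x.
\]
Both $A$ and $B$ are Hermitian endomorphisms because $\theta_1,\theta_2$ are real $(1,1)$-forms and $\Lambda_{\omega}$ is the formal adjoint of $L_{\omega}=\omega\wedge\bullet$. By the linearity of the commutator bracket in its first slot, one also has
\[
A-B=[\theta_1-\theta_2,\Lambda_{\omega}]\big|_x.
\]

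Next I would invoke the equivalence recorded just after \eqref{pos} in the introduction (together with Lemmata \ref{eigen} and \ref{less than q eigen}): a smooth real $(1,1)$-form $\theta$ with $\theta\wedge\omega^r>0$ satisfies $\langle[\theta,\Lambda_{\omega}]\beta,\beta\rangle_{\omega}>0$ for every nonzero $(n,r+1)$-form $\beta$ at $x$. Applying this to $\theta_1$, $\theta_2$, and $\theta_1-\theta_2$ respectively yields that $A$, $B$, and $A-B$ are all positive definite Hermitian endomorphisms of $\Lambda^{n,r+1}T^{\star}_{X,x}$.

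Finally, since $A>B>0$ as Hermitian matrices, Lemma \ref{matrix} gives $B^{-1}-A^{-1}>0$, that is, $[\theta_2,\Lambda_{\omega}]^{-1}-[\theta_1,\Lambda_{\omega}]^{-1}$ is positive definite on $(n,r+1)$-forms at $x$. As $x\in X$ was arbitrary, this completes the proof. The only conceivable obstacle is bookkeeping, namely confirming that the quoted equivalence of the two notions of positivity is exactly what we need to conclude pointwise positive-definiteness of $A$, $B$, and $A-B$ on $(n,r+1)$-forms; this is already furnished by the discussion following \eqref{pos} and by Lemmata \ref{eigen}--\ref{less than q eigen}, so no new analytic input is required beyond assembling these ingredients.
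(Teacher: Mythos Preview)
Your proposal is correct and follows exactly the route the paper intends: the paper does not spell out a proof but simply says ``From Lemma \ref{matrix}, we can easily conclude the following comparison theorem,'' and the subsequent remark makes clear that the argument proceeds pointwise by viewing $[\theta_i,\Lambda_\omega]$ as positive definite Hermitian operators on $(n,r+1)$-forms via the equivalence after \eqref{pos}. Your write-up is precisely the missing details of that sketch.
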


\rem The above lemma is  a little bit different from and stronger than the classical result about the non-increasing of $[\theta,\blambda_{\omega}]^{-1}$ with respect to $\theta$ (see  \cite[Lemma 3.2]{Dm82} or \cite[Propositon 5.2]{B18}). Here we know nothing about the  comparison information between $\theta_1$ and $\theta_2$ and only know the  comparison data between  $\theta_1\wedge \omega^r$ and $\theta_2\wedge \omega^r$. However, from a point of view of positive definite  transformation we can easily get the above lemma.

The following observation is useful to give a direct proof of Proposition \ref{is ambient extension}.
\begin{lemma}[{\cite[p. 609]{GZ15a}, or \cite[Lemma 9.20]{B18}}]\label{integrability}
If $g$ is an integrable function near $0 \in \mathbb{R}^{d},$ then there exists a sequence $x_{j} \rightarrow 0$ in $\mathbb{R}^{d}$ such that $\left|g\left(x_{j}\right)\right|=o\left(\left|x_{j}\right|^{-d}\right)$.
\end{lemma}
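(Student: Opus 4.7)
The plan is a short proof by contradiction that reduces the claim to the divergence of the one-variable integral $\int_{0} dr/r$ via polar coordinates.

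First I would negate the conclusion and unwind what it says. Suppose no sequence $x_j\to 0$ with $|g(x_j)|=o(|x_j|^{-d})$ exists, and set $f(x):=|g(x)||x|^d$. I claim $\mathrm{ess\,liminf}_{x\to 0} f(x)>0$: otherwise, for each positive integer $j$ the set $\{0<|x|<1/j,\ f(x)<1/j\}$ would have positive measure, and choosing a representative $x_j$ in it (which exists off any fixed null set) would yield a sequence $x_j\to 0$ with $f(x_j)\to 0$, contradicting the negation. Hence there exist constants $\varepsilon>0$ and $r_0>0$ such that $|g(x)|\geq \varepsilon|x|^{-d}$ for almost every $x$ in the punctured ball $\{0<|x|<r_0\}$.

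Next I would integrate this lower bound in spherical coordinates $x=r\sigma$, $\sigma\in S^{d-1}$, $dx=r^{d-1}\,dr\,d\sigma$:
\[
\int_{|x|<r_0}|g(x)|\,dx \;\geq\; \varepsilon\int_{|x|<r_0}|x|^{-d}\,dx \;=\; \varepsilon\,\mathrm{vol}(S^{d-1})\int_0^{r_0}\frac{dr}{r} \;=\;+\infty,
\]
which contradicts the hypothesis that $g$ is integrable near $0$. Therefore a sequence $x_j\to 0$ with $|g(x_j)||x_j|^d\to 0$ must exist, which is precisely the desired statement.

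The only delicate step, which I would write out carefully, is the measure-theoretic passage from ``no good sequence $x_j$ exists'' to the pointwise a.e.\ lower bound $|g(x)|\geq \varepsilon|x|^{-d}$; the remainder is the routine polar-coordinate computation above. Since $g$ is only defined up to a null set, one has to select the points $x_j$ outside the exceptional set, but this is automatic because any set of positive Lebesgue measure has most of its points lying outside any fixed null set. Once this bookkeeping is in place, the divergence of $\int_0 dr/r$ immediately delivers the contradiction.
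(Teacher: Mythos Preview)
Your argument is correct. The paper does not supply its own proof of this lemma; it merely cites \cite{GZ15a} and \cite{B18} for the statement, so there is nothing in the text to compare against beyond confirming that your contradiction via the non-integrability of $|x|^{-d}$ (equivalently, the divergence of $\int_0 dr/r$ in polar coordinates) is the standard route and matches what those references do. Your bookkeeping about selecting $x_j$ off a fixed null set is exactly the care needed to make the negation step rigorous when $g$ is only an $L^1$ class.
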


\section{Proof of Theorem \ref{ambient extension}}\label{proof}
We  split the proof of Theorem \ref{ambient extension} in several steps.
\subsection{Construction of a smooth extension $\widetilde{f}_{\infty}$}\label{sect0.1}
Let \{$W_{\alpha}$\} be the Stein coordinate patches  of $X$, biholomorphic to polydiscs, and admit the following property: if we denote the corresponding coordinates by $\left(z_{\alpha}, w_{\alpha}\right) \in \Delta \times \Delta^{n-1},$ where $w_{\alpha}=\left(w_{\alpha}^{1}, \ldots, w_{\alpha}^{n-1}\right),$ then $W_{\alpha} \cap Y=\left\{z_{\alpha}=0\right\}$.
On each $W_{\alpha},$ we fix some holomorphic $\sigma_{\alpha} \in \Gamma\left(W_{\alpha}, K_{X} \otimes L\right)$ to trivialize
$K_{X} \otimes L$. Let  \{$\theta_{\alpha}$\} be a partition of unity
subordinate to \{$W_{\alpha}$\}. For arbitrary $\alpha$, \eqref{assone},\eqref{asstwo},\eqref{assthree} and $-\sqrt{-1}\operatorname{Ricci}(\omega)=\Theta(K_X)$ (the Chern curvature form of $K_X$) along with Theorem \ref{mv Ambient} can conclude that there exists an ambient $\dbar$-closed extension $f_{\alpha}$
of $f$ on $W_{\alpha}$. Set $\widetilde{f}_{\infty}: = \sum_{\alpha}\theta_{\alpha}\cdot f_{\alpha}$. Then
$$\dbar\widetilde{f}_{\infty}=\dbar\sum_{\alpha}\theta_{\alpha}\cdot (f_{\alpha}-f_{\beta}) = \sum_{\alpha}\dbar\theta_{\alpha}\cdot(f_{\alpha}-f_{\beta})\text { on } W_{\beta}.$$
So $\dbar\widetilde{f}_{\infty} = 0$ along $Y$. Note that $\widetilde{f}_{\infty}$ can also be obtained by the same method as \cite[(4.4)]{Dm00} or \cite[Proof of Lemma 3.1]{Koz11}.

For the nice regularity  of $g_{\vep}$ in Subsection \ref{subs-3.3}, we need some regularity of $\dbar\widetilde{f}_{\infty}$ twisted by $s^{-1}$. On  this, Koziarz \cite[Lemma 3.1]{Koz11} had provided us with
a useful trick to raise the regularity of $s^{-1}\dbar\widetilde{f}_{\infty}$ and still preserve the ambient extension property of $\widetilde{f}_{\infty}$  as follows.

We proceed by induction to get the regularity of $s^{-1}\dbar \widetilde{f}_{\infty}$. One says that $\widetilde{f}_{\infty}$ enjoys \emph{the property $\left(P_{k}\right)$} for $k \geq 1$, if on each $W_{\alpha}$
\begin{equation}\label{lifting}
\begin{aligned}
\dbar \widetilde{f}_{\infty}=&\ z_{\alpha} \widetilde{f}_{\alpha}\left(z_{\alpha}, w_{\alpha}\right)+\bar{z}_{\alpha}^{k}\left[d \bar{z}_{\alpha} \wedge \sum_{|I|=q} a_{I}\left(w_{\alpha}\right) \sigma_{\alpha} d \bar{w}_{\alpha}^{I}+\sum_{\left|I^{\prime}\right|=q+1} b_{I^{\prime}}\left(w_{\alpha}\right) \sigma_{\alpha} d \bar{w}_{\alpha}^{I^{\prime}}\right] \\
&+\bar{z}_{\alpha}^{k+1} h_{\alpha}\left(z_{\alpha}, w_{\alpha}\right)
\end{aligned}
\end{equation}
for some $\widetilde{f}_{\alpha}, h_{\alpha} \in \mathcal{E}^{\infty}\left(W_{\alpha}, \Lambda^{n, q+1} T_{X}^{\star} \otimes L\right)$ and  $a_{I}, b_{I^{\prime}} \in \mathcal{E}^{\infty}\left(\Delta^{n-1}, \mathbb{C}\right)$
with the increasing multi-indices $I, I^{\prime}$.

Note that for $k\geq 2$, $\frac{\bar{z}^k}{z}$ is of class $\mathcal{E}^{k-2}$ on a complex plane, where $z$ is the complex analytic coordinate.
 Then $\widetilde{f}_{\infty}$ enjoying \emph{the property $\left(P_{k}\right)$}  implies that
$$s^{-1} \dbar \widetilde{f}_{\infty} \in \mathcal{E}^{k-2}\left(X, \Lambda^{n,q+1} T_{X}^{\star} \otimes\right.\left.L \otimes \mathcal{O}_{X}(-Y)\right),\quad \text{for $k \geq 2$}.$$

In  mathematical analysis, one has an observation in the spirit of the Taylor expansion that, a smooth function $f$ of one real variable  which vanishes at the origin can be written as $f(x)=x\cdot g(x)$ for a smooth function $g$. It can be proved by the L'Hospital's rule to show that $g$ is of class $\mathcal{E}^{k}$ for any positive $k$.
Then since  $\dbar\widetilde{f}_{\infty}=0$ along $Y\cap {W_{\alpha}}=\{z_{\alpha}=0\}$ and $\dbar\widetilde{f}_{\infty}$ is smooth, one can   expand $\dbar\widetilde{f}_{\infty}$ along the coordinate function $z_{\alpha}$ such that
$$\dbar\widetilde{f}_{\infty}=z_{\alpha}\cdot g_1+\bar{z}_{\alpha}\cdot g_2,$$
where $g_1$ and $g_2$ are the corresponding smooth forms. So expanding $g_2$  along the coordinate function $z_{\alpha}$ again similarly can conclude that $\widetilde{f}_{\infty}$ enjoys \emph{the property $\left(P_{1}\right)$}.

A direct calculation shows
$$
\begin{aligned}
\dbar\left(\dbar \widetilde{f}_{\infty}\right)=&\ z_{\alpha} \dbar \widetilde{f}_{\alpha}\left(z_{\alpha}, w_{\alpha}\right)+k \bar{z}_{\alpha}^{k-1} d \bar{z}_{\alpha} \wedge\left[\sum_{\left|I^{\prime}\right|=q+1} b_{I^{\prime}}\left(w_{\alpha}\right) \sigma_{\alpha} d \bar{w}_{\alpha}^{I^{\prime}}\right] \\
&+\bar{z}_{\alpha}^{k} h_{\alpha}^{\prime}\left(z_{\alpha}, w_{\alpha}\right)
\end{aligned}
$$
for some $h_{\alpha}^{\prime} \in \mathcal{E}^{\infty}\left(W_{\alpha}, \Lambda^{n, q+2} T_{X}^{\star} \otimes L\right)$.  Then all the above $b_{I^{\prime}}$   vanish identically as  $\dbar\left(\dbar \widetilde{f}_{\infty}\right)=0$.
So we take
$$
\widetilde{f}_{\infty}^{\prime}=\widetilde{f}_{\infty}-\sum_{\alpha} \theta_{\alpha} \frac{\bar{z}_{\alpha}^{k+1}}{k+1} \sum_{|I|=q} a_{I}\left(w_{\alpha}\right) \sigma_{\alpha} d \bar{w}_{\alpha}^{I}
$$
to get
$$
\begin{aligned}
\dbar \widetilde{f}_{\infty}^{\prime}=&\ \dbar \widetilde{f}_{\infty}-\sum_{\alpha}\left(\frac{\bar{z}_{\alpha}^{k+1}}{k+1} \dbar \theta_{\alpha}+\theta_{\alpha} \bar{z}_{\alpha}^{k} d \bar{z}_{\alpha}\right) \wedge \sum_{|I|=q} a_{I}\left(w_{\alpha}\right) \sigma_{\alpha} d \bar{w}_{\alpha}^{I} \\
&\ -\sum_{\alpha} \theta_{\alpha} \frac{\bar{z}_{\alpha}^{k+1}}{k+1} \dbar\left(\sum_{|I|=q} a_{I}\left(w_{\alpha}\right) \sigma_{\alpha} d \bar{w}_{\alpha}^{I}\right) \\
=&\ \sum_{\alpha} \theta_{\alpha}\left(\dbar \widetilde{f}_{\infty}-\bar{z}_{\alpha}^{k} d \bar{z}_{\alpha} \wedge \sum_{|I|=q} a_{I}\left(w_{\alpha}\right) \sigma_{\alpha} d \bar{w}_{\alpha}^{I}\right)+\sum_{\alpha} \bar{z}_{\alpha}^{k+1} h_{\alpha}^{\prime \prime}\left(z_{\alpha}, w_{\alpha}\right) \\
=&\ \sum_{\alpha} z_{\alpha} \theta_{\alpha} \widetilde{f}_{\alpha}\left(z_{\alpha}, w_{\alpha}\right)+\sum_{\alpha} \bar{z}_{\alpha}^{k+1}\left(\theta_{\alpha} h_{\alpha}\left(z_{\alpha}, w_{\alpha}\right)+h_{\alpha}^{\prime \prime}\left(z_{\alpha}, w_{\alpha}\right)\right)
\end{aligned}
$$
for some $h_{\alpha}^{\prime \prime} \in \mathcal{E}_{0}^{\infty}\left(W_{\alpha}, \Lambda^{n,q+1} T_{X}^{\star} \otimes L\right)$.  Then  $\dbar\widetilde{f}_{\infty}^{\prime}$ satisfies \eqref{lifting} for $k+1$ on each $W_{\alpha}$, possibly after some transformations of coordinates. Then
$\widetilde{f}_{\infty}^{\prime}$ enjoys \emph{the property $\left(P_{k+1}\right)$}. Apparently, $\widetilde{f}_{\infty}^{\prime}$ is still the ambient extension of $f$ while $\dbar\widetilde{f}_{\infty}^{\prime}$ still vanishes along $Y$.

In conclusion, we have proved the following results.
\begin{prop}\label{tilde{f}}
For any $k\geq 0$, there exists a smooth section
$$\widetilde f_\infty\in \mathcal E^\infty(X,\Lambda^{n,q}T_X^\star\otimes L)$$
such that
\begin{enumerate}[{\rm (}a{\rm )}]
\item \label{finfty=f} $\widetilde f_\infty$ is the ambient extension of $f$,
\item $\bar\partial\widetilde f_\infty=0$ at every point of $Y$,
\item \label{tilde{f}.d}
$s^{-1} \bar\partial\widetilde f_\infty\in \mathcal {E}^k(X,\Lambda^{n,q+1}T^\star_ X \otimes L\otimes \mathcal{O}_{X}(-Y))=\mathcal {E}^k(X,\Lambda^{n,q+1}T^\star_ X \otimes L\otimes E^*)$.
\end{enumerate}
\end{prop}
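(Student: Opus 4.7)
The plan is to construct $\widetilde{f}_\infty$ in two stages: first build a naive ambient extension by patching together local extensions via a partition of unity, then iteratively correct it to improve the order of vanishing of $\bar\partial\widetilde{f}_\infty$ along $Y$, thereby boosting the regularity of $s^{-1}\bar\partial\widetilde{f}_\infty$.

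For the initial construction, I would cover $X$ by Stein coordinate patches $\{W_\alpha\}$ with coordinates $(z_\alpha,w_\alpha)$ in which $Y\cap W_\alpha=\{z_\alpha=0\}$, trivialize $K_X\otimes L$ by a local holomorphic frame $\sigma_\alpha$, and apply the Stein ambient extension theorem (Theorem \ref{mv Ambient}) on each $W_\alpha$ to produce a $\bar\partial$-closed ambient extension $f_\alpha$ of $f$. This uses the hypotheses \eqref{assone}–\eqref{assthree} together with the identity $-\sqrt{-1}\operatorname{Ricci}(\omega)=\Theta(K_X)$, absorbed into the weight by tensoring with $K_X$. Gluing $\widetilde{f}_\infty:=\sum_\alpha\theta_\alpha f_\alpha$ via a subordinate partition of unity immediately gives (a); and since each difference $f_\alpha-f_\beta$ vanishes on $Y$, the identity $\bar\partial\widetilde{f}_\infty=\sum_\alpha\bar\partial\theta_\alpha\cdot(f_\alpha-f_\beta)$ on $W_\beta$ gives (b).

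The heart of the matter is (c). I would introduce the local property $(P_k)$ as in \eqref{lifting}: on every chart $W_\alpha$, $\bar\partial\widetilde{f}_\infty$ splits into a part divisible by $z_\alpha$, a ``middle'' part of the form $\bar z_\alpha^k[d\bar z_\alpha\wedge(\cdots)+(\cdots)]$, and a remainder divisible by $\bar z_\alpha^{k+1}$. Since $\bar z^k/z$ lies in $\mathcal{E}^{k-2}$ on the complex plane, $(P_k)$ implies that $s^{-1}\bar\partial\widetilde{f}_\infty$ has $\mathcal{E}^{k-2}$ regularity, so it suffices to reach $(P_{k+2})$. The base case $(P_1)$ follows from a Hadamard-type expansion: because $\bar\partial\widetilde{f}_\infty$ is smooth and vanishes along $\{z_\alpha=0\}$, I can write it as $z_\alpha g_1+\bar z_\alpha g_2$, and expanding $g_2$ once more in $z_\alpha$ produces the required form. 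For the inductive step, I differentiate \eqref{lifting} once more and use $\bar\partial^2\widetilde{f}_\infty=0$ to force the $b_{I'}$ coefficients in the $(q+1)$-covector block to vanish identically; the surviving obstruction is the $d\bar z_\alpha\wedge\sum a_I d\bar w_\alpha^I$ term, which I kill by replacing $\widetilde{f}_\infty$ with $\widetilde{f}_\infty':=\widetilde{f}_\infty-\sum_\alpha\theta_\alpha\frac{\bar z_\alpha^{k+1}}{k+1}\sum_{|I|=q}a_I(w_\alpha)\sigma_\alpha d\bar w_\alpha^I$. A direct computation, using $\bar\partial(\bar z_\alpha^{k+1}/(k+1))=\bar z_\alpha^k d\bar z_\alpha$, shows that the correction absorbs the middle term and contributes only extra $\bar z_\alpha^{k+1}$-order pieces, so $\widetilde{f}_\infty'$ satisfies $(P_{k+1})$. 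Moreover, the correction term vanishes to order $k+1\ge 2$ along $Y$, so it is still an ambient extension of $f$ and still satisfies $\bar\partial\widetilde{f}_\infty'=0$ along $Y$.

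The main obstacle I anticipate is verifying that the inductive correction is globally well-defined and compatible with the ambient-extension property: the coefficients $a_I(w_\alpha)$ and the frames $\sigma_\alpha$ are chart-dependent, and the formal $\bar z_\alpha^{k+1}/(k+1)$ factor is not intrinsic. I would handle this by noting that because the correction is a $\theta_\alpha$-weighted sum that vanishes to order $k+1$ on $Y$, its failure to transform tensorially is absorbed into the still-unknown smooth remainder terms, so after possibly reshuffling coordinates the new section again satisfies a local expansion of the form \eqref{lifting}. Iterating the inductive step $k+2$ times, starting from $(P_1)$, produces the section asserted in the proposition.
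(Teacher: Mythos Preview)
Your proposal is correct and follows essentially the same approach as the paper: the paper constructs $\widetilde f_\infty$ by gluing local ambient extensions obtained from Theorem~\ref{mv Ambient} via a partition of unity, then formulates the same property $(P_k)$ of \eqref{lifting}, establishes $(P_1)$ by the same Hadamard/Taylor-type expansion, and performs the identical inductive correction $\widetilde f_\infty' = \widetilde f_\infty - \sum_\alpha \theta_\alpha \frac{\bar z_\alpha^{k+1}}{k+1}\sum_{|I|=q} a_I(w_\alpha)\sigma_\alpha\, d\bar w_\alpha^I$ after using $\bar\partial^2\widetilde f_\infty=0$ to kill the $b_{I'}$. Your anticipated obstacle about global coherence of the correction is handled exactly as you suggest (the $\theta_\alpha$-weighting and the high-order vanishing absorb chart-dependence into the remainder), and the paper likewise remarks that $\widetilde f_\infty'$ remains an ambient extension with $\bar\partial\widetilde f_\infty'=0$ along $Y$.
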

From now on we fix $k\geq n+6$, where $n$ is the complex dimension of $X$.
\subsection{Construction of special weights and twist factors}\label{sub-3.2}
For the sake of completeness, we write the following constructions concretely, which  is almost the same as \cite[$\S$ 4.1]{M-V19}.

Set
\[
e^{-\psi}:=e^{-\varphi+\lambda}.
\]
Next we turn to the choices of the functions $A$ and $\tau$ as in \cite{Va08,M-V07} and more originally \cite{M-V07}.

Let
$$h(x):=2-x+\log \left(2 e^{x-1}-1\right), \quad v:=\log |s|^2 \quad \text{and} \quad a:=\gamma-\delta \log \left(|s|^2+\varepsilon^{2}\right),$$
where $0<\delta\leq 1$ is as in the main theorems, $x > 1$, and $\gamma>1$ is a real number such that $a>1$. Note that
\begin{equation}\label{ine gamma}
a\geq\gamma-\delta\log(1+\varepsilon^2)\geq\gamma-\delta\varepsilon^2
\end{equation}
due to \eqref{assthree}. That is to say, for the fixed $\gamma>1$, there exists $\vep_0>0$ such that $a-1$ has a positive lower bound which is independent of $\vep$ as $\vep<\vep_0$. It is easy to see that
$$\label{h'}
h'(x) = (2e^{x-1}-1)^{-1} \in (0,1) \quad \text{and} \quad h''(x) = \frac{-2e^{x-1}}{(2e^{x-1}-1)^2} < 0.
$$
Define
\[
\tau:=a+h(a) \quad \text { and } \quad A:=\frac{\left(1+h^{\prime}(a)\right)^{2}}{-h^{\prime \prime}(a)}.
\]
Then $A = 2e^{a-1}$. Furthermore, \eqref{ine gamma} gives
\begin{equation}\label{tau-h'}
\tau-(1+h'(a))>\sigma'
\end{equation}
for some constant $\sigma'>0$ which is independent of $\vep$ when $\vep<\vep_0$.
Moreover, these choices guarantee that
$$-\partial \bar{\partial} \tau-A^{-1} \partial \tau \wedge \bar{\partial} \tau=\left(1+h^{\prime}(a)\right)(-\partial \bar{\partial} a).$$
Finally, a straightforward calculation yields
\[
\begin{aligned}
-\partial \bar{\partial} a &=\delta \partial \bar{\partial} \log \left(e^v+\varepsilon^{2}\right) \\
&=\frac{\delta |s|^2}{|s|^2+\varepsilon^{2}} \partial \bar{\partial} v+\frac{4 \delta \varepsilon^{2} \partial|s| \wedge \bar{\partial}|s|}{\left(|s|^2+\varepsilon^{2}\right)^{2}} \\
&=-\delta \frac{|s|^2}{|s|^2+\varepsilon^{2}} \partial \bar{\partial} \lambda+\frac{4 \delta \varepsilon^{2} \partial|s| \wedge \bar{\partial}|s|}{\left(|s|^2+\varepsilon^{2}\right)^{2}},
\end{aligned}
\]
where the last equality follows from the Lelong--Poincar\'e equation and   $|s|^2[Y]=0$ due to $\operatorname{Supp}[Y]=Y$.

A direct calculation together with \eqref{assone}, \eqref{asstwo} and  \eqref{tau-h'} yields
\begin{equation} \label{adhoc}
  \begin{split}
 &\ \sqrt{-1}\left(\tau(\partial \bar{\partial} \psi) -\partial \bar{\partial} \tau-A^{-1} \partial \tau \wedge \bar{\partial} \tau\right) \wedge \omega^{q}\\
=&\ \sqrt{-1}\left(\tau\partial \bar{\partial}\left(\varphi-\lambda\right)+\left(1+h^{\prime}(a)\right)(-\partial \bar{\partial} a)\right) \wedge \omega^{q} \\
=&\ \left(\tau-\left(1+h^{\prime}(a)\right)\left(\frac{|s|^2}{|s|^2+\varepsilon^{2}}\right)\right) \cdot\sqrt{-1}\partial \bar{\partial}\left(\varphi-\lambda\right) \wedge \omega^{q} \\
&\ +\sqrt{-1}\left(1+h^{\prime}(a)\right) \frac{|s|^2}{|s|^2+\varepsilon^{2}}\left(\partial \bar{\partial}\left(\varphi-\lambda\right)-\delta \partial \bar{\partial} \lambda\right) \wedge \omega^{q}\\
&\ +\sqrt{-1}\left(1+h^{\prime}(a)\right)\left(\frac{4\delta\varepsilon^2\partial|s|\wedge\dbar|s|}{(|s|^2+\varepsilon^2)^2}\right)\wedge\omega^{q}\\
>&\ \sqrt{-1}\delta\left(\frac{4\varepsilon^2\partial|s|\wedge\dbar|s|}{(|s|^2+\varepsilon^2)^2}\right)\wedge\omega^{q}+\sigma'\sigma
\omega\wedge\omega^q\\
>&\ 0.
  \end{split}
\end{equation}

Set
$$B_{\vep}:=\sqrt{-1}\left(\tau(\partial \bar{\partial} \psi) -\partial \bar{\partial} \tau-A^{-1} \partial \tau \wedge \bar{\partial} \tau\right).$$
Then \eqref{adhoc} tells us that
\begin{equation}\label{B^1}
B_{\vep}\wedge\omega^q>\sqrt{-1}\delta\left(\frac{4\varepsilon^2\partial|s|\wedge\dbar|s|}{(|s|^2+\varepsilon^2)^2}\right)\wedge\omega^{q}
\end{equation}
and
\begin{equation}\label{B^2}
B_{\vep}\wedge\omega^q\geq \sigma'\sigma \omega\wedge\omega^q
\end{equation}
hold on $X$.

\subsection{Solving twisted $\dbar$-Laplace equations with estimates}\label{subs-3.3}
Recall that in Theorem \ref{ambient extension}, $f$ is a smooth section of the vector bundle $\left.\left(K_{X} \otimes L \otimes \wedge^{0, q} T_{X}^{\star}\right)\right|_{Y} \rightarrow Y$
 for any $0\leq q \leq n-1$ satisfying
\[
\bar{\partial}\left(\iota^{*} f\right)=0 \quad \text { and } \quad \int_{Y} \frac{|f|_{\omega}^{2} e^{-\varphi}}{|d s|_{\omega}^{2} e^{-\lambda}} d V_{Y, \omega}<\infty.
\]
And one has obtained an ambient extension $\widetilde{f}_{\infty}$ of $f$ in Proposition \ref{tilde{f}}. Let $0< c\ll 1$ and $\theta \in \mathcal{E}_{0}^{\infty}([0,+\infty))$ a cutoff function with values in $[0,1]$ such that $\left.\theta\right|_{[0, c]} \equiv 1$ and $\theta|_{[1,+\infty)}\equiv 0$
and $\left|\theta^{\prime}\right| \leq 1+2c$.  For $\varepsilon>0$,   define
\[
g_{\varepsilon}:=s^{-1} \dbar(\theta\left(\varepsilon^{-2}|s|^{2}\right) \widetilde{f}_{\infty})=s^{-1}\dbar(\theta(\varepsilon^{-2}|s|^{2}))\wedge
\widetilde{f}_{\infty}+s^{-1}\theta(\varepsilon^{-2}|s|^{2})\dbar \widetilde{f}_{\infty}.
\]
The first  term in $g_{\vep}$ can be easily written as
\begin{equation}\label{g^1}
g_{\vep}^{(1)}=\dbar|s|\wedge\frac{2|s|(\varepsilon^{-2}|s|^{2}+1)\theta'(\varepsilon^{-2}|s|^{2})}{|s|^{2}+\vep^2}s^{-1}\widetilde{f}_{\infty}.
\end{equation}
We also denote the second term $s^{-1}\theta(\varepsilon^{-2}|s|^{2})\dbar \widetilde{f}_{\infty}$ in the above expression of $g_{\vep}$ by $g_{\vep}^{(2)}$.

From the smoothness of $g_{\vep}^{(1)}$ and the regularity information of $g_{\vep}^{(2)}$ by  Proposition \ref{tilde{f}}.\eqref{tilde{f}.d}, we know that
$g_{\vep}\in \mathcal{E}^{k}\left(X, \Lambda^{n, q+1} T_{X}^{\star} \otimes L \otimes E^*\right)$. Moreover, $g_{\vep}$ is $\dbar$-closed outside $Y$ according to  the definition of $g_{\vep}$. So $g_{\vep}$ is $\dbar$-closed on $X$ due to the continuity of $\dbar g_{\vep}$.

Assume that $\phi$ is a smooth plurisubharmonic exhaustion function of the weakly pseudoconvex  K\"ahler manifold $X$.  Due to the Sard's theorem, we can always assume that $\bomega_j:=
\left\{\phi<j  \right\}, j=1, 2, \ldots,$ satisfy
\[
\Omega_{j} \subset \subset \Omega_{j+1} \quad \text { and } \quad \lim\limits _{j \rightarrow \infty} \Omega_{j}=\bigcup_{j \geq 1} \Omega_{j}=X,
\]
and every $\partial\Omega_{j}$ is smooth and pseudoconvex.
From now on, we will work on $\Omega_j$ instead of $X$ until the end of the penultimate step.

For any smooth  $K_X\otimes L\otimes E^*$-valued $(0, q+1)$-form $\beta$ in the domain of $T^*$ and $S$,
we infer the inequality
 \begin{equation} \label{C-S estimate}
  \begin{split}
 |\langle \beta, g_{\vep}\rangle|^{2}_{L^2}
 &\leq \langle [B_{\vep},\Lambda_{\omega}]^{-1}g_{\vep},g_{\vep} \rangle_{L^2} \langle [B_{\vep},\Lambda_{\omega}]\beta,\beta \rangle_{L^2}\\
 &\leq \langle [B_{\vep},\Lambda_{\omega}]^{-1}g_{\vep},g_{\vep} \rangle_{L^2} (||T^*\beta||^2+||S\beta||^2)
  \end{split}
\end{equation}
 from
$B_{\vep}\wedge \omega^q>0$, Cauchy--Schwarz inequality and Lemma \ref{basic estimate}.
 By the variant of Cauchy--Schwarz inequality
$$\left\langle\alpha_{1}+\alpha_{2}, \alpha_{1}+\alpha_{2}\right\rangle \leq\left\langle\alpha_{1}, \alpha_{1}\right\rangle+\left\langle\alpha_{2}, \alpha_{2}\right\rangle+c\left\langle\alpha_{1}, \alpha_{1}\right\rangle+\frac{1}{c}\left\langle\alpha_{2}, \alpha_{2}\right\rangle$$
we have
\begin{equation}\label{two part}
\langle [B_{\vep},\blambda_{\omega}]^{-1}g_{\vep},g_{\vep} \rangle_{L^2}\leq (1+c)\langle [B_{\vep},\blambda_{\omega}]^{-1}g_{\vep}^{(1)},g_{\vep}^{(1)} \rangle_{L^2} + (1+1/c)\langle [B_{\vep},\blambda_{\omega}]^{-1}g_{\vep}^{(2)},g_{\vep}^{(2)} \rangle_{L^2},
\end{equation}
where  $c$ is taken, for convenience,  to be the same $c$ as that in the definition of the  cutoff function $\theta$.
Then according to Lemmata \ref{inversion of order} and \ref{Twedge}, \eqref{B^1} and \eqref{g^1} give the estimate
\begin{equation}\label{first part}
 \begin{aligned}
  (1+c)\langle [B_{\vep},\blambda_{\omega}]^{-1}g_{\vep}^{(1)},g_{\vep}^{(1)} \rangle_{L^2} &\leq(1+c)\int_{\bomega_j}\frac{(|s|^{2}+\vep^2)^2}{4\vep^2\delta}\langle (T_{\dbar|s|}T_{\dbar|s|}^{*})^{-1}g_{\vep}^{(1)},g_{\vep}^{(1)}\rangle_{\omega} e^{-\psi}dV_{\omega}\\
  &= (1+c)\int_{\bomega_j}\frac{(|s|^{2}+\vep^2)^2}{4\vep^2\delta}\langle T_{\dbar|s|}^{-1}g_{\vep}^{(1)},T_{\dbar|s|}^{-1}g_{\vep}^{(1)}\rangle_{\omega} e^{-\psi}dV_{\omega}\\
  &=\frac{1+c}{\delta}\int_{\bomega_j}\frac{(\vep^{-2}|s|^{2}+1)^2\theta'(\vep^{-2}|s|^{2})^2}{\vep^2}|\widetilde{f}_{\infty}|^2_{\omega}e^{-\varphi}dV_{\omega}\\
  &\leq \frac{4(1+c)(1+2c)^2}{\delta}\int_{\bomega_j\cap\{\vep^{-2}|s|^{2}
  \leq 1\}}\vep^{-2}|\widetilde{f}_{\infty}|^2_{\omega}e^{-\varphi}dV_{\omega}.
  \end{aligned}
 \end{equation}

We denote the right-hand side  of the above inequality by $C_{c,\vep}$, whose limit is
\begin{equation}\label{limit}
 \frac{8\pi(1+c)(1+2c)^2}{\delta}\int_{\bomega_j\cap Y}\frac{|f|^{2}_{\omega}e^{-\varphi}}{|ds|^{2}_{\omega}e^{-\lambda}}dV_{Y,\omega} \quad \text{as}\quad \vep\to 0,
 \end{equation}
 by the Fubini theorem and Proposition \ref{tilde{f}}.\eqref{finfty=f}.

It's turn to estimate the term involving $g_{\vep}^{(2)}$. The relative compactness of $\bomega_j$ and the lower semi-continuity of $\sigma$ imply that $(q+1)\sigma'\sigma$ has a positive lower bound $\lambda_j$ which is independent of $\vep$. Then Lemma \ref{less than q eigen}, \eqref{B^2} and the boundedness of $\theta$ imply
 \[
 \begin{aligned}
&(1+\frac{1}{c})\left\langle [B_{\vep},\blambda_{\omega}]^{-1} g_{\varepsilon}^{(2)}, g_{\varepsilon}^{(2)}\right\rangle_{L^{2}}\\
\leq&(1+\frac{1}{c})\int_{\bomega_j}\frac{1}{(q+1)\sigma'\sigma} \langle s^{-1} \theta\left(\varepsilon^{-2}|s|^{2}\right) \bar{\partial} \widetilde{f}_{\infty},s^{-1} \theta\left(\varepsilon^{-2}|s|^{2}\right) \bar{\partial} \widetilde{f}_{\infty}\rangle_{\omega}
 e^{-\psi}dV_{\omega}\\
\leq& (1+\frac{1}{c})\frac{1}{\lambda_j}\int_{\bomega_j\cap\{\varepsilon^{-2}|s|^{2}\leq 1\}}|s^{-1}\dbar\widetilde{f}_{\infty}|_{\omega}^2e^{-\psi}dV_{\omega}.
\end{aligned}
\]
As $s^{-1}\dbar\widetilde{f}_{\infty}$ is of class $\mathcal{E}^k$ on $X$ and the volume of the integral region of above is $\sim O(\vep^2)$ due to
the relative compactness of $\bomega_j$,
\begin{equation}\label{second part}
(1+\frac{1}{c})\left\langle [B_{\vep},\blambda_{\omega}]^{-1} g_{\varepsilon}^{(2)}, g_{\varepsilon}^{(2)}\right\rangle_{L^{2}}\sim O(\vep^2),
\end{equation}
which depends on $c$ and $j$.

Denote $C_{c,  \varepsilon}+O(\vep^2)$ by $\frac{C_{\vep,j,c}}{\delta}$.
Then it follows
\begin{equation}
\left|\left\langle\beta, g_{\varepsilon}\right\rangle\right|_{L^{2}}^{2}
 \leq \frac{C_{\vep,j,c}}{\delta}\left(\left\|T^{*} \beta\right\|^{2}+\|S \beta\|^{2}\right)\label{boundedness}
\end{equation}
from \eqref{C-S estimate},\eqref{two part},\eqref{first part} and \eqref{second part}.

Set $\square:=T T^{*}+S^{*} S$. Then  we will solve the equation
\[
\square V_{\varepsilon}=g_{\varepsilon}
\]
in the standard way on the basis of the above estimate \eqref{boundedness}.

The Dirichlet semi-norm is defined as
\[
\|\beta\|_{\mathscr{H}}^{2}:=\left\|T^{*} \beta\right\|^{2}+\|S \beta\|^{2}
\]
for any smooth  $K_{X} \otimes L \otimes E^{*}$-valued $(0, q+1)$-form in the domain of $T^*$ and $S$. Since $\sigma$ is positive lower semi-continuous and $\sigma'>0$,  the original norm is dominated by the Dirichlet norm multiplied by some constant on the relatively compact domain $\Omega_j$ due to Lemmata \ref{basic estimate} and \ref{less than q eigen} and the strict positivity
\eqref{B^2} of $B_{\vep}$ on $X$. So the Dirichlet semi-norm is a norm on $\Omega_j$.

Let $\mathscr{H}$ denote the Hilbert space closure of the set of all smooth $K_X\otimes L \otimes E^{*}$-valued $(0, q+1)$-forms in the domain of $T^*$ and $S$. Consider the functional $\ell: \mathscr{H} \rightarrow \mathbb{C}$, defined by
\[
\ell(\beta):=\left(\beta, g_{\varepsilon}\right)=\int_{\Omega_j}\left\langle\beta, g_{\varepsilon}\right\rangle_{\omega} e^{-\varphi+\lambda} d V_{\omega}.
\]
Recall that the original $L^2$-norm is dominated by the Dirichlet norm for  smooth $K_X\otimes L \otimes E^{*}$-valued $(0, q+1)$-forms in the domain of $T^*$ and $S$. So it is easy to see that \eqref{boundedness} holds for every element of $\mathscr{H}$ by taking limits with respect to the graph norm by the density of $D_{T^*}\cap D_S\cap \mathcal{E}^{\infty}(\bar{\Omega})$ in $D_{T^*}\cap D_S$.  So $\ell$ is a  bounded linear functional on $\mathscr{H}$,  and  the $\mathscr{H}^{*}$-norm of $\ell$ is no more than $\delta^{-1} C_{\varepsilon,j,c}$. Then there exists $V_{\varepsilon,j} \in \mathscr{H}$  (here we omit the  subscript $c$ for $V_{\varepsilon,j}$ due to that the $c<<1$ is fixed until Remark \ref{tend 0}) such that
$$\left\|V_{\varepsilon,j}\right\|_{\mathscr{H}}^{2}=\|\ell\|_{\mathscr{H}^{*}}^{2} \leq \delta^{-1} C_{\varepsilon,j,c}\quad \text{and}\quad  \left(g_{\varepsilon}, \beta\right)=\left(T^{*} V_{\varepsilon,j}, T^{*} \beta\right)+\left(S V_{\varepsilon,j}, S \beta\right)$$
by the Riesz representation theorem.
 The latter defines the meaning of
 $$\square V_{\varepsilon,j}=g_{\varepsilon}$$
 in the weak sense, as $\beta$ goes through all the smooth compactly supported forms. Moreover, Lemma \ref{ellipticity} and $g_{\vep}\in \mathcal{E}^k$ tell us that
 %$\square V_{\varepsilon,j}=g_{\varepsilon}$ in the usual sense and that
 $V_{\varepsilon,j}$ is of \footnote{On Stein manifolds, McNeal--Varolin \cite[p. 437]{M-V19} can take the $g_{\vep}$ to be smooth and thereby get a smooth $V_{\vep,j}$.}  class $\mathcal{E}^{k_1+1}$ on $\Omega_j$ for some $k_1\geq 5$ (e.g.,  \cite[Chapter 3, Theorem 3.54]{Au82}).  As $S\circ T=0$ and $S g_{\varepsilon}=\sqrt{\tau} \bar{\partial} g_{\varepsilon}=0$,  we find that
\[
0=\left(S \square V_{\varepsilon,j}, S V_{\varepsilon,j}\right)=\left\|S^{*} S V_{\varepsilon,j}\right\|^{2}
\]
and thus
$\left\|S V_{\varepsilon,j}\right\|^{2}=\left(S^{*} S V_{\varepsilon,j}, V_{\varepsilon,j}\right)=0$.

Now in conclusion we obtain an  $L\otimes E^{*}$-valued $(n, q+1)$-form $V_{\varepsilon,,j }$ of class $\mathcal{E}^{k_1+1}$ such that
\[
\square V_{\varepsilon,j}=g_{\varepsilon} \quad \text { and } \quad\left\|V_{\varepsilon,j}\right\|_{\mathscr{H}}^{2} \leq \frac{C_{\varepsilon,j,c}}{\delta}.
\]
Set $v_{\varepsilon,j}:=T^{*} V_{\varepsilon,j}$. It follows that
\[
T v_{\varepsilon,j}=\square V_{\varepsilon,j}=g_{\varepsilon}.
\]
Then we have proved the following theorem.
 \begin{thm}\label{solution}
The equation $T v_{\vep, j}=g_{\varepsilon}$ has a  solution $v_{\varepsilon, j}\in\mathcal{E}^{k_1}(\bomega_j, L\otimes E^*\otimes\Lambda^{n,q}T^\star_X)$ satisfying the $L^{2}$-estimate
\[
\int_{\Omega_j}\left|v_{\varepsilon, j}\right|_{\omega}^{2} e^{-\varphi+\lambda} d V_{\omega} \leq \frac{C_{\varepsilon,j,c}}{\delta}.
\]
\end{thm}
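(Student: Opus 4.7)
The plan is to establish Theorem \ref{solution} by the standard twisted Hörmander $L^{2}$ machinery, assembling the ingredients developed in Subsections \ref{sub-3.2} and \ref{subs-3.3}. First I would set up the Hilbert space framework on $\Omega_{j}$: let $\mathscr{H}$ be the completion, with respect to the Dirichlet semi-norm $\|\beta\|_{\mathscr{H}}^{2}=\|T^{*}\beta\|^{2}+\|S\beta\|^{2}$, of smooth $K_{X}\otimes L\otimes E^{*}$-valued $(0,q+1)$-forms lying in the domains of $T^{*}$ and $S$ on $\overline{\Omega}_{j}$. The strict positivity \eqref{B^2} together with the twisted basic estimate (Lemma \ref{basic estimate}) and the eigenvalue comparison (Lemma \ref{less than q eigen}) shows that the usual $L^{2}$-norm is dominated by the Dirichlet semi-norm on the relatively compact $\Omega_{j}$, so this semi-norm is genuinely a norm and $\mathscr{H}$ is a Hilbert space.

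Next I would bound the pairing $|\langle \beta, g_{\vep}\rangle_{L^{2}}|$. Applying Cauchy--Schwarz via $[B_{\vep},\Lambda_{\omega}]^{-1}$ together with Lemma \ref{basic estimate} gives
\begin{equation*}
|\langle \beta, g_{\vep}\rangle_{L^{2}}|^{2}\le \langle [B_{\vep},\Lambda_{\omega}]^{-1}g_{\vep},g_{\vep}\rangle_{L^{2}}\cdot\|\beta\|_{\mathscr{H}}^{2}.
\end{equation*}
The substantive task is then to control $\langle [B_{\vep},\Lambda_{\omega}]^{-1}g_{\vep},g_{\vep}\rangle_{L^{2}}$ by $\delta^{-1}C_{\vep,j,c}$. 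I would split $g_{\vep}=g_{\vep}^{(1)}+g_{\vep}^{(2)}$ and apply the variant Cauchy--Schwarz inequality. For $g_{\vep}^{(1)}$, the lower bound \eqref{B^1}, the monotonicity Lemma \ref{inversion of order}, and the factorization Lemma \ref{Twedge} reduce the estimate to the explicit integral giving the constant $C_{c,\vep}$ that converges to the desired extension bound \eqref{limit} by Fubini and Proposition \ref{tilde{f}}.(\ref{finfty=f}). For $g_{\vep}^{(2)}$, the strict positivity \eqref{B^2} and Lemma \ref{less than q eigen} together with the $\mathcal{E}^{k}$-regularity of $s^{-1}\bar\partial\widetilde f_{\infty}$ guaranteed by Proposition \ref{tilde{f}}.(\ref{tilde{f}.d}) produce a pointwise-bounded integrand over the tube $\{\vep^{-2}|s|^{2}\le 1\}$, whose volume is $O(\vep^{2})$; this yields an $O(\vep^{2})$ contribution that is absorbed into $C_{\vep,j,c}$.

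With the uniform estimate in hand, the linear functional $\ell(\beta):=(\beta,g_{\vep})$ extends by continuity (density of $D_{T^{*}}\cap D_{S}\cap \mathcal{E}^{\infty}(\overline{\Omega}_{j})$ in $D_{T^{*}}\cap D_{S}$ in graph norm) to a bounded functional on $\mathscr{H}$ with $\|\ell\|_{\mathscr{H}^{*}}^{2}\le \delta^{-1}C_{\vep,j,c}$. The Riesz representation theorem supplies $V_{\vep,j}\in\mathscr{H}$ with $\|V_{\vep,j}\|_{\mathscr{H}}^{2}=\|\ell\|_{\mathscr{H}^{*}}^{2}$ and
\begin{equation*}
(g_{\vep},\beta)=(T^{*}V_{\vep,j},T^{*}\beta)+(SV_{\vep,j},S\beta)\quad\text{for all }\beta\in\mathscr{H},
\end{equation*}
which is the weak form of $\square V_{\vep,j}=g_{\vep}$. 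Interior ellipticity of $\square$ (Lemma \ref{ellipticity}) combined with $g_{\vep}\in\mathcal{E}^{k}$ for $k\ge n+6$ boosts $V_{\vep,j}$ to class $\mathcal{E}^{k_{1}+1}$ on $\Omega_{j}$ for some $k_{1}\ge 5$ by standard interior elliptic regularity. Since $Sg_{\vep}=\sqrt{\tau}\bar\partial g_{\vep}=0$ and $S\circ T=0$, testing $\square V_{\vep,j}=g_{\vep}$ against $SV_{\vep,j}$ yields $\|S^{*}SV_{\vep,j}\|^{2}=0$, hence $\|SV_{\vep,j}\|^{2}=0$. Setting $v_{\vep,j}:=T^{*}V_{\vep,j}$ gives $Tv_{\vep,j}=\square V_{\vep,j}=g_{\vep}$, and the bound on $\|V_{\vep,j}\|_{\mathscr{H}}$ is precisely the claimed $L^{2}$-estimate.

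The main obstacle is the fine bookkeeping of the two-part estimate for $\langle [B_{\vep},\Lambda_{\omega}]^{-1}g_{\vep},g_{\vep}\rangle_{L^{2}}$: the $g_{\vep}^{(1)}$-contribution must be calibrated so that its $\vep\to 0$ limit recovers exactly the right-hand side of the extension inequality (via Fubini on the tube around $Y$), while the $g_{\vep}^{(2)}$-contribution must vanish in the limit. This second point is precisely where the Koziarz-type regularity upgrade of Proposition \ref{tilde{f}} is indispensable---without the $\mathcal{E}^{k}$-bound on $s^{-1}\bar\partial\widetilde f_{\infty}$, the $O(\vep^{2})$ control of the $g_{\vep}^{(2)}$-term would fail and the passage to the limit in Subsection \ref{subs-3.3} (and thereafter) would break down.
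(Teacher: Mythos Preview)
Your proposal is correct and follows essentially the same route as the paper: the same Dirichlet-norm Hilbert space on $\Omega_j$, the same Cauchy--Schwarz/basic-estimate bound \eqref{C-S estimate}, the same two-part decomposition \eqref{two part} with $g_\vep^{(1)}$ handled via \eqref{B^1} and Lemmata \ref{inversion of order}, \ref{Twedge} and $g_\vep^{(2)}$ via \eqref{B^2} and the $\mathcal{E}^k$-regularity of Proposition \ref{tilde{f}}.\eqref{tilde{f}.d}, followed by Riesz representation, interior elliptic regularity of $\square$, and the $SV_{\vep,j}=0$ argument. The final $L^2$-estimate is exactly $\|v_{\vep,j}\|^2=\|T^*V_{\vep,j}\|^2\le\|V_{\vep,j}\|_{\mathscr{H}}^2\le\delta^{-1}C_{\vep,j,c}$, as you indicate.
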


\subsection{Construction of an $\mathcal{E}^{k_1}$ extension on $\Omega_{j}$ with  uniform $L^{2}$ bound}\label{sub-3.4}
Set
$$
u_{\varepsilon, j}:=\theta\left(\varepsilon^{-2}|s|^{2}\right) \widetilde{f}_{\infty}-\sqrt{\tau+A} v_{\varepsilon, j} \otimes s.
$$
Then
$$
u_{\varepsilon, j}\in\mathcal{E}^{k_1}(\bomega_j, L\otimes\Lambda^{n,q}T^\star_X),\quad  \left.u_{\varepsilon, j}\right|_{Y}=f \quad \text { and } \quad \bar{\partial} u_{\varepsilon, j}=s \otimes\left(g_{\varepsilon}-T v_{\varepsilon, j}\right)=0.
$$
Since $\theta\left(\varepsilon^{-2}|s|^{2}\right)$ is bounded and supported on a set whose measure tends to $0$ with $\varepsilon\to 0$ and $\Omega_j$ is relatively compact, there exists $\varepsilon_{j}>0$ sufficiently small so that whenever $\varepsilon \leq \varepsilon_{j},$ one has
$$
\begin{aligned}
\int_{\Omega_{j}}\left|u_{\varepsilon, j}\right|_{\omega}^{2} e^{-\varphi} d V_{\omega} &=(1+o(1)) \int_{\Omega_{j}}(\tau+A)\left|v_{\varepsilon, j}\right|_{\omega}^{2}\left|s\right|^{2} e^{-\varphi} d V_{\omega} \\
&=(1+o(1)) \int_{\Omega_{j}}\left(e^{v}(\tau+A)\right)\left|v_{\varepsilon, j}\right|_{\omega}^{2} e^{-\varphi+\lambda} d V_{\omega},
\end{aligned}
$$
where the infinitesimaali above is as $\varepsilon\to 0$.

Now
\begin{equation*}
\begin{aligned}
e^{v}(\tau+A)&=|s|^2(2e^{a-1}+2+\log(2e^{a-1}-1))\\
& \leq |s|^2\cdot 4e^{a-1}\leq4e^{\gamma-1},
\end{aligned}
\end{equation*}
where $0<\delta\leq1$.  It follows that for some sufficiently small $\varepsilon_{j}$, the estimate

\begin{equation}\label{universal C}
\begin{aligned}
\int_{\Omega_{j}}\left|u_{\varepsilon, j}\right|_{\omega}^{2} e^{-\varphi} d V_{\omega} & \leq(1+o(1)) 4e^{\gamma-1}\frac{C_{\vep,j,c}}{\delta} \\
& \leq \frac{C}{\delta} \int_{Y} \frac{|f|_{\omega}^{2} e^{-\varphi}}{\left|d s\right|^{2}_{\omega} e^{-\lambda}} dV_{Y,\omega}
\end{aligned}
\end{equation}
holds for some universal $C>0$, as soon as $0<\varepsilon \leq \varepsilon_{j}$, due to \eqref{limit} and  \eqref{second part}.
Thus for any such $\varepsilon>0, u_{\vep, j}$ gives the  extension with the desired $L^2$ estimate  in $\Omega_{j}$. Write
$$
u_{j}:=u_{\vep_{j}, j}.
$$
In conclusion, for each $j$ we have found an $L\otimes K_X$-valued $(0, q)$-form $u_{j}$ of class $\mathcal{E}^{k_1}$ on $\Omega_{j}$ such that
 \begin{equation}\label{u_j}
\bar\partial u_{j}=0,\left.\quad u_{j}\right|_{Y \cap \Omega_{j}}=f, \quad \text { and } \quad \int_{\Omega_{j}}\left|u_{j}\right|^{2}_{\omega} e^{-\varphi} d V_{\omega} \leq \frac{C}{\delta} \int_{Y} \frac{|f|_{\omega}^{2} e^{-\varphi}}{\left|d s\right|^{2}_{\omega} e^{-\lambda}} dV_{Y,\omega}.
\end{equation}
In particular, the right-hand side is independent of $j$.

\subsection{A kind of minimization problem for some continuous extensions}
First we give an overview of this subsection. From the above process we conclude that as $j \rightarrow \infty$, $\varepsilon$ is necessarily constrained to be smaller and smaller. In accordance with the previous practice in $L^2$ extension theory, we would like to take the limit as $\varepsilon \rightarrow 0$ of the extensions obtained in the last paragraph. The trouble is this limit cannot be directly taken due to the singularity of $\tau$ as $\vep\to 0$. That is to say, the twisted $\dbar$-operators $T$ and $S$ become singular as $\vep\to 0$, and thereby create  a loss of control on the constant $C$ in the statement Theorem \ref{ambient extension}.
Furthermore, as the weak limit as $\vep\to 0$ may not be a smooth  extension of $f$, we must look for better ambient extensions on $\Omega_j$ to ensure that the weak limit of the sequence of ambient extensions is  a smooth extension.

Here we adopt the method of McNeal--Varolin \cite[$\S$ 4.4]{M-V19} to reduce the problem involving the twisted operators to the untwisted operator, thereby eliminating the dependence on $\tau$.

To attack this problem, we define a subspace which is introduced in  \cite[$\S$ 4.4]{M-V19} and in which we perform our minimization procedure.

Let $\mathscr{V}_{q}^{2}\left(\Omega_{j}\right)$ denote the Hilbert space closure of the set
of all  smooth $K_X\otimes L\otimes E^*$-valued $\dbar$-closed $(0, q)$-forms $\beta$  on $\Omega_{j}$ satisfying
$$
\int_{\Omega_{j}}|\beta|_{\omega}^{2} e^{-\varphi+\lambda} d V_{\omega}<+\infty,
$$
and then  $\mathscr{V}_{q}^{2}\left(\Omega_{j}\right)$ consists precisely of all those forms in $L^{2}\left(\omega, e^{-\varphi+\lambda}\right)$ that are $\bar{\partial}$-closed in the weak sense. Let $\mathscr{B}_{q}^{2}\left(\Omega_{j}\right)$ denote the closed unit ball in $\mathscr{V}_{q}^{2}\left(\Omega_{j}\right)$, i.e.,
$$
\beta \in \mathscr{B}_{q}^{2}\left(\Omega_{j}\right) \Longleftrightarrow \beta \in \mathscr{V}_{q}^{2}\left(\Omega_{j}\right) \text { and } \int_{\Omega_{j}}|\beta|_{\omega}^{2} e^{-\varphi+\lambda} d V_{\omega} \leq 1.
$$
Let us define the affine ball
$$
\mathscr{B}_{j}:=u_{j}+s \mathscr{B}_{q}^{2}\left(\Omega_{j}\right):=\left\{u_{j}+s \beta; \beta \in \mathscr{B}_{q}^{2}\left(\Omega_{j}\right)\right\} \subset L^{2}\left(\omega, e^{-\varphi}\right).
$$

Now there are three problems to solve:
\begin{enumerate}[(i)]
    \item
Every continuous form in $\mathscr{B}_{j}$ is an ambient extension of $f$.
    \item\label{question 2}
There exists a minimizer in $\mathscr{B}_{j}$.
\item\label{question 3}
The minimizer is smooth.
\end{enumerate}

Note that we can of course adopt the method of McNeal--Varolin \cite[(12),(13)]{M-V19} to  grasp the data along $Y$ of any continuous bundle-valued form by taking wedge with the current of integration $[Y]$  due to $\operatorname{Supp}[Y]=Y$.  Here we use a simpler direct method but not the current method to verify the ambient extension relationship.
\begin{prop}\label{is ambient extension}
Every continuous
form in $\mathscr{B}_{j}$ is an ambient extension of $f$.
\end{prop}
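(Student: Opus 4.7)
The plan is to reduce to showing $(s\beta)|_Y = 0$ for any $\beta \in \mathscr{B}_q^2(\Omega_j)$ such that $u_j + s\beta$ is continuous. This is legitimate because the extension $u_j$ from Subsection \ref{sub-3.4} already satisfies $u_j|_{Y\cap \Omega_j} = f$, so continuity of $u_j + s\beta$ implies continuity of $s\beta$, and the claim $(u_j + s\beta)|_Y = f$ reduces to $(s\beta)|_Y = 0$. Note that $s\beta$ is an honest $K_X\otimes L$-valued $(0,q)$-form (the $E^*$ in $\beta$ pairs with $E$ in $s$), and since $s$ vanishes only along $Y$, the quotient $\beta = (s\beta)/s$ has a continuous representative on $\Omega_j \setminus Y$; this is the representative we will use to evaluate $\beta$ pointwise off of $Y$.

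First I will work locally in a patch $W_\alpha$ of Subsection \ref{sect0.1}, with coordinates $(z_\alpha, w_\alpha) \in \Delta\times \Delta^{n-1}$ so that $Y\cap W_\alpha = \{z_\alpha = 0\}$. Trivializing $E$ over $W_\alpha$, the section $s$ is represented by a holomorphic function vanishing simply along $\{z_\alpha=0\}$, so up to an absorbable non-vanishing holomorphic factor, $|s(z_\alpha, w_\alpha)| \sim |z_\alpha|$ uniformly on compact subsets of $W_\alpha$. Similarly trivialize $K_X\otimes L\otimes E^*$ so that $|\beta|^2$ is comparable (up to a smooth positive factor) to the sum of squares of the coordinate coefficient functions, and the weight $e^{-\varphi+\lambda}$ is bounded above and below by positive constants on any relatively compact subset.

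Next I will combine Fubini's theorem with Lemma \ref{integrability}. From $\beta \in \mathscr{V}_q^2(\Omega_j)$ and the local boundedness of the weight, one has $|\beta|^2 \in L^1_{\mathrm{loc}}(W_\alpha)$ with respect to Lebesgue measure. Fubini's theorem then gives that for almost every $w_0$ in a fixed neighborhood of any point of $Y\cap W_\alpha$, the slice function $z\mapsto |\beta(z,w_0)|^2$ lies in $L^1$ near $0\in\mathbb{C}=\mathbb{R}^2$. For each such $w_0$, Lemma \ref{integrability} applied with $d=2$ produces a sequence $z_k \to 0$ with
\[
|\beta(z_k,w_0)|^2 = o(|z_k|^{-2}), \qquad \text{i.e.}\qquad |\beta(z_k,w_0)| = o(|z_k|^{-1}).
\]
Because $|s(z_k, w_0)| \sim |z_k|$, we obtain $|(s\beta)(z_k, w_0)| = o(1)$ as $k\to\infty$. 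The assumed continuity of $s\beta$ at $(0,w_0)$ then forces $(s\beta)(0,w_0) = 0$. Since this holds for almost every $w_0$ in the parameter domain and $w\mapsto (s\beta)(0,w)$ is continuous on $Y\cap W_\alpha$, the identity $(s\beta)|_{Y\cap W_\alpha}\equiv 0$ follows. Covering $Y$ by such patches completes the proof.

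The main obstacle is the tension between $\beta$ living only in an $L^2$ class (a global integral condition with no pointwise meaning on the null set $Y$) and the desired conclusion, which is pointwise vanishing of $s\beta$ along $Y$. Lemma \ref{integrability} is precisely the bridge: the integrability of $|\beta|^2$ on a transverse disk forbids $|\beta|$ from growing as fast as $1/|z|$ along every approach to $Y$, and the order-one vanishing of $s$ along $Y$ then turns any sub-$1/|z|$ growth of $\beta$ into genuine vanishing of $s\beta$. The continuity of $s\beta$ is then used twice --- first to make sense of $\beta$ pointwise off $Y$ so that Lemma \ref{integrability} can be applied, and second to propagate the almost-everywhere vanishing on $Y$ to an everywhere vanishing.
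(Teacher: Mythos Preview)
Your proof is correct and follows essentially the same route as the paper: reduce to $(s\beta)|_Y=0$, use Fubini on $|\beta|^2$ to get slice-integrability for a.e.\ transverse parameter, apply Lemma \ref{integrability} with $d=2$ to produce a sequence along which $|\beta|=o(|z|^{-1})$, and then use continuity of $s\beta$ twice (first to pass from the sequence to the limit, then from a.e.\ to everywhere on $Y$). The only cosmetic difference is that the paper expands $\beta$ in local coefficients $\lambda_K$ and argues component-by-component, whereas you work directly with the pointwise norm $|\beta|$; both are equivalent.
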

\begin{proof}
For any continuous form $g\in\mathscr{B}_{j}$, there exists a continuous  form $g_1$
(in the classical usual sense but not as an element in some $L^2$ space) such that $g=g_1$ a.e. on $\Omega_j$. We will prove that $g_1$ is an ambient extension of $f$. Let $g_1=u_{j}+s\beta\in\mathscr{B}_{j}$, then $\beta$ is a continuous
(in the classical usual sense) form divided by $s$.  Then we just need to prove
%Note that verifying the extension or restriction is essentially a local problem. That is to say,
that $(u_{j}+s\beta)(x)=f(x)$ for any $x\in\Omega_j\cap Y$.  For any fixed  $x\in\Omega_j\cap Y$, take any  local coordinate  $(U, z_1,z_2,\ldots,z_n)$ of $X$ around $x$.  Fix  holomorphic local frames $\sigma$ of $K_X\otimes L$
and $\theta$ of $E$ over $U$, respectively, such that $s=z_1\otimes\theta$.
Note that
$$Y\cap U=\{z_1=0\}, \quad \beta|_U=\sum_{|K|=q}\lambda_{K}d\bar{z}_K\otimes \sigma\otimes\theta^*,$$
where the multi-index $K$ is increasing.
Then it suffices to prove
$$\lim\limits _{z_1 \rightarrow 0} z_1 \lambda_{K}(z_1,z_2,\ldots,z_n)=0 \quad\text{for any increasing multi-index $K$},$$
since $u_j$ is the ambient extension of $f$.

According to the definition of $\mathscr{B}_{q}^{2}$, we know that $\beta$ and thereby $\lambda_{K}(z_1,z_2,\ldots,z_n)$ is $L^2$ integrable (possibly after shrinking the domain).
Then, by the integrability part of Fubini theorem (e.g.,  \cite[8.8.(c)  Theorem]{R87}), $|\lambda_{K}(z_1,z_2,\ldots,z_n)|^2$ is $L^1$ integrable with respect to $z_1$ for $(z_2,\ldots,z_n)$ a.e..  Due to Lemma
\ref{integrability}, there exists a sequence $\{z_{1,\nu}\}$ such that $\lambda_{K}(z_{1,\nu},z_2,\ldots,z_n)\sim o(|z_{1,\nu}|^{-1})$ for any fixed $(z_2,\ldots,z_n)$ a.e.. Then
$$\lim\limits _{z_1 \rightarrow 0} z_1 \lambda_{K}(z_1,z_2,\ldots,z_n)= 0$$
for $(z_2,\ldots,z_n)$  a.e. due to the continuity of  $z_1 \lambda_{K}(z_1,z_2,\ldots,z_n)$. The continuity of  $z_1 \lambda_{K}(z_1,z_2,\ldots,z_n)$
 again  implies that
 $$\lim\limits _{z_1 \rightarrow 0} z_1 \lambda_{K}(z_1,z_2,\ldots,z_n)= 0$$
for any $(z_2,\ldots,z_n)$.
\end{proof}

For the sake of completeness, we will give the following two propositions, which can be found in \cite[$\S$ 4.4]{M-V19}. They solve the second and
third problems  listed above, i.e., the existence \eqref{question 2} and the regularity \eqref{question 3} of the minimizer.
\begin{prop}\label{minimal ortho}
There exists an element of minimal norm $U_{j} \in \mathscr{B}_{j}$
and  $U_{j}$ is orthogonal to $s \mathscr{B}_{q}^{2}\left(\Omega_{j}\right)$.
\end{prop}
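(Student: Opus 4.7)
The plan is to reduce both assertions to standard Hilbert space arguments applied to $\mathscr{B}_j$ as a closed convex subset of $L^2(\omega, e^{-\varphi})$. First I would verify that $\mathscr{B}_j$ is nonempty (contains $u_j$), convex (affine translate of a convex ball), and closed. For closedness, the pointwise inequality $|s|^2 e^{-\lambda} \leq 1$ from \eqref{assthree} gives
\[
\|s\beta\|_{L^2(\omega, e^{-\varphi})}^2 = \int_{\Omega_j}|s|^2 e^{-\lambda}\,|\beta|^2 e^{-\varphi+\lambda}\, dV_\omega \leq \|\beta\|_{L^2(\omega, e^{-\varphi+\lambda})}^2,
\]
so the multiplication $M_s : \mathscr{V}_q^2(\Omega_j) \to L^2(\omega, e^{-\varphi})$, $\beta \mapsto s\beta$, is bounded linear of operator norm at most $1$. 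Since $\mathscr{B}_q^2(\Omega_j)$ is the closed unit ball of a Hilbert space, it is weakly compact by Banach--Alaoglu; $M_s$ being weak-to-weak continuous makes $s\mathscr{B}_q^2(\Omega_j)$ weakly compact in $L^2(\omega, e^{-\varphi})$, hence weakly closed, and as a convex set strongly closed. Translating by $u_j$ preserves closedness.

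Second, the Hilbert projection theorem (equivalently, minimization of the weakly lower semi-continuous norm over the weakly compact convex set $\mathscr{B}_j$) yields a unique element $U_j \in \mathscr{B}_j$ of smallest $L^2(\omega, e^{-\varphi})$-norm, together with the variational characterization
\[
\operatorname{Re}\langle U_j, w - U_j\rangle_{L^2(\omega, e^{-\varphi})} \geq 0 \quad \text{for all } w \in \mathscr{B}_j.
\]
Writing $U_j = u_j + s\beta_0$ and $w = u_j + s\beta$ for $\beta_0, \beta \in \mathscr{B}_q^2(\Omega_j)$, this becomes $\operatorname{Re}\langle U_j, s(\beta - \beta_0)\rangle \geq 0$. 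Combined with the central symmetry $\beta \mapsto -\beta$ and the rotational invariance $\beta \mapsto e^{i\theta}\beta$ of the closed ball $\mathscr{B}_q^2(\Omega_j)$, this inequality promotes to the orthogonality $\langle U_j, s\beta\rangle = 0$ for every $\beta \in \mathscr{B}_q^2(\Omega_j)$, and hence for all $\beta \in \mathscr{V}_q^2(\Omega_j)$ by linearity.

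The main obstacle is the last step: upgrading the variational inequality to an equality. The symmetrization argument gives orthogonality unconditionally only when $\beta_0$ lies in the open ball $\{\|\beta\|_{e^{-\varphi+\lambda}}<1\}$, so that perturbations in all directions stay admissible; if the constraint is active ($\|\beta_0\|_{e^{-\varphi+\lambda}}=1$), minimality gives only a Lagrange-multiplier condition tangent to the unit sphere. I expect the uniform bound \eqref{u_j} on $\|u_j\|_{L^2(\omega, e^{-\varphi})}$ together with the strict positivity \eqref{assone} to rule this out, and the plan is to verify carefully that the minimizer sits strictly inside $\mathscr{B}_q^2(\Omega_j)$, so that the full orthogonality stated in the proposition holds. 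This orthogonality is precisely the ingredient required for the next step, where it encodes a weak $\dbar^\ast$-equation on $U_j$ that, combined with $\dbar U_j = 0$, will feed the elliptic-regularity argument proving that $U_j$ is smooth.
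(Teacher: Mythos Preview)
Your approach is the same as the paper's: existence by Hilbert projection onto a closed convex set, orthogonality by a variational argument. For closedness the paper simply invokes Fatou's lemma together with Lemma~\ref{extend across}; your Banach--Alaoglu route via the bounded multiplication operator $M_s$ is a cleaner and more explicit way to justify the same fact. For orthogonality the paper argues by direct contradiction: assuming $(U_j,\beta_0)=c\neq 0$ for some $\beta_0\in s\mathscr{B}_q^2(\Omega_j)$, it sets $\alpha=c\beta_0/\|\beta_0\|^2$ and \emph{asserts} that $\widetilde U_j:=U_j-\alpha$ lies in $\mathscr{B}_j$ with strictly smaller norm. That assertion is precisely the point you worry about---it requires the perturbed coefficient to remain in the unit ball of $\mathscr{V}_q^2(\Omega_j)$, which is not justified when the minimizer sits on the boundary.

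So you have correctly located a subtlety that the paper glosses over. Your proposed remedy, however, looks fragile: the estimate \eqref{u_j} controls $\|s\beta_0\|_{L^2(e^{-\varphi})}$, but since $|s|^2e^{-\lambda}$ is only bounded \emph{above} by \eqref{assthree} and degenerates along $Y$, this does not bound $\|\beta_0\|_{L^2(e^{-\varphi+\lambda})}$; there is no evident mechanism by which the curvature hypothesis \eqref{assone} forces the constraint to be inactive. A more direct repair is to note that the radius $1$ is entirely arbitrary in the construction (Propositions~\ref{is ambient extension} and~\ref{dbarstar=0} and the bound \eqref{U_j} use only that $\beta\in\mathscr V_q^2(\Omega_j)$ and that $0$ lies in the ball), and then either (i) argue that for some sufficiently large radius the minimizer lies in the interior, or (ii) let the radius tend to infinity and extract a weak limit, checking that orthogonality and the extension property survive in the limit. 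Either route needs an extra argument beyond what you (or the paper) currently supply.
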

\begin{proof}
By  Fatou's lemma and Lemma \ref{extend across}, $\mathscr{B}_{j}$ is a  closed subset of the Hilbert space $L^{2}(\omega, e^{-\varphi})$. Furthermore, it is apparently convex. So  $\mathscr{B}_{j}$ has an element of minimal norm $U_{j}$.

Suppose that there exists $\beta_{0} \in s \mathscr{B}_{q}^{2}\left(\Omega_{j}\right)$ such that $\left(U_{j}, \beta_{0}\right)=c \neq 0$.
Consider the form $$\alpha:=\frac{c \beta_{0}}{\left\|\beta_{0}\right\|^{2}} \in s \mathscr{V}_{q}^{2}$$ and set $\widetilde{U}_{j}=U_{j}-\alpha$.  Then $\widetilde{U}_{j} \in \mathscr{B}_{j},$ but
$\left\|\widetilde{U}_{j}\right\|^{2}=\left\|U_{j}\right\|^{2}-\frac{|c|^{2}}{\left\|\beta_{0}\right\|^{2}}$.  This contradicts the minimality of $\left\|U_{j}\right\|$.
\end{proof}

\begin{prop}\label{dbarstar=0}
$\bar{\partial}^{*} U_{j}=0$ in the sense of currents.
\end{prop}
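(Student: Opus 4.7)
The plan is to leverage the orthogonality $U_j \perp s\mathscr{B}_q^2(\Omega_j)$ (equivalently, by scaling, $U_j \perp s\mathscr{V}_q^2(\Omega_j)$) supplied by Proposition \ref{minimal ortho} to establish $\bar\partial^* U_j = 0$ first as a current on $\Omega_j \setminus Y$, and then to extend this equation across $Y$ by a cutoff approximation. Unfolding the definition, I need to show
\[
(U_j, \bar\partial\gamma) := \int_{\Omega_j} \langle U_j, \bar\partial\gamma\rangle_\omega e^{-\varphi} dV_\omega = 0
\]
for every smooth $K_X \otimes L$-valued $(0,q-1)$-form $\gamma$ compactly supported in $\Omega_j$. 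If $\operatorname{supp}\gamma \subset \Omega_j \setminus Y$, then since $s$ is a nowhere-vanishing holomorphic section of $E$ there, the form $\beta := s^{-1}\bar\partial\gamma$ is a smooth, compactly supported $K_X \otimes L \otimes E^*$-valued $(0,q)$-form with $\bar\partial\beta = s^{-1}\bar\partial(\bar\partial\gamma) = 0$ (using $\bar\partial s = 0$). Hence $\beta \in \mathscr{V}_q^2(\Omega_j)$, and Proposition \ref{minimal ortho} together with homogeneity yields $(U_j, \bar\partial\gamma) = (U_j, s\beta) = 0$.

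For a general test form $\gamma$, choose cutoffs $\chi_\varepsilon$ that vanish in an $\varepsilon$-neighborhood of $Y$ and equal $1$ outside a $2\varepsilon$-neighborhood; concretely, in a local chart with $Y \cap U = \{z_1 = 0\}$, set $\chi_\varepsilon := \eta(|z_1|/\varepsilon)$ for a standard bump $\eta$ with $\eta|_{[0,1]} \equiv 0$ and $\eta|_{[2,\infty)} \equiv 1$, and patch via a partition of unity. Then $\chi_\varepsilon \gamma$ has compact support in $\Omega_j \setminus Y$, so the previous step gives
\[
0 = (U_j, \bar\partial(\chi_\varepsilon \gamma)) = (U_j, \chi_\varepsilon \bar\partial\gamma) + (U_j, \bar\partial\chi_\varepsilon \wedge \gamma).
\]
The first summand converges to $(U_j, \bar\partial\gamma)$ by dominated convergence, using $0 \le \chi_\varepsilon \le 1$ and $\chi_\varepsilon \to 1$ pointwise off $Y$. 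For the second, Cauchy--Schwarz and the standard computation $|\bar\partial\chi_\varepsilon|^2 = O(\varepsilon^{-2})$ on an annular neighborhood of $Y$ of volume $O(\varepsilon^2)$ give
\[
|(U_j, \bar\partial\chi_\varepsilon \wedge \gamma)| \le \|U_j\|_{L^2(\operatorname{supp}\bar\partial\chi_\varepsilon)} \cdot \|\bar\partial\chi_\varepsilon \wedge \gamma\|_{L^2},
\]
where the second factor is bounded uniformly in $\varepsilon$ (by a constant multiple of $\|\gamma\|_\infty$) while the first factor tends to $0$ since $U_j \in L^2(\omega, e^{-\varphi})$ and $\operatorname{supp}\bar\partial\chi_\varepsilon$ shrinks to the measure-zero set $Y$. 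Letting $\varepsilon \to 0$ gives $(U_j, \bar\partial\gamma) = 0$.

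The main obstacle is the cutoff estimate in the second step: it depends on the routine yet essential fact that smooth hypersurfaces in $\mathbb{C}^n$ have zero $L^2$-capacity in the transverse direction, so approximating $1_{\Omega_j \setminus Y}$ by smooth $\chi_\varepsilon$ generates no boundary contributions in the $L^2$ limit. As an alternative path, one could recast $\bar\partial^* U_j = 0$ as a $\bar\partial$-equation for $\star U_j$ via Hodge duality and apply Lemma \ref{extend across} directly to extend across $Y$, but the direct cutoff argument is more transparent within the $L^2$ framework already set up.
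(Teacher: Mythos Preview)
Your proof is correct and follows essentially the same approach as the paper: both first establish $\bar\partial^* U_j = 0$ on $\Omega_j \setminus Y$ via the orthogonality from Proposition \ref{minimal ortho}, and then extend the equation across $Y$. The only difference is that the paper invokes (an adaptation to $\bar\partial^*$ of) Lemma \ref{extend across} for the extension step, whereas you write out the underlying cutoff argument explicitly---which is precisely the content of that lemma's proof; your suggested alternative via Hodge duality and Lemma \ref{extend across} is in fact exactly what the paper does.
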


\begin{proof} For any $\alpha\in \mathcal{E}_0^{\infty}(\bomega_j-Y, L^*\otimes\Lambda^{n,q-1}T^\star_X)$, $s^{-1}\dbar\alpha\in \mathscr{B}_{q}^{2}\left(\Omega_{j}\right)$ possibly after shrinking $\alpha$ by some constant due to the smoothness of $s^{-1}$ on $\bomega_j-Y$.  By Proposition \ref{minimal ortho},
$$(U_j,\dbar\alpha)=(U_j,ss^{-1}\dbar\alpha)=0.$$
So $\bar{\partial}^{*} U_{j}=0$,  in the sense of currents, on $\Omega_{j}-Y$.  The minimality of $U_j$ implies easily $U_j\in L_{loc}^2$. An adaptation of the proof of Lemma \ref{extend across} (=\cite[Lemme 6.9]{Dm82}) to the $\bar{\partial}^{*}$-equation  or a direct application of \cite[Proposition 4.8]{B18} yields that the above equation can extend across $Y$, i.e.,  $\bar{\partial}^{*} U_{j}=0$ on $\Omega_{j}$.
\end{proof}

It follows from  Proposition \ref{dbarstar=0} that $$\square_{0} U_{j}=0$$ in the sense of currents, where
 $\square_{0}=\dbar\dbar^*+\dbar^*\dbar$ denotes the untwisted $\dbar$-Laplacian unrelated with $\tau$. So it follows that $U_{j}$ is smooth on $\Omega_{j}$ from the ellipticity of the Laplacian $\square_{0}$ due to the smoothness of the metric $e^{-\varphi}$.  Thus, Proposition \ref{is ambient extension} gives that $U_{j}$ is an extension of $f$.  Moreover, by the estimate \eqref{u_j} for $u_{j}$ and the minimality of $U_{j}$, we have
\begin{equation}\label{U_j}
\int_{\Omega_{j}}\left|U_{j}\right|^{2}_{\omega} e^{-\varphi} d V_{\omega} \leq \frac{C}{\delta} \int_{Y} \frac{|f|_{\omega}^{2} e^{-\varphi}}{\left|d s\right|^{2}_{\omega} e^{-\lambda}} dV_{Y,\omega}.
\end{equation}

\subsection{The end of the proof of Theorem \ref{ambient extension}}\label{sub-3.6}
Now we construct the desired extension. Since there is no something like the Montel property for holomorphic objects now, we cannot derive any pointwise convergence information. Then we  use the same method as \cite[$\S$ 4.5]{M-V19} since we have obtained the extension sequence $\{U_j\}$ naturally satisfying the current equations \cite[(16)(17)]{M-V19} of the ambient extension. For the sake of completeness, we show the specific  procedure (see \cite[p. 439, $\S$ 4.5]{M-V19} for more explanations).

 As $U_{j}$ is an ambient extension of $f$, it satisfies  the distribution equations
\begin{equation}\label{current1new}
U_j \wedge \frac{\sqrt{-1}}{2\pi} \partial \bar{\partial} \log \left|s\right|^{2}=f \wedge \frac{\sqrt{-1}}{2\pi} \partial \bar{\partial} \log \left|s\right|^{2}
\end{equation}
and
\begin{equation}\label{current2new}
(\frac{\partial}{\partial\bar{s} }\lrcorner U_j) \wedge \frac{\sqrt{-1}}{2\pi} \partial \bar{\partial} \log \left|s\right|^{2}=(\frac{\partial}{\partial\bar{s} }\lrcorner f) \wedge \frac{\sqrt{-1}}{2\pi} \partial \bar{\partial} \log \left|s\right|^{2},
\end{equation}
which have been derived in \cite[(12)(13)]{M-V19} due
essentially to the idea that we can grasp the data along $Y$ of any continuous bundle-valued form by taking wedge with the current of integration $[Y]$  since $\operatorname{Supp}[Y]=Y$.
Note that $s$ here is considered as the holomorphic function  coefficient with respect to some corresponding local frame of $E$. Of course, $f$ is only defined on $Y$. However, we can extend it smoothly in an arbitrary way to the object of the same type on $\Omega_j$. Then the support of $[Y]$ forces the above current equation  to be  well defined on $\Omega_j$ which does not depend on the choice of the  extension of $f$.

According to \eqref{U_j}, Alaoglu's Theorem shows that (possibly a subsequence of) $\left\{U_{j}\right\}$ converges weakly to some $U$ on $X$ with the $L^{2}$ estimate
\begin{equation}\label{U}
\int_{X}|U|^{2}_{\omega} e^{-\varphi} d V_{\omega}\leq \lim\limits \inf ||U_j||_{L^2} \leq \frac{C}{\delta} \int_{Y} \frac{|f|_{\omega}^{2} e^{-\varphi}}{\left|d s\right|^{2}_{\omega} e^{-\lambda}} dV_{Y,\omega}.
\end{equation}
Then $\square_{0} U=0$ due to that $\square_{0} U_j=0$,  and thus $U$ is smooth.

Now we claim that $U$ is our desired extension, i.e.,  $U$ satisfies the distribution equations \eqref{current1new} and \eqref{current2new}.
Note that
$$
U_j \wedge \frac{\sqrt{-1}}{2\pi} \partial \bar{\partial} \log \left|s\right|^{2}=-\dbar(U_j \wedge \frac{\sqrt{-1}}{2\pi} \frac{ds}{s})
$$
and
$$
(\frac{\partial}{\partial\bar{s} }\lrcorner U_j) \wedge \frac{\sqrt{-1}}{2\pi} \partial \bar{\partial} \log \left|s\right|^{2}=
\pm\left(\dbar(\frac{\partial}{\partial\bar{s}}\lrcorner U_j)\wedge\frac{ds}{s}-\dbar((\frac{\partial}{\partial\bar{s}}\lrcorner U_j)\wedge\frac{ds}{s})\right).
$$
Then  it suffices to show that
\begin{equation}\label{locally integrable}
U_j,\quad    \dbar(\frac{\partial}{\partial\bar{s}}\lrcorner U_j)   \quad \text{and}\quad   \frac{\partial}{\partial\bar{s}}\lrcorner U_j
\end{equation}
%\begin{equation}\label{locally integrable}
%U_j \wedge \frac{\sqrt{-1}}{2\pi} \frac{ds}{s},\quad   (\frac{\partial}{\partial\bar{s}}\lrcorner U_j)\wedge\frac{ds}{s}\quad \text{and}\quad \dbar(\frac{\partial}{\partial\bar{s}}\lrcorner U_j)\wedge\frac{ds}{s}
%\end{equation}
are locally  uniformly bounded in $j$, for  showing that $U$ satisfies equations \eqref{current1new} and \eqref{current2new}, since $\frac{ds}{s}$ is locally integrable. In fact, take \eqref{current1new} for example, in the spirit of the argument (e.g.,  \cite[p. 383]{Dm12}) of Lemma \ref{extend across}, noting that $s^{-1}$ multiplied by any smooth function which is  compactly supported outside $Y$ is  smoothly compactly supported on $X$, we can   imply the convergence of $\{U_j \wedge \frac{\sqrt{-1}}{2\pi} \frac{ds}{s}\}$ to the current $U \wedge \frac{\sqrt{-1}}{2\pi} \frac{ds}{s}$ due to the  locally uniformly boundedness of  $U_j$.  Then the weak continuity of $\dbar$ helps us to get  \eqref{current1new}.

%G\aa rding's inequality
 The ellipticity of $\square_{0}$,  basic elliptic estimate (e.g.,  \cite[Theorem A.3.2]{MM07}),  Sobolev embedding theorem (e.g.,  \cite[Theorem A.3.1-(b)]{MM07}) and  \eqref{U_j}  together tell us that for every compact set $K$, there exists some $j_{0}=j_{0}(K)>0$ and $C_{K}>0$ such that
$$
\left\|\left|U_{j}\right| e^{-\varphi / 2}\right\|_{\mathscr{C}^1(K)} \leq C_{K}
$$
for all $j \geq j_{0} .$ The index $j_{0}$ is large enough to make sure that $U_{j}$ is defined
on $\Omega_{j}$.
%It follows that for any compact subset $K \subset \subset X$ and $j \geq j_{0}$,
%$$
%\int_{K}\left|U_{j} \wedge \frac{d s}{s}\right| e^{-\varphi / 2} d V_{\omega} \leq C_{K} \int_{K}\left|\frac{d s}{s}\right| d V_{\omega},
%$$
%and thus the $L^{1}$-norm of $U_{j} \wedge \frac{d s}{s}$ on $K$ is bounded by a constant independent
%of $j$.
The similar argument shows that the other terms in \eqref{locally integrable}  are locally uniformly bounded.  Now we can take limits in the distribution equations \eqref{current1new} and \eqref{current2new} to conclude that $U$ also satisfies these equations. Thus $U$ is our desired extension with the estimate \eqref{U}.

\rem\label{tend 0}Note that we can make a better control on our extension $U$. In fact, from \eqref{boundedness}, Theorem \ref{solution} and \eqref{universal C},  we know that the universal constant $C$ in \eqref{u_j} and \eqref{U} depends on $c$. If we take a smaller $c$ in \eqref{two part}, we can get a smaller $C_{\vep,j,c}$ in \eqref{boundedness} according to \eqref{first part},\eqref{limit} and \eqref{second part}. Then we can get a smaller universal constant $C$ in \eqref{universal C} and thereby a better control in \eqref{U} for our extension $U$. However, the constant in \eqref{U} is not a sharp one and then taking a smaller $c$ may not give more useful information.

\appendix
\section{Alternative proof of Theorem \ref{intrinsic extension}}\label{app}
Theorem \ref{ambient extension} can easily imply Theorem \ref{intrinsic extension} as stated in Section \ref{intro}. Now we  present  a sketch of a direct proof of Theorem \ref{intrinsic extension}.

Just as in Subsection \ref{sect0.1}, we can glue local extensions from
 Theorem \ref{mv Intrinsic} and use the same method of raising the regularity of $s^{-1}\dbar \widetilde{f}_{\infty}$, to obtain:
\begin{prop}[{\cite[Lemma 3.1]{Koz11}}]\label{tilde{f}-2}
For any $k\geq 0$, there exists a smooth section
$$\widetilde f_\infty\in \mathcal E^\infty(X,\Lambda^{n,q}T_X^\star\otimes L)$$
such that
\begin{enumerate}[{\rm (}a{\rm )}]
\item $\widetilde f_\infty$ is the intrinsic extension of $u$,
\item $|\widetilde f_\infty|_{\omega,L}=|u|_{\omega,L}$ at every point of $Y$,
\item $\bar\partial\widetilde f_\infty=0$ at every point of $Y$,
\item $s^{-1} \bar\partial\widetilde f_\infty\in \mathcal {E}^k(X,\Lambda^{n,q+1}T^\star_ X \otimes L\otimes \mathcal{O}_{X}(-Y))$.
\end{enumerate}
\end{prop}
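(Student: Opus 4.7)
The plan is to mimic the construction of Proposition \ref{tilde{f}} in Subsection \ref{sect0.1}, substituting Theorem \ref{mv Intrinsic} for Theorem \ref{mv Ambient} at the local step, and adding a small normalization to ensure the pointwise norm equality (b). First I would cover $X$ by Stein coordinate polydiscs $\{W_\alpha, (z_\alpha, w_\alpha)\}$ with $W_\alpha \cap Y = \{z_\alpha = 0\}$ and fix a subordinate partition of unity $\{\theta_\alpha\}$. Since each $W_\alpha$ is Stein and the curvature assumptions of Theorem \ref{intrinsic extension} restrict to the hypotheses of Theorem \ref{mv Intrinsic}, the latter produces on each $W_\alpha$ a smooth $\dbar$-closed $L$-valued $(n,q)$-form $\Pi_\alpha$ with $\iota^* \Pi_\alpha = u$ on $Y \cap W_\alpha$.

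Next, using the isometric lift $P^*$ described just before Theorem \ref{intrinsic extension}, I would adjust each $\Pi_\alpha$ so that it coincides with $P^* u$ pointwise on $Y \cap W_\alpha$ as an ambient form. The difference $\Pi_\alpha - P^* u$ (with $P^* u$ extended arbitrarily to $W_\alpha$) has zero intrinsic restriction along $Y \cap W_\alpha$, so it is locally of the form $d \bar z_\alpha \wedge \eta_\alpha + z_\alpha (\cdot) + \bar z_\alpha (\cdot)$; subtracting a smooth extension of the $d\bar z_\alpha \wedge \eta_\alpha|_Y$ piece yields a new $\Pi_\alpha$ with $\Pi_\alpha|_Y = P^* u$ and $\iota^* \Pi_\alpha = u$ unchanged. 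Setting $\wt f_\infty^{(0)} := \sum_\alpha \theta_\alpha \Pi_\alpha$, the usual telescoping identity $\dbar \wt f_\infty^{(0)}|_{W_\beta} = \sum_\alpha \dbar\theta_\alpha \wedge (\Pi_\alpha - \Pi_\beta)$ shows $\dbar \wt f_\infty^{(0)}$ vanishes along $Y$, giving (a) and (c); property (b) follows because $\wt f_\infty^{(0)}|_Y = \sum_\alpha \theta_\alpha P^* u = P^* u$ and $P^*$ is a pointwise isometry.

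Finally I would upgrade the regularity of $s^{-1} \dbar \wt f_\infty$ by invoking Koziarz's inductive procedure from Subsection \ref{sect0.1}: expanding $\dbar \wt f_\infty$ in powers of $\bar z_\alpha$ on each chart, the identity $\dbar^2 = 0$ forces the middle coefficients $b_{I'}$ in \eqref{lifting} to vanish, and subtracting the explicit correction $\sum_\alpha \theta_\alpha \frac{\bar z_\alpha^{j+1}}{j+1} \sum_I a_I(w_\alpha) \sigma_\alpha d\bar w_\alpha^I$ jumps the current property $(P_j)$ to $(P_{j+1})$. Iterating $k+2$ times places $s^{-1} \dbar \wt f_\infty$ in $\mathcal{E}^k$, yielding (d); since each correction carries a factor $\bar z_\alpha^{j+1}$ with $j \geq 1$, it vanishes on $Y$ and preserves (a), (b), (c). The main point demanding care is the compatibility of the pointwise-norm normalization with the inductive regularity lifting, but this is automatic: the normalization only alters values in the ambient $d\bar z_\alpha$ direction on $Y$, which is unconstrained by the intrinsic datum, and Koziarz's corrections, being multiples of $\bar z_\alpha^{j+1}$, leave the value of $\wt f_\infty$ along $Y$ entirely unchanged.
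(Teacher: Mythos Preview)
Your overall strategy---local extension, gluing by a partition of unity, then Koziarz's inductive regularity lifting---is exactly the paper's one-sentence sketch, which says to proceed ``just as in Subsection \ref{sect0.1}'' using Theorem \ref{mv Intrinsic} and cites \cite{Koz11} for the details. However, the normalization you insert to secure (b) creates a genuine gap in (c).

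Once you subtract from each $\Pi_\alpha$ a smooth extension $\xi_\alpha$ of the $d\bar z_\alpha \wedge \eta_\alpha|_Y$ piece, the modified $\Pi_\alpha$ is \emph{no longer $\dbar$-closed}: $\dbar\Pi_\alpha^{\mathrm{new}}=-\dbar\xi_\alpha$. The telescoping identity you then invoke, $\dbar \wt f_\infty^{(0)}|_{W_\beta} = \sum_\alpha \dbar\theta_\alpha \wedge (\Pi_\alpha - \Pi_\beta)$, presupposes $\dbar\Pi_\alpha=0$; after your modification it acquires the extra term $-\sum_\alpha \theta_\alpha\,\dbar\xi_\alpha$. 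For an arbitrary smooth extension $\xi_\alpha$ of a form of type $d\bar z_\alpha\wedge\eta_\alpha(w_\alpha)$ one has $\dbar\xi_\alpha|_Y=-d\bar z_\alpha\wedge(\dbar_w\eta_\alpha)$, which has no reason to vanish. Thus (c) is not established, and since Koziarz's induction for (d) starts precisely from property $(P_1)$, i.e.\ from (c), the gap propagates to (d) as well.

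The clean repair is to skip the normalization: on each $W_\alpha$ apply Theorem \ref{mv Ambient} (rather than Theorem \ref{mv Intrinsic}) to the ambient datum $f:=P^{*}u$. This produces $\dbar$-closed local pieces $\Pi_\alpha$ with $\Pi_\alpha|_Y=P^{*}u$ from the start; then $\iota^{*}P^{*}u=u$ and the pointwise isometry of $P^{*}$ give (a) and (b), the unaltered telescoping computation of Subsection \ref{sect0.1} gives (c), and Koziarz's lifting gives (d) verbatim. Equivalently, just invoke Proposition \ref{tilde{f}} with $f=P^{*}u$ and read off all four conclusions at once.
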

From now on we fix $k\geq n+6$, where $n$ is the complex dimension of $X$.

Using exactly the same constructions and calculations as in Subsections  \ref{sub-3.2}-\ref{sub-3.4}, for each $j$, we
can obtain an $L\otimes K_X$-valued $(0, q)$-form $u_{j}$ of class $\mathcal{E}^{k_1}$ on $\Omega_{j}$ such that
$$\label{u_j-2}
\bar\partial u_{j}=0,\quad \iota^*u_{j}=u, \quad \text { and } \quad \int_{\Omega_{j}}\left|u_{j}\right|^{2}_{\omega} e^{-\varphi} d V_{\omega} \leq \frac{C}{\delta} \int_{Y} \frac{|u|_{\omega}^{2} e^{-\varphi}}{\left|d s\right|^{2}_{\omega} e^{-\lambda}} dV_{Y,\omega}
$$
with $k_1\geq 5$.
In particular, the right-hand side is independent of $j$.

Let $\mathscr{V}_{q}^{2}\left(\Omega_{j}\right)$ denote the Hilbert space closure of the set
of all  smooth $K_X\otimes L\otimes E^*$-valued $\dbar$-closed $(0, q)$-forms $\beta$ on $\Omega_{j}$ satisfying
$$
\int_{\Omega_{j}}|\beta|_{\omega}^{2} e^{-\varphi+\lambda} d V_{\omega}<+\infty,
$$
and then  $\mathscr{V}_{q}^{2}\left(\Omega_{j}\right)$ consists precisely of all those forms in $L^{2}\left(\omega, e^{-\varphi+\lambda}\right)$ that are $\bar{\partial}$-closed in the weak sense. Let $\mathscr{B}_{q}^{2}\left(\Omega_{j}\right)$ denote the closed unit ball in $\mathscr{V}_{q}^{2}\left(\Omega_{j}\right)$, i.e.,
$$
\beta \in \mathscr{B}_{q}^{2}\left(\Omega_{j}\right) \Longleftrightarrow \beta \in \mathscr{V}_{q}^{2}\left(\Omega_{j}\right) \text { and } \int_{\Omega_{j}}|\beta|_{\omega}^{2} e^{-\varphi+\lambda} dV_{\omega} \leq 1.
$$
Let us define the affine ball
$$
\mathscr{B}_{j}:=u_{j}+s \mathscr{B}_{q}^{2}\left(\Omega_{j}\right):=\left\{u_{j}+s \beta; \beta \in \mathscr{B}_{q}^{2}\left(\Omega_{j}\right)\right\} \subset L^{2}\left(\omega, e^{-\varphi}\right).
$$

Then we have  the  following proposition.
\begin{prop}\label{is intrinsic extension}
Every continuous form in $\mathscr{B}_{j}$ is an intrinsic extension of $u$.
\end{prop}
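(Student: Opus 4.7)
The plan is to mimic verbatim the proof of Proposition \ref{is ambient extension}, replacing ``ambient restriction'' by the intrinsic pullback $\iota^*$. The statement to check is that any $g \in \mathscr{B}_j$ that admits a truly continuous representative satisfies $\iota^* g = u$ pointwise on $Y \cap \Omega_j$. Writing such a representative as $g = u_j + s\beta$ with $\beta \in \mathscr{B}_q^2(\Omega_j)$ and recalling that $\iota^* u_j = u$ by the construction preceding this proposition, everything reduces to showing $\iota^*(s\beta) = 0$ on $Y \cap \Omega_j$.

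First I would fix $x \in Y \cap \Omega_j$ and choose a local chart $(U; z_1,\dots,z_n)$ with $Y \cap U = \{z_1 = 0\}$, together with local holomorphic frames $\sigma$ of $K_X \otimes L$ and $\theta$ of $E$ arranged so that $s = z_1 \otimes \theta$. In this chart write
\[
\beta|_U = \sum_{|K|=q} \lambda_K \, d\bar z_K \otimes \sigma \otimes \theta^{*},
\]
so that $s\beta|_U = z_1 \sum_K \lambda_K \, d\bar z_K \otimes \sigma$. Because $\iota^* d\bar z_1 = 0$ on $Y = \{z_1 = 0\}$, the intrinsic pullback annihilates every multi-index $K$ containing $1$; only the tangential part with $1 \notin K$ survives, giving
\[
\iota^*(s\beta)\bigl(0,z'\bigr) \;=\; \sum_{1 \notin K} \bigl(z_1 \lambda_K\bigr)\bigl(0,z'\bigr)\, \iota^* d\bar z_K \otimes \sigma.
\]
Hence it suffices to verify $\lim_{z_1 \to 0} z_1 \lambda_K(z_1,z') = 0$ for every increasing multi-index $K$ with $1 \notin K$ and every $z'$ in a neighbourhood of $x$.

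This last step is precisely the Fubini-plus-Lemma-\ref{integrability} argument from Proposition \ref{is ambient extension}. Since $\beta \in \mathscr{B}_q^2(\Omega_j)$, each $\lambda_K$ is $L^2$ on $U$ (after possibly shrinking $U$). By the integrability half of Fubini, $|\lambda_K(\cdot,z')|^2$ is locally $L^1$ in $z_1$ for almost every $z'$; Lemma \ref{integrability} then produces a sequence $z_{1,\nu} \to 0$ with $\lambda_K(z_{1,\nu},z') = o(|z_{1,\nu}|^{-1})$ for such $z'$, so that $z_1 \lambda_K \to 0$ along this sequence a.e. in $z'$. The function $z_1 \lambda_K = (g - u_j)_K / \text{(frame coefficient)}$ is continuous on $U$ (because $g$ is continuous and $u_j$ is smooth), so the a.e.\ vanishing of the limit promotes to vanishing everywhere on $Y \cap U$.

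I do not foresee a serious obstacle: the one subtlety is the bookkeeping that separates the \emph{intrinsic tangential} components ($1 \notin K$) from the normal ones ($1 \in K$), the latter being automatically killed by $\iota^*$ and therefore requiring no estimate. The rest is a direct transcription of the Fubini/\,Lemma \ref{integrability} trick already established in the ambient case, so the proof will look essentially identical to the proof of Proposition \ref{is ambient extension}.
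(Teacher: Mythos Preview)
Your proposal is correct and follows essentially the same route as the paper: the paper likewise reduces to a local chart with $s=z_1\otimes\theta$, splits $\beta$ into the part containing $d\bar z_1$ (killed by $\iota^*$) and the tangential part $\sum_{1\notin K}\lambda^{(2)}_K d\bar z_K$, and then invokes the identical Fubini/Lemma~\ref{integrability}/continuity argument from Proposition~\ref{is ambient extension} to show $\lim_{z_1\to 0} z_1\lambda^{(2)}_K=0$. Your $\lambda_K$ for $1\notin K$ is precisely the paper's $\lambda^{(2)}_K$, so the two proofs coincide.
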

\begin{proof} The proof is quite similar with that of Proposition \ref{is ambient extension}. It suffices to prove that $\iota^*(u_{j}+s\beta)(x)=u(x)$ for any $x\in\Omega_j\cap Y$ and any continuous $u_{j}+s\beta$ in $\mathscr{B}_{j}$. For any fixed  $x\in\Omega_j\cap Y$, take any  local coordinate  $(U, z_1,z_2,\ldots,z_n)$ of $X$ around $x$. Fix  holomorphic local frames $\sigma$ of $K_X\otimes L$
and $\theta$ of $E$ over $U$, respectively, such that $s=z_1\otimes\theta$. Note that
$$Y\cap U=\{z_1=0\}, \quad \beta|_U=\sum_{|J|=q-1,1\notin J}\lambda^{(1)}_{J}d\bar{z}_1\wedge d\bar{z}_J\otimes \sigma\otimes\theta^*+\sum_{|K|=q,1\notin K}\lambda^{(2)}_{K}d\bar{z}_K\otimes \sigma\otimes\theta^*,$$
where the multi-indices $J$ and $K$ are increasing.

Then
$$\iota^*\beta=\sum_{|K|=q,1\notin K}\lambda^{(2)}_{K}(0,z_2,\ldots,z_n)d\bar{z}_K\otimes \sigma\circ\iota\otimes\theta^*\circ\iota.$$
So it suffices to prove
$$\lim\limits _{z_1 \rightarrow 0} z_1 \lambda^{(2)}_{K}(z_1,z_2,\ldots,z_n)= 0 \quad\text{for any multi-index $K$},$$
since $u_j$ is the intrinsic extension of $u$.
The leftover argument proceeds just with $\lambda_{K}(z_1,z_2,\ldots,z_n)$ in the proof of  of Proposition \ref{is ambient extension} replaced by $\lambda^{(2)}_{K}(z_1,z_2,\ldots,z_n)$ here.
\end{proof}

\rem The current equation method in the proof of  \cite[Proposition 4.4]{M-V19} can also work to prove Proposition \ref{is intrinsic extension}. In fact, the local expressions of $\beta$ and $\iota^*\beta$ in the above proof can be used to deduce a current equation  characterizing
the intrinsic restriction due to that $\operatorname{Supp}[Y]=Y$. That is,
 a continuous $K_X\otimes L$-valued $(0, q)$-form $U$  on $X$ is the intrinsic extension of a smooth  $K_X\otimes L|_Y$-valued  $(0,q)$-form $u$ on $Y$ if and only if locally
\begin{equation}\label{intrinsic current}
U \wedge \sqrt{-1} \partial \bar{\partial} \log \left|s\right|^{2}=u \wedge \sqrt{-1} \partial \bar{\partial} \log \left|s\right|^{2}
\end{equation}
as $K_X\otimes L$-valued $(1, q+1)$-currents of order $0$. Here we adopt the definition of order of currents in \cite[$\S$\ 2 of Chapter 1]{Dm12}.  Note that $|s|$ here is considered as the holomorphic function with respect to the corresponding local trivialization. Despite $u$ is only defined on $Y$, the above current equation  is   well defined on $X$ due to the same reason as \eqref{current1new} and \eqref{current2new}.  The proof of \eqref{intrinsic current} is similar to  the proof of \cite[(12),(13)]{M-V19}  characterizing the ambient restriction. Then we can  also use this characterization \eqref{intrinsic current} to verify  Proposition \ref{is intrinsic extension} as the  proof of \cite[Proposition 4.4]{M-V19}.

Using Proposition \ref{is intrinsic extension} and the similar results to  Propositions \ref{minimal ortho} and \ref{dbarstar=0},
we can obtain an element $U_j$ of minimal norm in $\mathscr{B}_{j}$  satisfying $\square_{0} U_{j}=0$. Then $U_j$ is smooth and thereby the intrinsic extension of $u$ on $\Omega_j$ by Proposition \ref{is intrinsic extension}. Of course,  $U_j$ satisfies \eqref{intrinsic current}. At last, by the same method of extracting weak limits of some subsequence of  ${U_j}$ as $j\to\infty$ as in Subsection \ref{sub-3.6}, we can obtain our desired extension.

\section*{Acknowledgement}
Both authors would like to thank Professor  Dror Varolin,  for explaining us the details on their article \cite{M-V19}, and pointing out an important mistake in our draft, and giving many useful suggestions, especially on Examples \ref{ex1} and \ref{ex2}. Furthermore, we also thank Professors  Bo Berndtsson, Vincent Koziarz  for explaining us details on their articles \cite{B12,Koz11}, respectively, and Professor Langfeng Zhu for his help in learning  $L^2$  theory, and Zheng Yuan for pointing out an inaccuracy,  and also Professors Chen-Yu Chi and J.-P. Demailly for their two discussions during the second author's visit to Institut Fourier, Universit\'{e} Grenoble Alpes in April 2019 on this topic of extension theorem.  Moreover, the first author would like to thank Runze Zhang for many  discussions on $L^2$ extension theory, as well
as Yongpan Zou and Houwang Liu  for many discussions  on a seminar about $L^2$ methods.

\end{document}